\newtheorem{thm}{Theorem}[section]
\newtheorem{lem}[thm]{Lemma}
\newtheorem{cor}[thm]{Corollary}
\newtheorem{prop}[thm]{Proposition}
\theoremstyle{definition}
\newtheorem{defn}[thm]{Definition}
\theoremstyle{remark}  
\newtheorem{rem}[thm]{Remark}
\newtheorem*{rem*}{Remark}
\newtheorem{step}{Step}
\newtheorem*{warn}{Warning}
\numberwithin{equation}{section}
\newcommand{\thmref}[1]{Theorem~\textup{\ref{#1}}}
\newcommand{\corref}[1]{Corollary~\textup{\ref{#1}}}
\newcommand{\lemref}[1]{Lemma~\textup{\ref{#1}}}
\newcommand{\propref}[1]{Proposition~\textup{\ref{#1}}}
\newcommand{\midtext}[1]{\quad\text{#1}\quad}
\newcommand{\righttext}[1]{\quad\text{#1 }}
\renewcommand{\and}{\midtext{and}}
\newcommand{\all}{\righttext{for all}}
\renewcommand{\(}{\textup(}
\renewcommand{\)}{\textup)}
\newcommand\bbfont{\mathbf}
\newcommand\groupoidfont{\mathcal}
\newcommand\bundlefont{\mathscr}
\newcommand{\C}{\bbfont C}
\newcommand{\KK}{\groupoidfont K}
\newcommand{\EE}{\bundlefont E}
\newcommand{\BB}{\bundlefont B}
\newcommand{\XX}{\groupoidfont X}
\newcommand{\YY}{\groupoidfont Y}
\renewcommand{\AA}{\bundlefont A}
\DeclareMathOperator{\aut}{Aut}
\DeclareMathOperator{\ad}{Ad}
\DeclareMathOperator{\ind}{Ind}
\newcommand{\id}{\text{\textup{id}}}
\newcommand{\<}{\langle}
\renewcommand{\>}{\rangle}
\newcommand{\under}{\backslash}
\newcommand{\inv}{^{-1}}
\renewcommand{\bar}{\overline}
\newcommand{\lt}{\textup{lt}}
\newcommand{\cs}%
{\ensuremath{\mathbf{C^*}}}
\definecolor{Dgreen}{cmyk}{0.93,0.33,0.92,0.25} 
\begin{document}
\title{Fell Bundles and Imprimitivity Theorems}

\author[Kaliszewski]{S. Kaliszewski}
\address{Department of Mathematics and Statistics
\\Arizona State University
\\Tempe, Arizona 85287}
\email{kaliszewski@asu.edu}

\author[Muhly]{Paul S. Muhly}
\address{Department of Mathematics
\\The University of Iowa
\\Iowa City, IA 52242}
\email{paul-muhly@uiowa.edu}

\author[Quigg]{John Quigg}
\address{Department of Mathematics and Statistics
\\Arizona State University
\\Tempe, Arizona 85287}
\email{quigg@asu.edu}

\author[Williams]{Dana P. Williams}
\address{Department of Mathematics
\\Dartmouth College
\\Hanover, NH 03755}
\email{dana.williams@dartmouth.edu}


\date{June 25, 2012}

\subjclass[2010]{Primary 46L55; Secondary 46M15, 18A25}

\keywords{imprimitivity theorem, Fell bundle, groupoid}

\begin{abstract}
  Our goal in this paper and two sequels is to apply the
  Yamagami-Muhly-Williams equivalence theorem for Fell bundles over
  groupoids to recover and extend all known imprimitivity theorems involving
  groups.  Here we extend Raeburn's symmetric imprimitivity
  theorem, and also, in an appendix, we develop a number of tools for
  the theory of 
  Fell bundles that have not previously appeared in the literature.
\end{abstract}
\maketitle

\section{Introduction}
\label{intro}

In this paper, we take up in earnest the program that was suggested in
\cite{kmqw1} to use Fell bundles to unify and extend a broad range of
imprimitivity theorems and equivalence theorems for $C^*$-dynamical
systems, especially in settings that involve nonabelian duality. In a
sense, it has long been understood that Fell bundles provide an
important mechanism for illuminating the structure of $C^*$-dynamical
systems and their associated crossed products.  Fell invented Fell
bundles precisely to understand better and extend the theory of
induced representations that had been built up around Mackey's program
\cite{fel:ajm69, fel:mams69}. What is novel about our contribution is
the use of Fell bundles over \emph{groupoids}. Indeed, Fell bundles
over groupoids appear to be essential in important situations
involving groups acting and coacting on $C^*$-dynamical systems. For
example, as we showed in \cite{kmqw1}, if one has a Fell bundle over a
locally compact \emph{group}, $\AA \to G$, then there is a natural
coaction $\delta$ of $G$ on the $C^*$-algebra of the bundle,
$C^*(G,\AA )$. Further, the cocrossed product, $C^*(G,\AA )
\rtimes_{\delta} G$, is naturally isomorphic to a Fell bundle over the
transformation groupoid determined by the action of $G$ on $G$ through
left translation \cite[Theorem 5.1]{kmqw1}. This fact turns out to be
crucial for proving that the natural surjection from $C^*(G,\AA )
\rtimes_{\delta} G\rtimes_{\widehat{\delta}} G$ to $C^*(G,\AA )\otimes
\mathcal{K}(L^2(G))$, is an isomorphism \cite[Theorem 8.1]{kmqw1} ---
in other worlds, that $\delta$ is a maximal coaction. (Here
$\widehat{\delta}$ denotes the natural action of $G$ on $C^*(G,\AA )
\rtimes_{\delta}G$ that is dual to $\delta$, and $\mathcal{K}(L^2(G))$
denotes the compact operators on $L^2(G)$.) This same technique ---
using Fell bundles over groupoids to prove that dual coactions
on full cross-sectional $C^*$-algebras of Fell bundles over groups are 
maximal --- was used in \cite{ech} for discrete groups.

This is the first of three papers that are dedicated to showing how
all known imprimitivity theorems can be unified and extended under the
umbrella of Fell bundles over groupoids. The notion of a system of
imprimitivity, and the first imprimitivity theorem, appeared very
early in the theory of groups. As discussed in \cite[page 64
ff]{Hall1959}, if a group $G$ acts on a set $X$, then a system of
imprimitivity is simply a partition $\mathcal{P}$ of $X$ that is
invariant under the action of $G$, in the sense that for each $P \in
\mathcal{P}$ and for each $x\in G$, $Px$ is another element of
$\mathcal{P}$. The system is called transitive if $G$ permutes the
elements of $\mathcal{P}$ transitively. Suppose, in addition, that
each $P\in \mathcal{P}$ is a vector space and that the action of $G$
is linear in the sense that each $x\in G$ induces a linear map from $P
\in \mathcal{P}$ to $Px$. Then in a natural way, $G$ acts linearly on
the direct sum ${\bigoplus}_{P\in \mathcal{P}}P$, yielding a
representation of $G$, $U = \{U_x\}_{x\in G}$. Further, for each $P
\in \mathcal{P}$, the restriction $V$ of $U$ to the isotropy group $H$
of $P$ is a representation of $H$ on $P$ and $U$ is \emph{induced}
from $V$ \cite[Theorem 16.7.1]{Hall1959}. And conversely, if $H$ is
the isotropy group of any $P \in \mathcal{P}$ and if $V$ is a
representation of $H$ on $P$, then in a natural way $V$ induces a
representation of $G$ on ${\bigoplus}_{P\in \mathcal{P}}P$. These two
statements, taken together, are known as the imprimitivity
theorem. Thus, even in the setting of finite groups, one begins to see
the players that enter into our analysis: the elements of
$\mathcal{P}$ may be viewed as a bundle over $G/H$, where $H$ is the
subgroup of $G$ that fixes a particular $P\in \mathcal{P}$. The
representation space ${\bigoplus}_{P\in \mathcal{P}}P$ is the space of
all cross sections of this bundle. The representation $U$ of $G$ is
induced by an action of $H$. We may also think of ${\bigoplus}_{P\in
  \mathcal{P}}P$ as arising from a coaction of $G$, or better of
$G/H$.

Inspired by problems in quantum mechanics, Mackey discovered a
generalization of the imprimitivity theorem that is valid for
\emph{unitary} representations of locally compact groups in
\cite{mackey}. His theorem may be formulated as follows.  Suppose a
second countable locally compact group $G$ acts transitively and
measurably on a standard Borel space $X$, and suppose $H$ is the
isotropy group of a point in $X$. Then a unitary representation $U$ of
$G$ on a Hilbert space $\mathcal{H}$ is induced from a unitary
representation of $H$ if and only if there is a spectral measure $E$
defined on the Borel sets of $X$ with values in the projections on
$\mathcal{H}$ such that for every Borel set $M$ in $X$ and every $g
\in G$,\begin{equation} \label{covariance} E(Mg) = U_{g}^{-1} E(M) U_g
\end{equation}

The connection with the imprimitivity theorem for finite groups
becomes clear once one uses direct integral theory (applied to the
spectral measure $E$) to decompose $\mathcal{H}$ as the direct
integral of a bundle $\mathfrak{H} = \{H_x\}_{x\in X}$ of Hilbert
spaces over $X$. The representation $U$ permutes the fibres $H_x$
transitively and is induced in the fashion indicated above from a
representation of the isotropy group of any point in $X$.  Of course,
there are many technical difficulties to surmount when one works at
this level of generality, but the idea is clear.

The contemporary view of Mackey's theorem is due to Rieffel
\cite{rie:induced}.  He was motivated not only by Mackey's theorem,
but also by other generalizations of it.  In particular, he received
considerable inspiration from Takesaki's paper \cite{takesaki} and
from the work of Fell \cite{fel:mams69}.

To understand Rieffel's perspective, observe first that if a second
countable locally compact group acts measurably and transitively on a
standard Borel space $X$ and if $H$ is the isotropy group of some
point in $X$, then $X$ is Borel isomorphic to the coset space
$G/H$. Further, specifying a spectral measure on $X$ is tantamount to
specifying a $C^*$-representation, say $\pi$, of $C_{0}(G/H)$. The
spectral measure satisfies the covariance equation \eqref{covariance}
relative to some unitary representation $U$ of $G$ if and only if $U$
and $\pi$, satisfy the following covariance equation
\begin{equation}\label{covariance_bis}
  \pi(f_g)= U^{-1}_g \pi(f) U_g
\end{equation}
for all $f\in C_0(G/H)$ and all $g\in G$.  This equation, in turn,
means that the pair $(\pi, U)$ can be \emph{integrated} to a
$C^*$-representation of the crossed product $C^*$-algebra
$C_0(G/H)\rtimes G$. Further, the space $C_c(G)$ may be endowed with
(pre-) $C^*$-algebra-valued inner products in such a way that the
completion $\mathcal{X}$ becomes a \emph{ $(C_0(G/H)\rtimes G)-C^*(H)$
  imprimitivity bimodule}, also called a \emph{Morita equivalence
  bimodule}, that links $C_0(G/H)\rtimes G$ and $C^*(H)$. The group
$C^*$-algebra, $C^*(G)$, sits inside the multiplier algebra of
$C_0(G/H)\rtimes G$ and one concludes that a representation $\rho$ of
$C^*(G)$ is induced from a representation $\sigma$, say, of $C^*(H)$
via $\mathcal{X}$ if and only if $\rho$ can be extended in a natural
way to a $C^*$-representation of $C_0(G/H)\rtimes G$.

Thus, Rieffel observed that Mackey's imprimitivity theorem is a
special case of the following theorem:

\begin{thm}[{\cite[Theorem 6.29]{rie:induced}}]\label{Rieffel_Induced}
  Suppose $\pi$ is a representation of a $C^*$-algebra $A$ on a
  Hilbert space $H_{\pi}$. Suppose also that there is a representation
  of $A$ in the space, $\mathcal{L}(\mathcal{X}_B)$, of bounded,
  adjointable operators on the right Hilbert $C^*$-module
  $\mathcal{X}_B$ over the $C^*$-algebra $B$. Then $\pi$ is induced
  from a representation $\sigma$ of $B$ on a Hilbert space
  $H_{\sigma}$ in the sense of Rieffel \cite[Definition
  5.2]{rie:induced} if and only if $\pi$ can be extended to a
  representation of the compact operators,
  $\mathcal{K}(\mathcal{X}_B)$, on $\mathcal{X}_B$, in such a way that
  \[
  \pi(ak) = \pi(a)\pi(k)
  \]
  for all $a \in A$ and $k \in \mathcal{K}(\mathcal{X}_B)$.

\end{thm}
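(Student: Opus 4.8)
The plan is to deduce \thmref{Rieffel_Induced} from Rieffel's imprimitivity theorem \cite{rie:induced}, applied not to $\mathcal{X}_B$ regarded as an $A$--$B$ bimodule, but to $\mathcal{X}_B$ regarded as the canonical imprimitivity bimodule linking $\mathcal{K}(\mathcal{X}_B)$ with an ideal of $B$. Write $\phi\colon A\to\mathcal{L}(\mathcal{X}_B)=M(\mathcal{K}(\mathcal{X}_B))$ for the given homomorphism, and for $\xi,\eta\in\mathcal{X}_B$ let $\theta_{\xi,\eta}\in\mathcal{K}(\mathcal{X}_B)$ denote the operator $\zeta\mapsto\xi\langle\eta,\zeta\rangle_B$. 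As in \cite{rie:induced}, all representations below are nondegenerate; in particular the extension of $\pi$ to $\mathcal{K}(\mathcal{X}_B)$ is required to be nondegenerate, and this is exactly what makes the condition $\pi(ak)=\pi(a)\pi(k)$ equivalent to saying that $\pi$ is the composite of $\phi$ with that extension. (Nondegeneracy of the extension really is needed: when $A=B=\mathbb{C}$ and $\mathcal{X}_B=\ell^2$, the zero representation of $\mathcal{K}(\ell^2)$ satisfies the condition for every finite-dimensional $\pi$, and such a $\pi$ is not induced.) Recall, finally, that $\ind\sigma$ is the representation of $A$ on the internal tensor product $\mathcal{X}_B\otimes_B H_\sigma$ under which $a$ acts as $\phi(a)\otimes1$, and that $T\mapsto T\otimes1$ is a unital $*$-homomorphism $\mathcal{L}(\mathcal{X}_B)\to\mathcal{L}(\mathcal{X}_B\otimes_B H_\sigma)$.

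The forward implication is routine: if $\pi\cong\ind\sigma$, extend $\pi$ to $\mathcal{K}(\mathcal{X}_B)$ by $k\mapsto k\otimes1$. This is a representation because $T\mapsto T\otimes1$ is a $*$-homomorphism, it is nondegenerate because $\mathcal{K}(\mathcal{X}_B)\cdot\mathcal{X}_B$ is dense in $\mathcal{X}_B$, and $\pi(ak)=(\phi(a)k)\otimes1=(\phi(a)\otimes1)(k\otimes1)=\pi(a)\pi(k)$.

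For the converse, let $\rho$ be a nondegenerate representation of $\mathcal{K}(\mathcal{X}_B)$ on $H_\pi$ with $\rho(ak)=\pi(a)\rho(k)$ for all $a\in A$ and $k\in\mathcal{K}(\mathcal{X}_B)$; passing to the canonical extension $\bar\rho\colon\mathcal{L}(\mathcal{X}_B)\to\mathcal{B}(H_\pi)$ and using nondegeneracy of $\rho$, one obtains $\pi=\bar\rho\circ\phi$ on $A$. Put $B_0:=\overline{\operatorname{span}}\{\langle\xi,\eta\rangle_B:\xi,\eta\in\mathcal{X}_B\}$, an ideal of $B$, so that $\mathcal{X}_B$ becomes a $\mathcal{K}(\mathcal{X}_B)$--$B_0$ imprimitivity bimodule with $\mathcal{K}(\mathcal{X}_B)$-valued inner product $\theta_{\xi,\eta}$. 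Let $\sigma_0$ be the representation of $B_0$ on $H_\sigma:=\widetilde{\mathcal{X}}\otimes_{\mathcal{K}(\mathcal{X}_B)}H_\pi$ induced from $\rho$ via the conjugate module $\widetilde{\mathcal{X}}$ (whose elements are written $\overline{\eta}$, $\eta\in\mathcal{X}_B$), and let $\sigma$ be the unique nondegenerate representation of $B$ extending $\sigma_0$ (which exists because $B_0$ is an ideal of $B$). I claim $\ind\sigma\cong\pi$. First, since $\langle\xi,\eta\rangle_B\in B_0$ for all $\xi,\eta$ and $\sigma$ restricts to $\sigma_0$ on $B_0$, the natural map $\mathcal{X}_B\otimes_{B_0}H_\sigma\to\mathcal{X}_B\otimes_B H_\sigma$ is a unitary intertwining the two actions $\phi(a)\otimes1$; thus $\ind\sigma$ is unitarily equivalent to $A$ acting through $a\mapsto\phi(a)\otimes1$ on $\mathcal{X}_B\otimes_{B_0}(\widetilde{\mathcal{X}}\otimes_{\mathcal{K}(\mathcal{X}_B)}H_\pi)$. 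Next, Rieffel's imprimitivity theorem for $\mathcal{X}_B$ — equivalently, the $\mathcal{K}(\mathcal{X}_B)$-bimodule isomorphism $\mathcal{X}_B\otimes_{B_0}\widetilde{\mathcal{X}}\cong\mathcal{K}(\mathcal{X}_B)$ — yields a unitary $V$ from $\mathcal{X}_B\otimes_{B_0}\widetilde{\mathcal{X}}\otimes_{\mathcal{K}(\mathcal{X}_B)}H_\pi$ onto $H_\pi$ with $V(\xi\otimes\overline{\eta}\otimes h)=\rho(\theta_{\xi,\eta})h$, which is isometric by that isomorphism and surjective because $\rho$ is nondegenerate. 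Finally, $V(\phi(a)\otimes1)(\xi\otimes\overline{\eta}\otimes h)=\rho(\theta_{\phi(a)\xi,\eta})h=\rho(\phi(a)\,\theta_{\xi,\eta})h=\bar\rho(\phi(a))\rho(\theta_{\xi,\eta})h=\bar\rho(\phi(a))\,V(\xi\otimes\overline{\eta}\otimes h)$, so $V$ intertwines $\phi(a)\otimes1$ with $\bar\rho(\phi(a))=\pi(a)$; hence $\ind\sigma\cong\pi$.

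I expect the main difficulty to be bookkeeping, not any single hard step. One must: (i) confirm that the hypothesis genuinely identifies $\pi$ with $\bar\rho\circ\phi$ for a \emph{nondegenerate} extension $\rho$ (degenerate extensions do not suffice); (ii) carry out the reduction from $B$ to the ideal $B_0$ with care, since $\mathcal{X}_B$ need be neither full over $B$ nor nondegenerate as a left $A$-module; and (iii) keep the left $A$-action — which enters through $\phi$ into $\mathcal{L}(\mathcal{X}_B)=M(\mathcal{K}(\mathcal{X}_B))$ — compatible with each internal tensor product in turn. The one genuinely substantive ingredient, namely that $\xi\otimes\overline{\eta}\otimes h\mapsto\rho(\theta_{\xi,\eta})h$ is isometric — equivalently, that induction via $\mathcal{X}_B$ and via $\widetilde{\mathcal{X}}$ are mutually inverse bijections between the nondegenerate representations of $\mathcal{K}(\mathcal{X}_B)$ and those of $B_0$ — is precisely Rieffel's imprimitivity theorem, and may simply be quoted.
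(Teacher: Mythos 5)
There is nothing in the paper to compare your argument against: \thmref{Rieffel_Induced} is quoted in the introduction from Rieffel (his Theorem~6.29) purely as motivation, and the paper gives no proof of it. Judged on its own terms, your blind proof is correct and is essentially the standard modern derivation of Rieffel's result: the forward direction via $k\mapsto k\otimes 1$ on $\mathcal{X}_B\otimes_B H_\sigma$; for the converse, factoring $\pi=\bar\rho\circ\phi$ through a nondegenerate representation $\rho$ of $\mathcal{K}(\mathcal{X}_B)$, passing to the ideal $B_0=\overline{\operatorname{span}}\langle\mathcal{X}_B,\mathcal{X}_B\rangle_B$ so that $\mathcal{X}_B$ becomes a $\mathcal{K}(\mathcal{X}_B)$--$B_0$ imprimitivity bimodule, and using the inverse-equivalence identity $\mathcal{X}_B\otimes_{B_0}\widetilde{\mathcal{X}}\cong\mathcal{K}(\mathcal{X}_B)$ to produce the unitary $\xi\otimes\bar\eta\otimes h\mapsto\rho(\theta_{\xi,\eta})h$ intertwining $\ind\sigma$ with $\pi$. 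Your insistence on nondegeneracy of the extension is also well taken: as literally stated the equivalence fails for degenerate extensions (your $\ell^2$ example is decisive), and nondegeneracy is indeed implicit in Rieffel's conventions; the only step you leave tacit is that $\sigma_0$, being induced from the nondegenerate $\rho$ through an imprimitivity bimodule, is itself nondegenerate, which is needed for the unique extension to $B$ and is standard. In short: a correct, self-contained proof of a statement the paper merely cites.
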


Rieffel's theorem opens a whole new dimension to representation
theory, not only of groups but of $C^*$-algebras, generally. One is
led ineluctably to look for situations when a given $C^*$-algebra $A$
may be represented in $\mathcal{L}(\mathcal{X}_B)$ for suitable
$C^*$-algebras $B$ and Hilbert $C^*$-modules $\mathcal{X}_B$. Such
searches are really searches for Morita contexts
$(\mathcal{K}(\mathcal{X}_B),\mathcal{X}_B, B)$ that reflect
properties of $A$. Thus, an imprimitivity theorem arises whenever one
finds an interesting Morita context. This point was made initially by
Rieffel in \cite{rie:induced} and was reinforced in his note
\cite{rie:applications}.  Subsequently, in \cite{gre:local}, Green
parlayed Rieffel's
\emph{Imprimitivity-cum-Morita-equivalence}-perspective into a
recovery of Takesaki's theorem in the form of a Morita equivalence
between $(A\otimes C_0(G/H))\rtimes G$ and $A\rtimes H$, where $A$ is
a $C^*$-algebra on which a locally compact group $G$ acts continuously
via $*$-automorphisms.  In the same paper, Green also proved another
imprimitivity theorem for induced actions. (We will have more to say
about induced actions below.)

With the advent of the important theory of nonabelian crossed product 
duality, involving coactions as well as actions of locally compact groups, 
a dual version of Green's theorem became exigent.
This was
supplied by Mansfield \cite{man}. It took the form of a Morita
equivalence between $A\rtimes_{\delta} G\rtimes_{\widehat{\delta}} N$
and $A\rtimes_{{\delta}|} G/N$ when $N$ is a normal subgroup of
$G$. (Here, $\delta$ is a coaction of the locally compact group $G$ on
the $C^*$-algebra $A$, $\widehat{\delta}$ is the dual action of $G$
determined by $\delta$, and ${\delta}|$ is the natural coaction of
$G/N$.) Mansfield's result was subsequently generalized to non-normal
$N$ by an~Huef and Raeburn \cite{HR:mansfield}. Their study, in turn,
led to the notion of a cocrossed product by a coaction of a
homogeneous space.

A few other imprimitivity theorems involving crossed products by group
actions have appeared: Combes \cite{com} showed that an equivariant
Morita equivalence gives rise to a Morita equivalence between the
crossed products. Raeburn's symmetric imprimitivity theorem
\cite{rae:symmetric} recovers, among other results, Green's theorem
for induced actions, the Green-Takesaki theorem for induced
representations of $C^*$-dynamical systems, and several of the
examples developed in \cite{rie:applications}.

Another general imprimitivity theorem that unifies various results,
starting with Mackey's, was given by Fell (see, for example,
\cite{fd2}), using what are now known as Fell bundles over locally
compact groups.  Introducing groupoids to the realm of imprimitivity
theorems, Muhly, Renault, and Williams \cite{mrw} showed how a certain
type of equivalence between locally compact groupoids gives rise to
Morita equivalence between their $C^*$-algebras.  In
\cite{ren:representation}, Renault combined groupoid equivalence with
groupoid crossed products, generalizing both Raeburn's symmetric
imprimitivity theorem and the Muhly-Renault-Williams groupoid algebra
imprimitivity theorem.

In an unpublished preprint \cite{yam:symmetric}, Yamagami stated a
very general imprimitivity theorem for Fell bundles over locally
compact groupoids (see also \cite{yam:memoir}, \cite{muhly:fell}), and
the complete details (in slightly greater generality) were worked out
by Muhly and Williams in \cite{mw:fell}. This imprimitivity theorem is
central to our considerations, as we indicated at the outset, and so
we will refer to this as the MWY theorem.

It is the MWY theorem that we will use to unify all known
imprimitivity theorems involving groups.  While an outline of what is
necessary is fairly clear, the details are formidable and require a
large amount of work. Due to their length, we will split this project
over several papers.  In this first paper, we will show how the MWY
theorem can be used to deduce Raeburn's symmetric imprimitivity
theorem, and thereby unify many imprimitivity theorems involving
crossed products by actions of locally compact groups.  To prepare the
way for Raeburn's theorem, we first prove a general imprimitivity
theorem, which we call the ``Symmetric Action theorem'', for commuting
group actions by automorphisms on a Fell bundle. The Symmetric Action
theorem will quickly imply not only Raeburn's theorem, but also
Mansfield's theorem (which we postpone to a subsequent paper), thus
giving a unified approach to the standard imprimitivity theorems for
actions \emph{and} for coactions.  (We could use the Symmetric Action
theorem to quickly derive the Green and the Green-Takesaki
imprimitivity theorems, but we leave details aside since Raeburn has
already shown how his symmetric imprimitivity theorem quickly implies
the Green and Green-Takesaki theorems.)  In subsequent papers, in
addition to Mansfield's imprimitivity theorem, we will illuminate a
curious connection between a one-sided version of the Symmetric Action
theorem and Rieffel's imprimitivity theorem for his
generalized-fixed-point algebras. And we shall deduce Fell's original
imprimitivity theorem from the MWY theorem.

To help with some of the technicalities that arise in our
constructions of Fell bundles, we have included in an appendix a
``toolkit for Fell bundle constructions''. In addition to playing an
important role in our present analysis, we believe it will prove
useful elsewhere.  The constructions we develop in the appendix
include semidirect products of Fell bundles (over groupoids) by
actions of locally compact groups, quotients of Fell bundles by free
and proper group actions, and a combination of these two that involve
commuting actions of two groups.  In addition, we give a structure
theorem that characterizes all free and proper actions by
automorphisms of a group on a Fell bundle. For this purpose, we employ
a result, which we believe is due to Palais, that shows that such
actions all arise from transformation Fell bundles (the theory of
which we developed in \cite{kmqw1}).

One final remark before getting down to business.  One may wonder if
all of our results have full groupoid variants.  That is, one may
wonder if we may replace all groups and group actions that we will be
discussing by groupoids and groupoid actions.  That may be possible
and the more optimistic ones among us believe that it is.  However,
the technical details appear to be more formidable than those we must
develop in this paper and its sequels.  We feel, therefore, that it
will be best to put them aside until the theory of group actions,
along the lines we conceive, are more fully exposed.

\section{Preliminaries}
\label{prelim}

We adopt the conventions of \cite{mw:fell, kmqw1} for Fell bundles
over locally compact groupoids. Whenever we refer to a space (in
particular, to a groupoid or a group), we tacitly assume that it is
locally compact, Hausdorff, and second countable. Whenever we refer to
a Banach bundle over a space (in particular, to a Fell bundle over a
groupoid or a group), we assume that it is upper semicontinuous
and separable --- as in
\cite{mw:fell, kmqw1}. We say that a Fell bundle is \emph{separable}
if the base groupoid is second countable and the Banach space of
continuous sections vanishing at infinity is
separable.  All groupoids will be assumed to be equipped with a
left Haar system.

Whenever we say a groupoid $\XX$ acts on the left of a space $X$, we
tacitly assume that the associated fibring map
\begin{equation*}
\rho:X\to \XX^{(0)}
\end{equation*}
is continuous and open, and that the action is continuous in the
appropriate sense.

If $p:\AA\to \XX$ is a Fell bundle over a locally compact groupoid, we
define $s,r:\AA\to \XX^{(0)}$ by
\begin{equation*}
s(a)=s(p(a))\midtext{and}r(a)=r(p(a)).
\end{equation*}
Similarly, if Fell bundles $\AA\to \XX$ and $\BB\to \YY$ act on the
left and right, respectively, of a Banach bundle $q:\EE\to \Omega$,
with respective fibring maps
\begin{equation*}
\xymatrix{ \XX^{(0)} &\Omega\ar[l]_-\rho \ar[r]^-\sigma &\YY^{(0)}, }
\end{equation*}
we define maps
\begin{equation*}
\xymatrix{ \XX^{(0)} &\EE\ar[l]_-\rho \ar[r]^-\sigma &\YY^{(0)} }
\end{equation*}
by
\begin{equation*}
\rho(e)=\rho(q(e))\midtext{and}\sigma(e)=\sigma(q(e)).
\end{equation*}


\section{The Symmetric Action theorem}
\label{symmetric}

In this section we derive from the Yamagami-Muhly-Williams equivalence
theorem the following general imprimitivity theorem that involves
commuting actions of groups on a Fell bundle.  This theorem will be
used to unify most (but not quite all) of the known imprimitivity
theorems we will derive.  For the background on actions of groups on
Fell bundles and the associated groupoid constructions, see
\S\ref{sec:transf-group-bundl}.

\begin{thm}
  \label{symmetric action}
  If locally compact groups $G$ and $H$ act freely and properly on the
  left and right, respectively, of a Fell bundle $p:\AA\to\XX$ over a
  locally compact groupoid, , and if the actions commute, then $\AA$
  becomes an $(\AA/H\rtimes G)-(H\ltimes G\under\AA)$ equivalence in
  the following way:
  \begin{compactenum}
  \item $\AA/H\rtimes G$ acts on the left of $\AA$ by
    \begin{equation*}
      (a\cdot H,t)\cdot b=a(t\cdot b) \quad\text{if}\quad s(a)=r(t\cdot b);
    \end{equation*}

  \item the left inner product is given by
    \begin{equation*} {}_L\<a,b\> =\bigl(a(t\cdot b^*)\cdot H,t\bigr)
      \quad\text{if}\quad G\cdot s(a)=G\cdot s(b),
    \end{equation*}
    where $t$ is the unique element of $G$ such that $ s(a)=t\cdot
    s(b)$;

  \item $H\ltimes G\under\AA$ acts on the right of $\AA$ by
    \begin{equation*}
      a\cdot (h,G\cdot b)=(a\cdot h)b \quad\text{if}\quad s(a\cdot h)=r(b);
    \end{equation*}

  \item the right inner product is given by
    \begin{equation*}
      \<a,b\>_R =\bigl(h,G\cdot (a^*\cdot h)b\bigr)
      \quad\text{if}\quad r(a)\cdot H=r(b)\cdot H,
    \end{equation*}
    where $h$ is the unique element of $H$ such that $ r(a)\cdot
    h=r(b)$.
  \end{compactenum}
\end{thm}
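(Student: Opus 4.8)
The plan is to realize $\XX$ as an equivalence of groupoids between $(\XX/H)\rtimes G$ and $H\ltimes(G\under\XX)$, and then to check that the Banach bundle $p\colon\AA\to\XX$, equipped with the four structure maps (i)--(iv), is an equivalence Fell bundle over this groupoid equivalence in the sense of \cite{mw:fell}; the assertion of the theorem is precisely that it satisfies the axioms on that list. When one subsequently wants the Morita equivalence of $C^*$-algebras that makes this an imprimitivity theorem, one reads it off from the MWY equivalence theorem \cite{mw:fell}, whose required Haar systems are those produced by the quotient and semidirect-product constructions of \S\ref{sec:transf-group-bundl}.

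First I would set up the groupoid side. Since the $G$- and $H$-actions on $\XX$ commute, the left $G$-action descends to the quotient groupoid $\XX/H$ and the right $H$-action to $G\under\XX$; the two transformation groupoids then have unit spaces $\XX^{(0)}/H$ and $G\under\XX^{(0)}$. I would take as moment maps $\XX\to\XX^{(0)}/H$, $x\mapsto r(x)\cdot H$, and $\XX\to G\under\XX^{(0)}$, $x\mapsto G\cdot s(x)$, and let $(\XX/H)\rtimes G$ act on the left and $H\ltimes(G\under\XX)$ on the right of $\XX$ by the formulas forced by (i) and (iii) at the level of the base groupoid --- each of which silently selects a coset representative that is made unique by freeness of the relevant action. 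That these groupoid actions are free and proper, commute, and have orbit maps equal to the two moment maps is the standard linking-groupoid picture for a pair of commuting free and proper actions, and is largely carried out in \S\ref{sec:transf-group-bundl}; this exhibits $\XX$ as a $\bigl((\XX/H)\rtimes G\bigr)$--$\bigl(H\ltimes(G\under\XX)\bigr)$ equivalence.

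Next I would verify the equivalence Fell bundle axioms for $\AA$. Well-definedness of (i)--(iv) is the same point as above: freeness pins down the representative selected in each formula (of $a\cdot H$ in (i)--(ii), of $G\cdot b$ in (iii)--(iv)) under the stated domain condition, and independence of the remaining choices is then automatic; continuity of the four maps is checked locally against continuous sections using the descriptions of the topologies on $\AA/H$, $G\under\AA$, and the semidirect-product bundles from \S\ref{sec:transf-group-bundl}. The algebraic axioms --- compatibility of the two module actions with the Fell-bundle multiplications on $\AA/H\rtimes G$ and $H\ltimes(G\under\AA)$ and with each other, conjugate symmetry ${}_L\<a,b\>^*={}_L\<b,a\>$ and $\<a,b\>_R^*=\<b,a\>_R$, the module-linearity relations ${}_L\<c\cdot a,b\>=c\cdot{}_L\<a,b\>$ and $\<a,b\cdot u\>_R=\<a,b\>_R\cdot u$, and above all the linking identity ${}_L\<a,b\>\cdot c=a\cdot\<b,c\>_R$ --- all reduce, after tracking the selected representatives, to associativity of Fell-bundle multiplication in $\AA$ together with commutativity of the two group actions. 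For positivity, note that in ${}_L\<a,a\>$ the unique $t$ with $s(a)=t\cdot s(a)$ is $t=e$ by freeness of $G$, so ${}_L\<a,a\>=(aa^*\cdot H,e)$, which is positive in $\AA/H\rtimes G$ and vanishes only when $a=0$; positivity of the right inner product is symmetric. Fullness follows because, for a unit $v$ of $\XX$ and any $x$ with $r(x)=v$, the fiber action makes $\AA_v\cdot\AA_x$ dense in $\AA_x$ (nondegeneracy of the left fiber module), which is already enough to recover every fiber of $\AA/H\rtimes G$ from inner products ${}_L\<a,b\>$, and symmetrically on the right.

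The hard part will be the linking identity ${}_L\<a,b\>\cdot c=a\cdot\<b,c\>_R$, in tandem with the continuity statements: both demand careful bookkeeping of the coset representatives chosen on the two sides before commutativity of the actions and associativity in $\AA$ can be brought to bear, and the continuity arguments additionally require a firm grip on the quotient- and semidirect-product-bundle topologies developed in the appendix. Everything else is a routine, if lengthy, unwinding of the definitions.
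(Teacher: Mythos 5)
Your proposal follows essentially the same route as the paper: first exhibit $\XX$ as a $(\XX/H\rtimes G)$--$(H\ltimes G\under\XX)$ groupoid equivalence (exploiting the $G$--$H$ symmetry and the appendix constructions), then verify the Fell-bundle equivalence axioms of \cite[Definition~6.1]{mw:fell} for the four structure maps, with the linking identity ${}_L\<a,b\>\cdot c=a\cdot\<b,c\>_R$ requiring exactly the representative bookkeeping you anticipate. The only real difference is cosmetic: where you argue positivity and fullness by hand, the paper dispatches the fibrewise condition in one stroke by observing that, under the obvious identifications, the operations coincide with the canonical $A(r(x))$--$A(s(x))$ imprimitivity-bimodule structure on $A(x)$ already built into the Fell bundle.
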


As we explain in \corref{orbit bundle}, by ``free and proper action on
$\AA$'' we mean that the corresponding action on $\XX$ has these
properties.  By \propref{semidirect orbit bundle action} the action of
$G$ on $\AA$ descends to an action on $\AA/H$, and then the
semidirect-product Fell Bundle $\AA/H\rtimes G\to \XX/H\rtimes G$ acts
on the left of the Banach bundle $\AA$.  Since the hypotheses are
symmetric in $G$ and $H$, with the ``sides'' reversed, we immediately
conclude that
\begin{itemize}
\item $H$ acts on the right of the orbit bundle $G\under\AA$ by $
  (G\cdot a)\cdot h=G\cdot (a\cdot h)$;

\item the semidirect-product Fell bundle $H\ltimes G\under\AA$ acts on
  the right of the Banach bundle $\AA$ by $ a\cdot (h,G\cdot
  b)=(a\cdot h)b$ {if} $s(a\cdot h)=r(b)$.
\end{itemize}

We list, for convenient reference, the formulas for the equivalence of
the base groupoids: by \lemref{semidirect orbit action} the action of
$G$ on $\XX$ descends to an action
\begin{equation*}
  t\cdot (x\cdot H)=(t\cdot x)\cdot H
\end{equation*}
on $\XX/H$, and the semidirect-product groupoid $\XX/H\rtimes G$ acts
on the left of the space $\XX$ by
\begin{equation*}
  (x\cdot H,t)\cdot y=x(t\cdot y) \quad\text{if}\quad s(x)=r(t\cdot y).
\end{equation*}
Again, by symmetry the action of $H$ on $\XX$ descends to an action
\begin{equation*}
  (G\cdot x)\cdot h=G\cdot (x\cdot h)
\end{equation*}
on the orbit groupoid $G\under\XX$, and the semidirect-product
groupoid $H\ltimes G\under\XX$ acts on the right of the space $\XX$ by
\begin{equation*}
  x\cdot (h,G\cdot y)=(x\cdot h)y \quad\text{if}\quad s(x\cdot h)=r(y).
\end{equation*}
We begin by observing that these formulas give a groupoid equivalence:

\begin{lem}
  \label{groupoid equivalence}
  If locally compact groups $G$ and $H$ act freely and properly on the
  left and right, respectively, of a locally compact groupoid $\XX$ by
  automorphisms and if the actions commute, then, with the actions
  indicated above, the space $\XX$ becomes a $(\XX/H\rtimes
  G)-(H\ltimes G\under\XX)$ equivalence.
\end{lem}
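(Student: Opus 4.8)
The plan is to verify, condition by condition, that $\XX$ with the two displayed actions meets the definition of a $(\XX/H\rtimes G)-(H\ltimes G\under\XX)$ equivalence from \cite{mrw}: that the left action along $\rho(x)=r(x)\cdot H$ and the right action along $\sigma(x)=G\cdot s(x)$ are free and proper, that they commute, and that $\rho$ and $\sigma$ descend to homeomorphisms $\XX/(H\ltimes G\under\XX)\cong\XX^{(0)}/H$ and $(\XX/H\rtimes G)\under\XX\cong G\under\XX^{(0)}$. That $\XX/H$ and $G\under\XX$ are locally compact groupoids, that $G$ (resp.\ $H$) descends to an action by automorphisms on $\XX/H$ (resp.\ $G\under\XX$), and that the displayed formulas really do define continuous groupoid actions of $\XX/H\rtimes G$ and $H\ltimes G\under\XX$ on $\XX$, I will take from \lemref{semidirect orbit action}; the maps $\rho$ and $\sigma$ are continuous open surjections, being composites of a range or source map with a quotient map. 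Since the hypotheses are symmetric under interchanging $(G,\text{left})$ with $(H,\text{right})$, I will argue only the left-hand statements and invoke symmetry for the rest.

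Freeness is easy: if $(x\cdot H,t)\cdot y=y$, where $x$ is the representative of $x\cdot H$ with $s(x)=r(t\cdot y)$, then $x(t\cdot y)=y$, so applying the source map gives $s(y)=s(t\cdot y)=t\cdot s(y)$, whence $t=e$ because $G$ acts freely on $\XX^{(0)}$, and then $xy=y$ forces $x=1_{r(y)}$, i.e.\ $(x\cdot H,t)$ is a unit of $\XX/H\rtimes G$. The hard part will be properness, i.e.\ showing that $\bigl((x\cdot H,t),y\bigr)\mapsto\bigl(x(t\cdot y),y\bigr)$ is a proper map from the fibred product $(\XX/H\rtimes G)*\XX$ to $\XX\times\XX$. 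My plan here is a net argument that reduces this to properness of the $G$-action on $\XX$ (equivalently, on the closed invariant subset $\XX^{(0)}$): given a net $\bigl((x_i\cdot H,t_i),y_i\bigr)$ with $y_i\to y$ and $z_i:=x_i(t_i\cdot y_i)\to z$ in $\XX$, observe that $s(z_i)=s(t_i\cdot y_i)=t_i\cdot s(y_i)$, so $\bigl(t_i\cdot s(y_i),s(y_i)\bigr)$ converges in $\XX^{(0)}\times\XX^{(0)}$; properness of the $G$-action then yields, after passing to a subnet, $t_i\to t$ in $G$, hence $t_i\cdot y_i\to t\cdot y$, and since $s(z_i)=s(t_i\cdot y_i)$ the products $x_i=z_i(t_i\cdot y_i)^{-1}$ are defined and converge in $\XX$; thus $x_i\cdot H$ converges in $\XX/H$ and the original net has a convergent subnet in $(\XX/H\rtimes G)*\XX$. (One could instead quote a general lemma on properness of semidirect-product groupoid actions, but I expect the direct argument to be quickest.)

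That the two actions commute is a finite computation: expand both $\bigl((x\cdot H,t)\cdot y\bigr)\cdot(h,G\cdot z)$ and $(x\cdot H,t)\cdot\bigl(y\cdot(h,G\cdot z)\bigr)$ using the action formulas, associativity in $\XX$, the fact that $G$ and $H$ act by automorphisms, and the commutativity of the $G$- and $H$-actions on $\XX$, being mindful that the two evaluation orders may select different orbit representatives for $G\cdot z$, which differ by an element of $G$ reabsorbed by $G$-equivariance. For the orbit identifications I would show that $y,y'\in\XX$ lie in one $\XX/H\rtimes G$-orbit exactly when $G\cdot s(y)=G\cdot s(y')$: invariance of $\sigma$ under $\XX/H\rtimes G$ is immediate from $\sigma\bigl(x(t\cdot y)\bigr)=G\cdot s(y)$, and conversely, if $s(y')=t\cdot s(y)$ then $y'(t\cdot y)^{-1}$ is defined (the sources agree) and $\bigl(y'(t\cdot y)^{-1}\cdot H,\,t\bigr)\cdot y=y'$; hence the $\XX/H\rtimes G$-orbits on $\XX$ are precisely the fibres of $\sigma$, so $\sigma$ descends to a continuous bijection $(\XX/H\rtimes G)\under\XX\to G\under\XX^{(0)}$, which is a homeomorphism because $\sigma$ is a continuous open surjection. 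The companion homeomorphism $\XX/(H\ltimes G\under\XX)\cong\XX^{(0)}/H$ induced by $\rho$ follows by symmetry, and assembling all these facts yields the stated groupoid equivalence.
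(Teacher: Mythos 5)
Your proposal is correct, and its skeleton is the one forced by \cite[Definition~2.1]{mrw} and used in the paper: verify freeness/properness, commutation, and the two orbit-space identifications, invoking the $G\leftrightarrow H$ symmetry to halve the work, and getting the homeomorphism from the fact that the fibring map is open and factors through a bijection. The genuine difference is in how the free-and-proper condition for the $\XX/H\rtimes G$-action on $\XX$ is discharged: the paper simply quotes the appendix (\propref{semidirect orbit action}), whose proof first replaces $\XX$ by the transformation groupoid $\XX/H*\XX^{(0)}$ via the Palais-type structure theorem (\thmref{principal groupoid}) and then runs a compact-set argument, whereas you prove it directly on $\XX$ by a net argument that reduces everything to freeness and properness of the $G$-action restricted to the (closed, invariant) unit space $\XX^{(0)}$, recovering $x_i=z_i(t_i\cdot y_i)\inv$ once a subnet of $(t_i)$ converges. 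Your route is more elementary and self-contained (no structure theorem needed for this lemma), at the cost of redoing work the appendix already packages and that the paper reuses elsewhere; the paper's route localizes all the properness bookkeeping in one appendix proposition. A second, immaterial difference: you verify the quotient identification on the $\sigma$ side, $(\XX/H\rtimes G)\under\XX\cong G\under\XX^{(0)}$, while the paper does the $\rho$ side and gets yours by symmetry; your conclusion that two points lie in one $\XX/H\rtimes G$-orbit iff their $\sigma$-values agree, via $\bigl(y'(t\cdot y)\inv\cdot H,t\bigr)\cdot y=y'$, is the exact mirror of the paper's computation with $z=(x\cdot h)\inv y$. One small caution: where you treat properness of the $G$-action on $\XX$ and on $\XX^{(0)}$ as interchangeable, only the implication from $\XX$ to $\XX^{(0)}$ is needed and immediate (restriction of a proper action to a closed invariant subset); it would be cleaner to state just that implication rather than ``equivalently''.
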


\begin{rem*}
  This lemma is a straight-forward generalization of the well-known
  equivalence for transformation groupoids when $\XX$ is just a space.
\end{rem*}

\begin{proof}
  By \cite[Definition~2.1]{mrw} we must verify the following:
  \begin{compactenum}
  \item $\XX/H\rtimes G$ acts freely and properly;
  \item $H\ltimes G\under\XX$ acts freely and properly;
  \item the actions of $\XX/H\rtimes G$ and $H\ltimes G\under\XX$
    commute;
  \item the associated fibre map $\rho:\XX\to \XX^{(0)}/H\times \{e\}$
    factors through a homeomorphism of $\XX/(H\ltimes G\under\XX)$
    onto $\XX^{(0)}/H\times \{e\}$;
  \item the associated fibre map $\sigma:\XX\to \{e\}\times
    G\under\XX^{(0)}$ factors through a homeomorphism of
    $(\XX/H\rtimes G)\under\XX$ onto $\{e\}\times G\under\XX^{(0)}$.
  \end{compactenum}
  Because our hypotheses are symmetric in $G$ and $H$, it will suffice
  to verify (i), (iii), and (iv).  Moreover, we already know (i) from
  \corref{orbit bundle action}, and (iii) is clear.

  So, it remains to verify (iv).  Recall that $\rho$ is defined by
  \begin{equation*}
    \rho(x)=(r(x)\cdot H,e).
  \end{equation*}
  Since the range map $r:\XX\to \XX^{(0)}$ is open, so is $\rho$.
  Thus it suffices to show that $\rho$ factors through a bijection of
  $\XX/(H\ltimes G\under\XX)$ onto $\XX^{(0)}/H\times \{e\}$.  Clearly
  $\rho$ is a surjection of $\XX$ onto $\XX^{(0)}/H\times \{e\}$, and
  it is invariant under the right $(H\ltimes G\under\XX)$-action:
  \begin{align*}
    \rho\bigl(x\cdot (h,G\cdot y)\bigr) &=\rho\bigl((x\cdot h)y\bigr)
    =\bigl(r\bigl((x\cdot h)y\bigr)\cdot H,e\bigr)
    \\&=\bigl(r(x\cdot h)\cdot H,e\bigr) =\bigl(r(x)\cdot H,e\bigr)
    \\&=\rho(x).
  \end{align*}
  Finally, suppose $\rho(x)=\rho(y)$. We must show that $y\in x\cdot
  (H\ltimes G\under\XX)$.  We have $r(x)\cdot H=r(y)\cdot H$, so there
  exists $h\in H$ such that
  \begin{equation*}
    r(y)=r(x)\cdot h=r(x\cdot h).
  \end{equation*}
  Put
  \begin{equation*}
    z=(x\cdot h)\inv y.
  \end{equation*}
  Then
  \begin{equation*}
    y=(x\cdot h)z=x\cdot (h,G\cdot z).  \qedhere
  \end{equation*}
\end{proof}

It will be convenient to record a few formulas associated with the
above groupoid equivalence.  Letting
\begin{equation*}
  \XX*_\sigma \XX=\bigl\{(x,y)\in
  \XX\times\XX:\sigma(x)=\sigma(y)\bigr\},
\end{equation*}
we have a pairing
\begin{equation*} {}_L[\cdot,\cdot]:\XX*_\sigma\XX\to \XX/H\rtimes G
\end{equation*}
characterized by the following property: $ {}_L[x,y] $ is the unique
element $(z\cdot H,t)\in \XX/H\rtimes G$ such that
\begin{equation*}
  x={}_L[x,y]\cdot y=(z\cdot H,t)\cdot y=z\cdot (t\cdot y),
\end{equation*}
so we have
\begin{equation}
  \label{left bracket} {}_L[x,y] =\bigl(x(t\cdot y\inv)\cdot
  H,t\bigr),\quad
  \text{where $t\in G$ is unique with $s(x)=t\cdot s(y)$.}
\end{equation}
Similarly, if $(x,y)\in \XX*_\rho\XX$, so that $\rho(x)=\rho(y)$, then
\begin{equation}
  \label{right bracket} [x,y]_R=\bigl(h,G\cdot (x\inv\cdot h)y\bigr),
  \quad
  \text{where $h\in H$ is unique with $r(y)=r(x)\cdot h$,}
\end{equation}
is the unique element of $H\ltimes G\under\XX$ such that $ y=x\cdot
[x,y]_R$.

\begin{rem}
  Note that in the statement of \thmref{symmetric action} the formulas
  (i) and (iii) are expressed in ``cleaned-up'' form. For example, in
  (i) either $a$ or $b$ has been adjusted within its $H$-orbit to
  force $s(a)=r(b)$. In practice, we might not always have the luxury
  of making such adjustments, so we must be careful in computing the
  left action of $\AA/H\rtimes G$ on $\AA$: for $(a\cdot H,t)\in
  \AA/H\rtimes G$ and $b\in\AA$, the left module product $(a\cdot
  H,t)\cdot b$ is defined if and only if
  \begin{equation*}
    s\(a\cdot H,t)=\rho(b),
  \end{equation*}
  equivalently
  \begin{equation*}
    (t\inv\cdot s(a)\cdot H,e)=(r(b)\cdot H,e),
  \end{equation*}
  which reduces to
  \begin{equation*}
    t\inv\cdot s(a)\cdot H=r(b)\cdot H,
  \end{equation*}
  and then we have
  \begin{equation*}
    (a\cdot H,t)\cdot b =(a\cdot h)(t\cdot b),
  \end{equation*}
  where $h\in H$ is unique with $s(a)\cdot h=t\cdot r(b)$.  Similarly
  for the right action of $H\ltimes G\under\AA$ on $\AA$.
\end{rem}

\begin{proof}
[Proof of \thmref{symmetric action}]
We have seen in \lemref{groupoid equivalence} that the space $\XX$ is
an $(\XX/H\rtimes G)-(H\ltimes G\under\XX)$ equivalence.  By
\propref{semidirect orbit bundle action}, the Fell bundle
$\AA/H\rtimes G$ acts on the left of the Banach bundle $\AA$ as
indicated in (i), and we have discussed at the beginning of this
section how the right action of $H\ltimes G\under\AA$ on $\AA$ arises
from symmetry.

We must verify the axioms in \cite[Definition~6.1]{mw:fell}, which has
three main items \mbox{(a)--(c)}, with item (b) itself having four
parts.  To improve readability we break the verification into steps.

\begin{step}
  Item (a) of \cite[Definition~6.1]{mw:fell} is that the actions of
  $\AA/H\rtimes G$ and $H\ltimes G\under\AA$ on $\AA$ commute: let
  $(a\cdot H,t)\in \AA/H\rtimes G$, $b\in\AA$, and $(h,G\cdot c)\in
  H\ltimes G\under\AA$, with
  \begin{equation*}
    s(a)\cdot H=r(t\cdot b)\cdot H \and G\cdot s(b\cdot h)=G\cdot r(c)
  \end{equation*}
  so that both
  \begin{equation*}
    (a\cdot H,t)\cdot b \and b\cdot (G\cdot c,h)
  \end{equation*}
  are defined.  We must show that both
  \begin{equation*}
    \bigl((a\cdot H,t)\cdot b\bigr)\cdot (G\cdot c,h) \and (a\cdot
    H,t)\cdot \bigl(b\cdot (G\cdot c,h)\bigr)
  \end{equation*}
  are defined, and coincide in $\AA$.

  Adjust $a$ within the orbit $a\cdot H$ so that $s(a)=r(t\cdot
  b)$. Then
  \begin{equation*}
    (a\cdot H,t)\cdot b=a(t\cdot b).
  \end{equation*}
  Similarly, adjust $c$ within $G\cdot c$ so that $s(b\cdot h)=r(c)$,
  and then
  \begin{equation*}
    b\cdot (h,G\cdot c)=(b\cdot h)c.
  \end{equation*}
  We have
  \begin{align*}
    G\cdot s\bigl(\bigl(a(t\cdot b)\bigr)\cdot h\bigr) &=G\cdot
    s\bigl((a\cdot h)(t\cdot b\cdot h)\bigr) =G\cdot s(t\cdot b\cdot
    h) =G\cdot s(b\cdot h) \\&=G\cdot r(c),
  \end{align*}
  so
  \begin{equation*}
    \bigl(a(t\cdot b)\bigr)\cdot (h,G\cdot c)
  \end{equation*}
  is defined, and then we have
  \begin{equation*}
    \bigl((a\cdot H,t)\cdot b\bigr)\cdot (h,G\cdot c)
    =\bigl(\bigl(a(t\cdot b)\bigr)\cdot h\bigr)(t'\cdot c),
  \end{equation*}
  where $t'\in G$ is unique with
  \begin{align*}
    t'\cdot r(c) =s\bigl(\bigl(a(t\cdot b)\bigr)\cdot h\bigr)
    =s\bigl((a\cdot h)(t\cdot b\cdot h)\bigr)=s(t\cdot b\cdot h)
    =t\cdot s(b\cdot h).
  \end{align*}
  But we have arranged for $r(c)=s(b\cdot h)$, so in fact we have
  $t'=t$, and hence
  \begin{equation*}
    \bigl((a\cdot H,t)\cdot b\bigr)\cdot (h,G\cdot c) =(a\cdot h)(t\cdot
    b\cdot h)(t\cdot c)
  \end{equation*}
  Similarly, we have
  \begin{equation*}
    r\bigl((t\cdot\bigl((b\cdot h)c\bigr)\bigr)\cdot H =s(a)\cdot H,
  \end{equation*}
  so
  \begin{align*}
    (a\cdot H,t)\cdot\bigl(b\cdot(h,G\cdot c)\bigr) &=(a\cdot
    H,t)\cdot \bigl((b\cdot h)c\bigr) =(a\cdot
    h)\bigl(t\cdot\bigl((b\cdot h)c\bigr)\bigr) \\&=(a\cdot h)(t\cdot
    b\cdot h)(t\cdot c),
  \end{align*}
  and we have shown that the actions of $(\AA/H)\rtimes G$ and
  $H\ltimes (G\under\AA)$ commute.
\end{step}

\begin{step}
  Item (b) of \cite[Definition~6.1]{mw:fell} has four parts,
  concerning the left and right inner products.  Before we begin, we
  first check that the left and right inner products in
  \thmref{symmetric action} are well-defined, and by symmetry it
  suffices to check the left-hand inner product.  Let $(a,b)\in
  \AA*_\sigma\AA$.  Then $\sigma(a)=\sigma(b)$, and $\bigl(e,G\cdot
  s(a)\bigr) =\bigl(e,G\cdot s(b)\bigr)$, so that there is a unique
  $t\in G$ such that $ s(a)=t\cdot s(b)$.  Thus
  \begin{equation*}
    s(a)=r\bigl((t\cdot b)^*\bigr),
  \end{equation*}
  and therefore the inner product is well-defined.

  We proceed to the first part of \cite[Definition~6.1 (b)]{mw:fell},
  namely
  \begin{equation*}
    p_{\XX/H\rtimes G}\left({}_L\<a,b\>\right) ={}_{\XX/H\rtimes
      G}\bigl[p(a),p(b)\bigr] \righttext{for}(a,b)\in \AA*_\sigma \AA,
  \end{equation*}
  where
  \begin{equation*}
    p_{\XX/H\rtimes G}:\AA/H\rtimes G\to \XX/H\rtimes G
  \end{equation*}
  is the bundle projection.  According to the definition of the left
  inner product, we have
  \begin{align*}
    p\bigl({}_L\<a,b\>\bigr) &=p\bigl(\bigl(a(t\cdot b^*)\cdot
    H,t\bigr)\bigr) =\bigl(p\bigl(a(t\cdot b^*)\bigr)\cdot H,t\bigr)\\
    &=\bigl(p(a)\bigl(t\cdot p(b)\inv\bigr)\cdot H,t\bigr)
    ={}_L[p(a),p(b)],
  \end{align*}
  by \eqref{left bracket}.
\end{step}

\begin{step}
  The second part of \cite[Definition~6.1 (b)]{mw:fell} is
  \begin{equation*} {}_L\<a,b\>^*={}_L\<b,a\> \and
    \<a,b\>_R^*=\<b,a\>_R,
  \end{equation*}
  and again by symmetry it suffices to prove the first.  Let
  $a,b\in\AA$ with $\sigma(a)=\sigma(b)$, and let $t\in G$ be unique
  such that $ s(a)=t\cdot s(b)$.  Then
  \begin{align*} {}_L\<a,b\>^* &=\bigl(a(t\cdot b^*)\cdot H,t\bigr)^*
    =\bigl(t\inv\cdot\bigl(a(t\cdot b^*)\bigr)^*\cdot H,t\bigr)
    \\&=\bigl(t\inv\cdot\bigl((t\cdot b)a^*\bigr)^*\cdot H,t\bigr)
    =\bigl(b(t\inv\cdot a^*)\cdot H,t\inv) ={}_L\<b,a\>.
  \end{align*}
\end{step}

\begin{step}
  The third part of \cite[Definition~6.1 (b)]{mw:fell} is
  \begin{equation*} {}_L\bigl\<(a\cdot H,t)\cdot b,c\bigr\> =(a\cdot
    H,t){}_L\<b,c\>,
  \end{equation*}
  and a similar equality involving $\<\cdot,\cdot\>_R$, but by
  symmetry it suffices to show it for the left inner product: let
  \begin{equation*}
    s\bigl(p_{\AA/H\rtimes G}(a\cdot H,t)\bigr)=\rho(b) \and
    \sigma(b)=\sigma(c),
  \end{equation*}
  so that
  \begin{equation*}
    s(a)\cdot H=t\cdot r(b)\cdot H \midtext{and} G\cdot s(b)=G\cdot
    s(c).
  \end{equation*}
  Adjust $a$ within its orbit $a\cdot H$ so that $s(a)=t\cdot r(b)$,
  and choose the unique $t'\in G$ such that $s(b)=t'\cdot s(c)$.  Then
  \begin{align*}
    (a\cdot H,t){}_L\<b,c\> &=(a\cdot H,t)\bigl(b(t'\cdot c^*)\cdot
    H,t'\bigr) =\bigl(a\bigl(t\cdot \bigl(b(t'\cdot
    c^*)\bigr)\bigr)\cdot H,tt'\bigr) \\&=\bigl(a(t\cdot b)(tt'\cdot
    c)\cdot H,tt'\bigr) ={}_L\bigl\<a(t\cdot b),c\bigr\>
    ={}_L\bigl\<(a\cdot H,t)\cdot b,c\bigr\>
  \end{align*}
\end{step}

\begin{step}
  The fourth part of \cite[Definition~6.1 (b)]{mw:fell} is
  \begin{equation*} {}_L\<a,b\>\cdot c=a\cdot \<b,c\>_R.
  \end{equation*}
  More precisely, we need to show that for $a,b,c\in\AA$, if both
  ${}_L\<a,b\>$ and $\<b,c\>_R$ are defined, then so are
  \begin{equation*} {}_L\<a,b\>\cdot c \and a\cdot \<b,c\>_R,
  \end{equation*}
  and they are equal.  Thus, we are assuming that
  \begin{equation*}
    \sigma(a)=\sigma(b) \midtext{and} \rho(b)=\rho(c),
  \end{equation*}
  which entails that
  \begin{equation*}
    G\cdot s(a)=G\cdot s(b) \midtext{and} r(b)\cdot H=r(c)\cdot H.
  \end{equation*}
  Choose the unique $t\in G$ and $h\in H$ such that $ s(a)=t\cdot
  s(b)$ and $ r(b)\cdot h=r(c)$.  Then
  \begin{align*} {}_L\<a,b\>\cdot c &=\bigl(a(t\cdot b^*)\cdot
    H,t\bigr)\cdot c =\bigl(\bigl(a(t\cdot b^*)\bigr)\cdot
    h\bigr)(t\cdot c) =(a\cdot h)(t\cdot b^*\cdot h)(t\cdot c)
    \\&=(a\cdot h)\bigl(t\cdot \bigl((b^*\cdot h)c\bigr)\bigr) =a\cdot
    \bigl(h,G\cdot (b^*\cdot h)c\bigr) =a\cdot \<b,c\>_R,
  \end{align*}
  where the second equality is justified by
  \begin{align*}
    s\bigl((a(t\cdot b^*)\bigr)\cdot h\bigr) &=s\bigl((a\cdot
    h)(t\cdot b^*\cdot h)\bigr) =s(t\cdot b^*\cdot h) =t\cdot
    s(b^*\cdot h) \\&=t\cdot r(b\cdot h) =t\cdot r(c),
  \end{align*}
  and a similar computation justifies the fifth equality.
\end{step}

\begin{step}
  Finally, item (c) of \cite[Definition~6.1 (c)]{mw:fell} is that that
  the operations (i)--(iv) make each fibre $A(x)$ of the Banach bundle
  $\AA$ into an imprimitivity bimodule between the corresponding
  fibres
  \begin{equation*}
    A(r(x))\cdot H\times\{e\} \and \{e\}\times G\cdot A(s(x))
  \end{equation*}
  of the Fell bundles
  \begin{equation*}
    \AA/H\rtimes G \and H\ltimes G\under\AA,
  \end{equation*}
  respectively.  For this we just have to observe that, in view of the
  obvious isomorphisms
  \begin{equation*}
    A(r(x))\cdot H\times \{e\}\cong A(r(x)) \midtext{and} \{e\}\times
    G\cdot A(s(x))\cong A(s(x)),
  \end{equation*}
  our inner products and actions \emph{coincide} with those on the
  $A(r(x))-A(s(x))$ imprimitivity bimodule $A(x)$.  \qedhere
\end{step}
\end{proof}

We will need the special case of \thmref{symmetric action} where one
group is trivial, and by symmetry it suffices to consider the case
where the group $H$ is trivial:

\begin{cor}
  \label{one-sided action}
  If a locally compact group $G$ acts freely and properly on the left
  of a Fell bundle $p:\AA\to\XX$ over a locally compact groupoid, then
  $\AA$ becomes a $\AA\rtimes G-G\under \AA$ equivalence in the
  following way:
  \begin{compactenum}
  \item $\AA\rtimes G$ acts on the left of $\AA$ by
    \begin{equation*}
      (a,t)\cdot b=a(t\cdot b) \quad\text{if}\quad s(a)=t\cdot r(b);
    \end{equation*}

  \item the left inner product is given by
    \begin{equation*} {}_L\<a,b\> =\bigl(a(t\cdot b^*),t\bigr)
      \quad\text{if}\quad G\cdot s(a)=G\cdot s(b),
    \end{equation*}
    where $t$ is the unique element of $G$ such that $ s(a)=t\cdot
    s(b)$;

  \item $G\under\AA$ acts on the right of $\AA$ by
    \begin{equation*}
      a\cdot (G\cdot b)=ab \quad\text{if}\quad s(a)=r(b);
    \end{equation*}

  \item the right inner product is given by
    \begin{equation*}
      \<a,b\>_R =G\cdot a^*b \quad\text{if}\quad r(a)=r(b).
    \end{equation*}
  \end{compactenum}
\end{cor}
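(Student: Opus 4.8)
The plan is to obtain this corollary as the special case of \thmref{symmetric action} in which the group $H$ is the trivial group $\{e\}$, acting trivially on the right of $\AA$. First I would check that this trivial action meets the standing hypotheses of \thmref{symmetric action}: a one-element group acting trivially is automatically free and proper (on $\AA$, hence on $\XX$ in the sense of \corref{orbit bundle}), and it commutes with the given left $G$-action for trivial reasons. Thus \thmref{symmetric action} applies and endows $\AA$ with an $(\AA/H\rtimes G)-(H\ltimes G\under\AA)$ equivalence structure; all that remains is to rewrite this in the notation of the corollary.

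The bulk of the work is therefore the identification of the various bundle constructions when $H=\{e\}$. Since $H$ acts trivially, the orbit bundle $\AA/H$ is canonically $\AA$ itself and $\XX/H=\XX$; likewise the semidirect-product Fell bundle $H\ltimes G\under\AA$ collapses to $G\under\AA$ over the base groupoid $G\under\XX$, because the $H$-coordinate carries no information. I would record these identifications explicitly (referring to the appendix constructions of \S\ref{sec:transf-group-bundl}) and then push the four formulas (i)--(iv) of \thmref{symmetric action} through them. In (i) the condition $s(a)=r(t\cdot b)$ becomes $s(a)=t\cdot r(b)$ since $G$ acts by automorphisms; in (ii) the factor $\cdot H$ disappears, giving ${}_L\<a,b\>=(a(t\cdot b^*),t)$; in (iii) the unique $h\in H$ is forced to be $e$, so $s(a\cdot h)=r(b)$ reduces to $s(a)=r(b)$ and $a\cdot(h,G\cdot b)=(a\cdot h)b$ reduces to $ab$; and in (iv) the condition $r(a)\cdot H=r(b)\cdot H$ reduces to $r(a)=r(b)$, the unique $h$ is again $e$, and $\bigl(h,G\cdot(a^*\cdot h)b\bigr)$ becomes $G\cdot a^*b$. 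Each of these is a routine substitution, and by symmetry one could equally take $G$ trivial to obtain the mirror-image statement.

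The only point requiring genuine care is the verification that $\AA\rtimes\{e\}=\AA$ and $\{e\}\ltimes\AA=\AA$ as Banach (indeed Fell) bundles, including that their fibrings, multiplications, and involutions match those coming out of the appendix constructions; I expect this bookkeeping to be the main obstacle, although it is a matter of unwinding definitions rather than of any real difficulty. Once these identifications are in place the conclusion is immediate from \thmref{symmetric action}, and in particular no re-verification of the equivalence axioms of \cite[Definition~6.1]{mw:fell} is needed.
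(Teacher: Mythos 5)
Your proposal matches the paper's own treatment exactly: the paper introduces this corollary with the remark that it is ``the special case of \thmref{symmetric action} where one group is trivial,'' taking $H=\{e\}$ and leaving the identifications $\AA/H\cong\AA$, $H\ltimes G\under\AA\cong G\under\AA$ and the resulting simplification of formulas (i)--(iv) as routine, which is precisely the bookkeeping you spell out. No gap; your write-up is simply a more explicit version of the same argument.
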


\begin{rem}
  Of course, if we have an action of a group $H$ on the right of $\AA$
  instead of $G$ acting on the left, the corresponding equivalence
  will be
  \begin{equation*}
    \AA/H\sim H\ltimes \AA.
  \end{equation*}
\end{rem}

\begin{rem}
  Note that (iii) has been expressed in ``cleaned-up'' form; in
  general, if $a,b\in\AA$ then the right-module product $a\cdot
  (G\cdot b)$ is defined if and only if
  \begin{equation*}
    G\cdot s(a)=G\cdot r(b),
  \end{equation*}
  and then we have
  \begin{equation*}
    a\cdot (G\cdot b)=a(t\cdot b)
  \end{equation*}
  where $t$ is the unique element of $G$ such that $ s(a)=t\cdot
  r(b)$.
\end{rem}

It will be convenient to have another version of \corref{one-sided
  action}, recorded as \corref{one-sided transformation} below, with
the Fell bundle $\AA\to\XX$ replaced with the isomorphic
transformation bundle.


\begin{cor}
  \label{one-sided transformation}
  Let $p:\BB\to\YY$ be a Fell bundle over a locally compact groupoid,
  let $\YY$ act on the left of a locally compact Hausdorff space
  $\Omega$, and let the associated fibre map $\rho:\Omega\to
  \YY^{(0)}$ be a principal $G$-bundle, with the locally compact group
  $G$ acting on the left of $\Omega$.  Further assume that the actions
  of $\YY$ and $G$ on $\Omega$ commute.  Then the transformation
  bundle $\BB*\Omega\to \YY*\Omega$ becomes a $(\BB*\Omega)\rtimes
  G-\BB$ equivalence in the following way:
  \begin{enumerate}
  \item $(\BB*\Omega)\rtimes G$ acts on the left of $\BB*\Omega$ by
    \begin{equation*}
      (b,p(c)\cdot t\cdot u,t)\cdot (c,u)=(bc,t\cdot u)
      \quad\text{if}\quad s(b)=r(c).
    \end{equation*}

  \item the left inner product is given by
    \begin{equation*} {}_L\bigl\<(b,t\cdot u),(c,u)\bigr\>
      =\bigl(bc^*,t\cdot p(c)\cdot u,t\bigr) \quad\text{if}\quad
      s(b)=s(c);
    \end{equation*}

  \item $\BB$ acts on the right of $\BB*\Omega$ by
    \begin{equation*}
      (b,u)\cdot c=(bc,p(c)\inv\cdot u) \quad\text{if}\quad s(b)=r(c);
    \end{equation*}

  \item the right inner product is given by
    \begin{equation*}
      \bigl\<(b,p(b^*c)\cdot u),(c,u)\bigr\>_\BB =b^*c
      \quad\text{if}\quad r(b)=r(c).
    \end{equation*}
  \end{enumerate}
\end{cor}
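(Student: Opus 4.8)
The plan is to derive \corref{one-sided transformation} directly from \corref{one-sided action}, taking the Fell bundle there to be the transformation bundle itself.

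First I would check that $G$ acts freely and properly by automorphisms on $\BB*\Omega\to\YY*\Omega$. Since $\rho$ is the projection of a principal $G$-bundle it is $G$-invariant, so $t\cdot(y,u)=(y,t\cdot u)$ is well defined on $\YY*\Omega=\{(y,u):s(y)=\rho(u)\}$; it is an action \emph{by automorphisms} of the transformation groupoid precisely because the $\YY$- and $G$-actions on $\Omega$ commute, and the corresponding formula $t\cdot(b,u)=(b,t\cdot u)$ then gives an action on $\BB*\Omega$ by bundle automorphisms. This action is free because the $G$-action on $\Omega$ is, and it is proper because the $G$-action on $\Omega$ is proper and the action on $\YY*\Omega$ is the restriction of $\id_\YY\times(\text{action on }\Omega)$ to the closed invariant subset $\YY*\Omega\subseteq\YY\times\Omega$ (see \S\ref{sec:transf-group-bundl}). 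Hence \corref{one-sided action} applies with $\AA=\BB*\Omega$ and tells us that $\BB*\Omega$ is a $\bigl((\BB*\Omega)\rtimes G\bigr)-\bigl(G\under(\BB*\Omega)\bigr)$ equivalence, with left action, left inner product, right action, and right inner product given by formulas (i)--(iv) of \corref{one-sided action} (well-definedness of those formulas is inherited, so only a rewriting is needed below).

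Next I would identify $G\under(\BB*\Omega)$ with $\BB$. Because $\rho$ is a principal $G$-bundle, each fibre $\rho\inv(v)$ is a single $G$-orbit, so for fixed $b\in\BB$ all pairs $(b,u)$ with $s(b)=\rho(u)$ lie in one $G$-orbit; therefore $G\cdot(b,u)\mapsto b$ is a well-defined bijection of $G\under(\BB*\Omega)$ onto $\BB$, covering the groupoid isomorphism $G\under(\YY*\Omega)\iso\YY$, $G\cdot(y,u)\mapsto y$. I would then check that on representatives with matching $\Omega$-coordinates the multiplication $(b,u')(c,u)=(bc,u)$ and involution $(b,u)^*=(b^*,p(b)\cdot u)$ of $\BB*\Omega$ descend to the operations of $\BB$, so this is an isomorphism of Fell bundles (homeomorphy again being a standard principal-bundle fact; cf.\ \S\ref{sec:transf-group-bundl}). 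Consequently $(\BB*\Omega)\rtimes G\sim\BB$.

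The remaining step is bookkeeping: push the four formulas of \corref{one-sided action} through these identifications and match them with (i)--(iv). For instance, for (iii) one takes $a=(b,u)\in\BB*\Omega$ and a right orbit $G\cdot(c,v)$, identified with $c\in\BB$; by the remark following \corref{one-sided action} the product $a\cdot\bigl(G\cdot(c,v)\bigr)$ is defined precisely when $G\cdot s(a)=G\cdot r\bigl((c,v)\bigr)$, i.e.\ when $\rho(u)=r(c)$, i.e.\ $s(b)=r(c)$, and it equals $a\cdot\bigl(t\cdot(c,v)\bigr)$ where $t\in G$ is unique with $u=t\cdot\bigl(p(c)\cdot v\bigr)=p(c)\cdot(t\cdot v)$; thus $t\cdot v=p(c)\inv\cdot u$ and $a\cdot\bigl(G\cdot(c,v)\bigr)=(bc,p(c)\inv\cdot u)$, which is exactly (iii). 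The same kind of computation, using $(c,u)^*=(c^*,p(c)\cdot u)$, turns \corref{one-sided action}(i), (ii), (iv) into (i), (ii), (iv) here. The only structural input is the identification $G\under(\BB*\Omega)\cong\BB$, which rests entirely on $\rho$ being a principal $G$-bundle; the main \emph{obstacle} is purely clerical — keeping the composability and $\Omega$-matching conditions straight, and reconciling the ``cleaned-up'' coordinates in the statement (the first entry written as $(b,t\cdot u)$ in (ii), or as $(b,p(b^*c)\cdot u)$ in (iv)) with the general forms by solving the relevant matching equation for one base-space coordinate in terms of the others.
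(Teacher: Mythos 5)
Your proposal is correct and is essentially the paper's own (implicit) argument: the corollary is obtained by applying Corollary~\ref{one-sided action} to the transformation bundle $\BB*\Omega$ with the $G$-action $t\cdot(b,u)=(b,t\cdot u)$ (free and proper because $\rho$ is a principal $G$-bundle, an action by automorphisms because the $\YY$- and $G$-actions commute), identifying $G\under(\BB*\Omega)$ with $\BB$ exactly as in the discussion surrounding Lemma~\ref{act transformation} and Theorem~\ref{principal Fell}, and then translating the four formulas. Your sample computation for (iii), including solving $u=p(c)\cdot(t\cdot v)$ for $t\cdot v=p(c)\inv\cdot u$, is the same bookkeeping the paper intends.
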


Note that the fibring maps
\begin{equation*}
  \rho:\YY*\Omega\to (\YY^{(0)}*\Omega)\times \{e\} \midtext{and}
  \sigma:\YY*\Omega\to \YY^{(0)}
\end{equation*}
associated to the actions of $(\YY*\Omega)\rtimes G$ and $\YY$ are
given by
\begin{equation*}
  \rho(y,u)=(r(y),y\cdot u,e) \midtext{and} \sigma(y,u)=s(y).
\end{equation*}
Also note that every section $f\in \Gamma_c((\BB*\Omega)\rtimes G)$ is
uniquely of the form
\begin{equation*}
  f(y,u,s)=\bigl(f_1(y,u,s),u,s\bigr),
\end{equation*}
where $f_1:(\YY*\Omega)\rtimes G\to \BB$ is continuous with compact
support and satisfies
\begin{equation*}
  f_1(y,u,s)\in B(y).
\end{equation*}

Once we have the Fell-bundle equivalence \thmref{symmetric action}, by
\cite[Theorem~6.4]{mw:fell} we have a morita equivalence:

\begin{cor}\label{symmetric Morita}
  With the hypotheses of \thmref{symmetric action} we have a Morita
  equivalence
  \begin{equation*}
    C^*(\AA/H)\rtimes_\alpha G\sim C^*(G\under\AA)\rtimes_\beta H,
  \end{equation*}
  where $\alpha$ and $\beta$ are the associated actions of $G$ and $H$
  on the respective Fell-bundle $C^*$-algebras.
\end{cor}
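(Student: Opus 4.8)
The plan is to feed the Fell-bundle equivalence of \thmref{symmetric action} into the equivalence theorem \cite[Theorem~6.4]{mw:fell} and then rewrite the resulting pair of cross-sectional $C^*$-algebras as the asserted crossed products.

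First, \thmref{symmetric action} exhibits the Banach bundle $\AA$ as an $(\AA/H\rtimes G)-(H\ltimes G\under\AA)$ equivalence in the sense of \cite[Definition~6.1]{mw:fell}. Applying \cite[Theorem~6.4]{mw:fell} to this equivalence therefore yields a Morita equivalence
\[
  C^*(\AA/H\rtimes G)\sim C^*(H\ltimes G\under\AA)
\]
between the full cross-sectional $C^*$-algebras of the two semidirect-product Fell bundles. (The standing separability and second-countability assumptions of \S\ref{prelim} guarantee that the hypotheses of \cite[Theorem~6.4]{mw:fell} are met.)

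Second, I would identify each side with a crossed product using the semidirect-product constructions of \S\ref{sec:transf-group-bundl}. By \propref{semidirect orbit bundle action} the group $G$ acts by automorphisms on the quotient Fell bundle $\AA/H\to\XX/H$, and the basic fact about semidirect products of Fell bundles is that the induced action $\alpha$ of $G$ on $C^*(\AA/H)$ is strongly continuous and that $C^*(\AA/H\rtimes G)$ is naturally isomorphic to the crossed product $C^*(\AA/H)\rtimes_\alpha G$. By the symmetry built into \thmref{symmetric action}, the same applies on the right: $H$ acts by automorphisms on $G\under\AA\to G\under\XX$, giving an action $\beta$ on $C^*(G\under\AA)$ with $C^*(H\ltimes G\under\AA)\cong C^*(G\under\AA)\rtimes_\beta H$. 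Substituting these two isomorphisms into the displayed Morita equivalence gives exactly
\[
  C^*(\AA/H)\rtimes_\alpha G\sim C^*(G\under\AA)\rtimes_\beta H,
\]
as claimed.

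The only step with genuine content is the identification in the second paragraph --- verifying that an action by automorphisms of a group on a Fell bundle integrates to a strongly continuous action on the cross-sectional $C^*$-algebra, and that the crossed product by that action is recovered as the $C^*$-algebra of the semidirect-product Fell bundle. This is precisely what the appendix's treatment of semidirect products of Fell bundles is built to provide, so once those results are in hand the corollary follows formally, with no further estimates or constructions required. A minor bookkeeping point is the placement of the group factors ($H\ltimes(\cdot)$ versus $(\cdot)\rtimes H$), but for crossed products by a single group these agree under the obvious identification.
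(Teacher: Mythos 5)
Your proposal is correct and follows essentially the same route as the paper: the Fell-bundle equivalence of \thmref{symmetric action} is fed into \cite[Theorem~6.4]{mw:fell}, and then each semidirect-product cross-sectional algebra is identified with the corresponding crossed product. The only cosmetic difference is that the paper cites \cite[Theorem~7.1]{kmqw1} for the identification $C^*(\AA/H\rtimes G)\cong C^*(\AA/H)\rtimes_\alpha G$ (and its mirror image), rather than deriving it from the appendix, which only constructs the semidirect-product bundles themselves.
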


\begin{proof}
  This follows from \thmref{symmetric action},
  \cite[Theorem~6.4]{mw:fell}, and \cite[Theorem~7.1]{kmqw1}.
\end{proof}

Similarly, we have a one-sided Morita equivalence:

\begin{cor}\label{one-sided Morita}
  With the hypotheses of \corref{one-sided action} we have a Morita
  equivalence
  \begin{equation*}
    C^*(\AA)\rtimes_\alpha G\sim C^*(G\under\AA),
  \end{equation*}
  where $\alpha$ is as in \corref{symmetric Morita}.
\end{cor}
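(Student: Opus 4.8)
The plan is to deduce this exactly as \corref{symmetric Morita} was deduced, simply specializing to the case in which the group $H$ is trivial; equivalently, one may run the same three-step argument starting from \corref{one-sided action} in place of \thmref{symmetric action}. I do not expect any serious obstacle here: the content is entirely contained in the Fell-bundle equivalence of \corref{one-sided action} together with the two imported theorems \cite[Theorem~6.4]{mw:fell} and \cite[Theorem~7.1]{kmqw1}, so the proof will be short.

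Concretely, first I would invoke \corref{one-sided action}: under the stated hypotheses the Banach bundle $\AA$ is an $\AA\rtimes G-G\under\AA$ equivalence in the sense of \cite[Definition~6.1]{mw:fell}. Next I would apply \cite[Theorem~6.4]{mw:fell}, which promotes a Fell-bundle equivalence to a Morita equivalence of the associated full cross-sectional $C^*$-algebras, obtaining
\[
  C^*(\AA\rtimes G)\sim C^*(G\under\AA).
\]
Finally I would identify $C^*(\AA\rtimes G)$ with $C^*(\AA)\rtimes_\alpha G$ by \cite[Theorem~7.1]{kmqw1}, which says precisely that the cross-sectional $C^*$-algebra of a semidirect-product Fell bundle $\AA\rtimes G$ is naturally isomorphic to the crossed product of $C^*(\AA)$ by the induced action $\alpha$. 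Combining the last two facts gives the claimed equivalence.

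The only point requiring a word of care — and hence the ``hard'' part, such as it is — is the bookkeeping. If one takes the route through \corref{symmetric Morita} with $H=\{e\}$, one should observe that then $\AA/H=\AA$, that the semidirect-product bundle $H\ltimes G\under\AA$ over $H\ltimes G\under\XX$ collapses to $G\under\AA$ over $G\under\XX$, and that the crossed product $C^*(G\under\AA)\rtimes_\beta H$ by the trivial group is just $C^*(G\under\AA)$. If instead one argues directly as in the previous paragraph, one should check that the separability and Haar-system hypotheses of \cite[Theorem~6.4]{mw:fell} and \cite[Theorem~7.1]{kmqw1} are inherited by $\AA\rtimes G$ and $G\under\AA$ from $\AA$ — which they are, by the constructions recalled in \S\ref{sec:transf-group-bundl}. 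Either way, the verification is routine and adds nothing beyond what is already established.
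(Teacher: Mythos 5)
Your argument is correct and matches the paper's (implicit) proof: the paper simply notes that the one-sided case follows ``similarly,'' i.e., from \corref{one-sided action} together with \cite[Theorem~6.4]{mw:fell} and \cite[Theorem~7.1]{kmqw1}, exactly as \corref{symmetric Morita} followed from \thmref{symmetric action}. Your bookkeeping remarks about specializing $H=\{e\}$ are fine but not needed beyond what the paper already records.
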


And here is the version for transformation bundles:

\begin{cor}\label{one-sided transformation Morita}
  With the hypotheses of \corref{one-sided transformation} we have a
  Morita equivalence
  \begin{equation*}
    C^*(\BB*\Omega)\rtimes_\alpha G\sim C^*(\BB),
  \end{equation*}
  where $\alpha$ is as in \corref{one-sided Morita}.
\end{cor}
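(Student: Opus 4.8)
The plan is to derive \corref{one-sided transformation Morita} from \corref{one-sided transformation} in exactly the way \corref{symmetric Morita} is derived from \thmref{symmetric action} and \corref{one-sided Morita} from \corref{one-sided action}: feed the Fell-bundle equivalence into the equivalence theorem \cite[Theorem~6.4]{mw:fell}, and then identify the cross-sectional $C^*$-algebra of the semidirect-product bundle via \cite[Theorem~7.1]{kmqw1}.

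First I would check that the hypotheses of \corref{one-sided transformation} are precisely what is needed to apply \corref{one-sided action} to the transformation bundle $\BB*\Omega\to \YY*\Omega$ equipped with the left $G$-action inherited from $\Omega$. Since $\rho:\Omega\to\YY^{(0)}$ is a principal $G$-bundle and the $\YY$- and $G$-actions on $\Omega$ commute, the induced $G$-action on the groupoid $\YY*\Omega$ is free and proper, which (by the convention recorded after \thmref{symmetric action}) is the meaning of a free and proper $G$-action on $\BB*\Omega$. Hence \corref{one-sided action} applies and shows that $\BB*\Omega$ is a $(\BB*\Omega)\rtimes G-\bigl(G\under(\BB*\Omega)\bigr)$ equivalence. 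The one genuinely substantive point is the identification of the orbit Fell bundle $G\under(\BB*\Omega)$ with $\BB$ itself, as a Fell bundle with its left Haar system: because $\rho$ is principal, the orbit map identifies $G\under\Omega$ with $\YY^{(0)}$, hence $G\under(\YY*\Omega)$ with $\YY$, and the fibrewise map $(b,u)\mapsto b$ descends to a Fell-bundle isomorphism $G\under(\BB*\Omega)\iso\BB$ covering this homeomorphism, carrying the right action and right inner product of \corref{one-sided action} to those displayed in parts (iii) and (iv) of \corref{one-sided transformation}. This is, in effect, exactly the content of \corref{one-sided transformation}, so I would simply invoke it.

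With the isomorphism $G\under(\BB*\Omega)\cong\BB$ available, the remainder is assembly. Applying \corref{one-sided Morita} to the Fell bundle $\BB*\Omega\to\YY*\Omega$ with its $G$-action — equivalently, applying \cite[Theorem~6.4]{mw:fell} to the Fell-bundle equivalence of \corref{one-sided transformation} to get a Morita equivalence $C^*\bigl((\BB*\Omega)\rtimes G\bigr)\sim C^*(\BB)$, and then \cite[Theorem~7.1]{kmqw1} to rewrite $C^*\bigl((\BB*\Omega)\rtimes G\bigr)$ as $C^*(\BB*\Omega)\rtimes_\alpha G$ — yields
\[
C^*(\BB*\Omega)\rtimes_\alpha G\;\sim\;C^*\bigl(G\under(\BB*\Omega)\bigr)\;\cong\;C^*(\BB),
\]
with $\alpha$ the action from \corref{one-sided Morita}, which is precisely the asserted Morita equivalence.

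I do not expect an analytic obstacle here: there are no completions or estimates to push through beyond those already packaged in \cite[Theorem~6.4]{mw:fell} and \cite[Theorem~7.1]{kmqw1}. The only place where care is required is bookkeeping — verifying that the obvious map inducing $G\under(\BB*\Omega)\iso\BB$ is continuous, open, multiplicative, $*$-preserving, and Haar-system-preserving, and confirming that the resulting action $\alpha$ of $G$ on $C^*(\BB*\Omega)$ coincides with the one in the earlier corollaries so that the statement is notationally consistent.
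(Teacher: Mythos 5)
Your proposal is correct and follows essentially the same route the paper intends: feed the Fell-bundle equivalence of \corref{one-sided transformation} into \cite[Theorem~6.4]{mw:fell} and identify $C^*\bigl((\BB*\Omega)\rtimes G\bigr)$ with $C^*(\BB*\Omega)\rtimes_\alpha G$ via \cite[Theorem~7.1]{kmqw1}, exactly as in \corref{symmetric Morita} and \corref{one-sided Morita}. Your observation that the identification $G\under(\BB*\Omega)\cong\BB$ is the substantive point is apt, but as you note it is already packaged into the statement of \corref{one-sided transformation} (and is the content of the principal-bundle discussion surrounding \thmref{principal Fell}), so no further argument is needed.
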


\subsection*{Special case: $C^*$-bundles}

Here we specialize to the case where $\XX=X$ is a \emph{space} and
$\AA$ is just a $C^*$-bundle over $X$, so that
$C^*(\AA)=\Gamma_0(\AA)$.  Then the orbit bundle is the $C^*$-bundle
$G\under\AA\to G\under X$.

\begin{prop}
  \label{C*-bundle}
  If a locally compact group $G$ acts freely and properly on a
  $C^*$-bundle $\AA\to X$ over a locally compact Hausdorff space $X$,
  then we have a Morita equivalence
  \begin{equation*}
    \Gamma_0(\AA)\rtimes_\alpha G\sim \Gamma_0(G\under\AA),
  \end{equation*}
  where $\alpha:G\to\aut\Gamma_0(\AA)$ is the associated action.
\end{prop}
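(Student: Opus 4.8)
The plan is to deduce this as the special case $\XX=X$ of \corref{one-sided action} and \corref{one-sided Morita}, viewing the $C^*$-bundle $\AA\to X$ as a Fell bundle over $X$ regarded as a groupoid consisting only of units (so $X^{(0)}=X$, $r=s=\id$, and the Haar system is the family of point masses $\{\delta_x\}$). First I would recall the standard identification: for such a unit groupoid the ``convolution'' on $\Gamma_c(\AA)$ is just fibrewise multiplication, so the full cross-sectional $C^*$-algebra satisfies $C^*(\AA)=\Gamma_0(\AA)$ (this is essentially built into the conventions of \cite{kmqw1, mw:fell}). Moreover, under this identification the ``associated action'' $\alpha$ of $G$ on $C^*(\AA)$ that appears in \corref{symmetric Morita} and \corref{one-sided Morita} is precisely the action of $G$ on $\Gamma_0(\AA)$ induced by the given action of $G$ on the bundle $\AA$, so there is no discrepancy of notation.

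Next I would observe that the hypotheses of \corref{one-sided action} are met here. As explained after \thmref{symmetric action}, a free and proper action of $G$ on the $C^*$-bundle $\AA$ means exactly that the induced action of $G$ on $X=\XX$ is free and proper; this is the present hypothesis. The orbit bundle $G\under\AA\to G\under X$ (constructed as in the appendix material underlying \corref{one-sided action}) is then again an upper semicontinuous $C^*$-bundle over the locally compact Hausdorff space $G\under X$, since $G$ acts freely and properly on $X$; hence the same identification gives $C^*(G\under\AA)=\Gamma_0(G\under\AA)$.

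Finally I would simply invoke \corref{one-sided Morita}: with the hypotheses of \corref{one-sided action} we obtain a Morita equivalence $C^*(\AA)\rtimes_\alpha G\sim C^*(G\under\AA)$, which under the two identifications above reads $\Gamma_0(\AA)\rtimes_\alpha G\sim \Gamma_0(G\under\AA)$, as desired. I expect the only point requiring any care is the first one --- the identification $C^*(\AA)=\Gamma_0(\AA)$, together with the compatibility of the two descriptions of $\alpha$ --- and this is routine once one unwinds the definitions of the Haar system and of convolution over a unit groupoid; all the substantive content is already contained in \corref{one-sided action} and \corref{one-sided Morita}.
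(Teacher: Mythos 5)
Your argument is exactly the paper's: the proposition is obtained by specializing \corref{one-sided Morita} to the case $\XX=X$ a space, using the identification $C^*(\AA)=\Gamma_0(\AA)$ (and likewise for the orbit bundle over $G\under X$), with the associated action $\alpha$ matching the induced action on sections. The paper treats this as immediate and gives no further proof, so your proposal is correct and follows the same route.
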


\subsection*{Special case: coaction-crossed products}
We start with a Fell bundle $\BB$ over a locally compact group $G$,
then form the transformation Fell bundle $\AA=\BB\times G$ over the
transformation groupoid $G\times_{\lt} G$, where $G$ acts on itself by
left translation.  Then $G$ acts freely and properly on $\BB\times G$
by right translation in the second coordinate:
\begin{equation*}
  t\cdot (b,s)=(b,st\inv).
\end{equation*}
We identify the orbit bundle $G\under(\BB\times G)$ with $\BB$, and
the orbit groupoid $(G\times_{\lt} G)/G$ with the group $G$.

By \cite[Theorem~5.1]{kmqw1} there are a maximal coaction $\delta$ of
$G$ on $C^*(\BB)$ and an $\alpha-\hat\delta$ equivariant isomorphism
\begin{equation*}
  C^*(\BB\times G)\cong C^*(\BB)\rtimes_\delta G.
\end{equation*}
Thus \corref{one-sided transformation Morita} reduces in this case to
cross-product duality for maximal coactions:
\begin{equation*}
  C^*(\BB)\rtimes_\delta G\rtimes_{\hat\delta} G\sim C^*(\BB).
\end{equation*}
In the even more special case where $\BB=\C\times G$ is the trivial
line bundle, we recover the well-known Morita equivalence
\begin{equation*}
  \KK(L^2(G))\rtimes_{\ad\rho} G\sim C^*(G),
\end{equation*}
where $\rho$ denotes the right regular representation of $G$.

\section{Raeburn's Symmetric Imprimitivity Theorem}

We recover Raeburn's symmetric imprimitivity theorem
\cite[Theorem~1.1]{rae:symmetric} as a corollary to \corref{symmetric
  Morita}:

\begin{cor}
  Suppose that locally compact groups $G$ and $H$ act freely and properly on the left and right, respectively, of a locally
  compact Hausdorff space $X$, and suppose the actions commute. Suppose, also, that $\sigma$ and $\tau$ are
  commuting actions of $G$ and $H$, respectively, on a $C^*$-algebra
  $B$, then we have a Morita equivalence
  \begin{equation*}
  \ind_H^XB\rtimes_{\ind\tau} G\sim \ind_G^XB\rtimes_{\ind\sigma} H.
  \end{equation*}
\end{cor}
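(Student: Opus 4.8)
The plan is to realize the two sides as the ends of the equivalence in \corref{symmetric Morita} for a suitable $C^*$-bundle. We apply that corollary with $\XX=X$ the given space, viewed as a groupoid with only units, and $\AA=B\times X$ the trivial $C^*$-bundle over $X$ (separable and upper-semicontinuous since $B$ is separable and $X$ is second countable), so that $C^*(\AA)=\Gamma_0(\AA)$. Let $G$ act on the left and $H$ on the right of $\AA$ by
\[
t\cdot(b,x)=\bigl(\sigma_t(b),t\cdot x\bigr)\midtext{and}(b,x)\cdot h=\bigl(\tau_h\inv(b),x\cdot h\bigr).
\]
Each is an action by automorphisms of the Fell bundle $\AA$ lying over the given action on $X$; the two commute because $\sigma$ and $\tau$ commute and the actions of $G$ and $H$ on $X$ commute; and they are free and proper because, by the convention recorded in \corref{orbit bundle}, this refers only to the underlying actions on $X$, which are. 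So \corref{symmetric Morita} yields a Morita equivalence
\[
\Gamma_0(\AA/H)\rtimes_\alpha G\sim\Gamma_0(G\under\AA)\rtimes_\beta H ,
\]
where $\alpha$ and $\beta$ are induced by the residual $G$-action on $\AA/H$ and $H$-action on $G\under\AA$ (which exist since the two actions on $\AA$ commute; see \propref{semidirect orbit bundle action}).

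It then remains to identify the four ingredients with those of \cite[Theorem~1.1]{rae:symmetric}. The quotient $\AA/H=(B\times X)/H$ is the associated $C^*$-bundle over $X/H$ with fibre $B$: a continuous section has the form $x\mapsto[(f(x),x)]$ for some continuous $f:X\to B$, well-definedness of $[(f(x),x)]$ along $H$-orbits forces $f(x\cdot h)=\tau_h\inv(f(x))$, and passing to $\Gamma_0$ adds the requirement that $x\mapsto\|f(x)\|$ lie in $C_0(X/H)$; thus $\Gamma_0(\AA/H)\cong\ind_H^XB$. Unwinding the descent of the $G$-action through this isomorphism gives $\alpha_t(f)(x)=\sigma_t\bigl(f(t\inv\cdot x)\bigr)$, the induced $G$-action on $\ind_H^XB$ appearing in Raeburn's theorem. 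By the symmetry between the two sides, likewise $\Gamma_0(G\under\AA)\cong\ind_G^XB$ with $\beta$ the corresponding induced $H$-action. Substituting these four identifications into the displayed equivalence produces the assertion.

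The only substantive step is the last paragraph: one must check that the present paper's Fell-bundle $C^*$-algebra and its associated group action --- built through \thmref{symmetric action}, \cite[Theorem~6.4]{mw:fell} and \cite{kmqw1}, as packaged in \corref{symmetric Morita} --- agree, under the above isomorphisms, with Raeburn's induced algebra $\ind_H^XB$ and induced action, being careful about left-versus-right conventions and about the inverses appearing above. Everything else --- freeness, properness, commutativity of the two actions, and the Morita equivalence itself --- is already handed to us by \corref{symmetric Morita}. I would also note that no groupoids enter ($\XX=X$ is a space and $\AA=B\times X$), so this is exactly the two-sided, $C^*$-bundle specialization of \corref{symmetric Morita} --- a two-sided analogue of \propref{C*-bundle} --- and taking $B=\C$ recovers the purely spatial symmetric imprimitivity theorem.
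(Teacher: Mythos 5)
Your proposal is correct and takes essentially the same route as the paper: the paper also applies \corref{symmetric Morita} to the trivial bundle $\AA=B\times X$ over $\XX=X$ with the same diagonal $G$- and $H$-actions, and concludes via the equivariant identifications $\bigl(\Gamma_0((B\times X)/H),\alpha\bigr)\cong(\ind_H^XB,\ind\tau)$ (using exactly your isomorphism $f\mapsto\bigl(x\cdot H\mapsto (f(x),x)\cdot H\bigr)$) and its mirror image for $G\under(B\times X)$. Your final paragraph merely spells out the equivariance check that the paper dispatches as ``standard theory of induced algebras.''
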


\begin{proof}
  Recall from \cite{rae:symmetric} that we have induced actions
  $\ind\tau$ and $\ind\sigma$ of $G$ and $H$ on the induced
  $C^*$-algebras $\ind_H^X B$ and $\ind_G^X B$, respectively.  We aim
  to apply \thmref{symmetric Morita} with $\AA=B\times X$, $\XX=X$,
  and $G$ and $H$ acting diagonally on $B\times X$ (with the right
  $H$-action given by $(b,x)\cdot h=(\tau_h\inv(b),x\cdot h)$).  The
  orbit Fell bundle $\AA/H\to \XX/H$ is the $C^*$-bundle $(B\times
  X)/H\to X/H$, and similarly for $G\under (B\times X)\to G\under X$.
  Let $\alpha$ and $\beta$ denote the associated actions of $G$ and
  $H$ on $\Gamma_0((B\times X)/H)$ and $\Gamma_0(G\under (B\times
  X))$, respectively.  By \thmref{symmetric Morita} we can finish by
  recalling from the standard theory of induced algebras that there
  are equivariant isomorphsims:
  \begin{align*}
    \bigl(\Gamma_0\bigl((B\times X)/H\bigr),\alpha\bigr) &\cong
    (\ind_H^XB,\ind\tau)
    \\
    \bigl(\Gamma_0(\bigl(G\under (B\times X)\bigr),\beta\bigr) &\cong
    (\ind_G^XB,\ind\sigma).
  \end{align*}
  For example, a suitable isomorphism $\theta:\ind_H^X B\to
  \Gamma_0((B\times X)/H)$ is given by $\theta(f)(x\cdot
  H)=(f(x),x)\cdot H$.
\end{proof}


\begin{appendix}
  \section{Bundle constructions}
  \label{appendix}

  In this appendix, we will review several constructions involving
  Fell bundles over groupoids, some from \cite{kmqw1} and some new,
  that we need in the body of the paper.

  \subsection{Transformation groupoids and bundles}
  \label{sec:transf-group-bundl}

  Suppose a locally compact groupoid $\XX$ acts on the left of a
  locally compact Hausdorff space $\Omega$. Let $\rho:\Omega\to
  \XX^{(0)}$ be the associated fibre map, and let
  \begin{equation*}
    \XX*\Omega=\{(x,u)\in \XX\times \Omega:s(x)=\rho(u)\}
  \end{equation*}
  be the fibre product. Then $\XX*\Omega$ is locally compact
  Hausdorff, and becomes the
  \emph{transformation groupoid} with multiplication
  \begin{equation*}
    (x,y\cdot u)(y,u)=(xy,u).
  \end{equation*}
Thus
\begin{equation*}
  (x,u)\inv=(x\inv,x\cdot u),\quad s(x,u)=(s(x),u)\quad\text{and}
  \quad r(x,u)=(r(x),x\cdot u).
\end{equation*}

\begin{rem*}
  We've made a choice of convention here for the transformation
  groupoid --- another fairly common choice involves writing $\XX$ and
  $\Omega$ in the opposite order.  But we will always use the above
  convention.
\end{rem*}

Note that the coordinate projection $\pi_1:\XX*\Omega\to\XX$ defined
by
\begin{equation*}
  \pi_1(x,u)=x
\end{equation*}
is a groupoid homomorphism.  If now $p:\AA\to\XX$ is a Fell bundle,
then the associated \emph{transformation Fell bundle} has total space
\begin{equation*}
  \AA*\Omega:=\{(a,u)\in \AA\times \Omega:s(a)=\rho(u)\},
\end{equation*}
base groupoid $\XX*\Omega$, bundle projection
\begin{equation*}
  p(a,u)=(p(a),u),
\end{equation*}
multiplication
\begin{equation*}
  (a,p(b)\cdot u)(b,u)=(ab,u),
\end{equation*}
and involution
\begin{equation*}
  (a,u)^*=(a^*,p(a)\cdot u).
\end{equation*}
The easiest way to see that this is a Fell bundle is to note that it
is isomorphic to the pullback \cite[Lemma~1.1]{kmqw1} $\pi_1^*\AA$ via
the map
\begin{equation*}
  (a,u)\mapsto (a,p(a)\cdot u).
\end{equation*}

It is not hard to check that we get a left Haar system on $\XX*\Omega$
via\footnote{Note that the formula in \cite{MuhlyCBMS} for the Haar
  system on the 
transformation groupoid is incorrect.}
\begin{equation*}
  \int_{\XX*\Omega} f(y,v)\,d\lambda^{r(x,u)}(y,v) =\int_\XX f(y,y\inv
  x\cdot u)\,d\lambda^{r(x)}(y).
\end{equation*}

Thus $C_c(\XX*\Omega)$ has convolution and involution given by
\begin{equation*}
  (f*g)(x,u)=\int_\XX f(y,y\inv x \cdot u)g(y\inv
  x,u)\,d\lambda^{r(x)}(y)
\end{equation*}
and
\begin{equation*}
  f^*(x,u)=\bar{f(x\inv,x\cdot u)}.
\end{equation*}

Every section $f\in \Gamma_c(\AA*\Omega)$ is of the form
\begin{equation*}
  f(x,u)=(f_1(x,u),u)
\end{equation*}
for some function $f_1\in C_c(\XX*\Omega,\AA)$ satisfying
\begin{equation*}
  f_1(x,u)\in A(x)\righttext{for}(x,u)\in \XX*\Omega.
\end{equation*}
Convolution and involution in the section algebra
$\Gamma_c(\AA*\Omega)$ are determined by
\begin{equation*}
  (f*g)_1(x,u)=\int_\XX f_1(y,y\inv x\cdot u)g_1(y\inv
  x,u)\,d\lambda^{r(x)}(y)
\end{equation*}
and
\begin{equation*}
  (f^*)_1(x,u)=f_1(x\inv,x\cdot u)^*,
\end{equation*}
and we will simplify the notation by writing $f^*_1$ for $(f^*)_1$.

\subsection{Semidirect products}

Let $G$ be a locally compact group and $\XX$ be a locally compact
groupoid.  Recall from \cite[Section~6]{kmqw1} that an \emph{action by
automorphisms} 
of $G$ on (the left of) $\XX$ is a continuous map
\begin{equation*}
  (s,x)\mapsto s\cdot x:G\times\XX\to\XX
\end{equation*}
such that
\begin{itemize}
\item for each $s\in G$, the map $x\mapsto s\cdot x$ is an
  automorphism of the groupoid $\XX$, and

\item $s\cdot (t\cdot x)=(st)\cdot x$ for all $s,t\in G$ and
  $x\in\XX$,
\end{itemize}
and that the associated \emph{semidirect-product} groupoid $\XX\rtimes
G$ comprises the Cartesian product $\XX\times G$ with multiplication
\begin{equation*}
  (x,s)(y,t)=\bigl(x(s\cdot y),st\bigr) \righttext{if $x$ and $s\cdot y$
    are composable.}
\end{equation*}
Thus the range and source maps are given by
\begin{equation*}
  r(x,t)=(r(x),e)\and s(x,t)=\bigl(t\inv\cdot s(x),e\bigr),
\end{equation*}
and the inverse is given by
\begin{equation*}
  (x,s)\inv=(s\inv\cdot x\inv,s\inv).
\end{equation*}

\begin{warn}
  Frequently we will have a group (or a groupoid) acting on a
groupoid $\XX$ \emph{as a space} --- so then the action on $\XX$ is
just by homeomorphisms, not automorphisms --- and to avoid confusion
we will usually emphasize the particular type of action on $\XX$: $G$
either \emph{acts on $\XX$ by automorphisms} or \emph{acts on the
  space~$\XX$}.
\end{warn}

Recall from \cite[Section~6]{kmqw1} that, when $G$ acts on $\XX$ by
automorphisms, in order to get a Haar system on the semidirect-product
groupoid $\XX\rtimes G$ we need to assume that the action is
\emph{invariant} in the sense that
\begin{equation*}
  \int_\XX f(s\cdot x)\,d\lambda^{u}(x)=\int_\XX f(x)\,d\lambda^{s\cdot
    u}(x),
\end{equation*}
and then
\begin{equation*}
  d\lambda^{(u,e)}(x,s)=d\lambda^u(x)\,ds
\end{equation*}
is a Haar system on $\XX\rtimes G$.

Also recall from \cite{kmqw1} that if $p:\AA\to\XX$ is a Fell bundle,
then an \emph{action} of $G$ on (the left of) $\AA$ (by automorphisms)
consists of an
action of $G$ on $\XX$ and a continuous map $G\times\AA\to\AA$ such
that
\begin{itemize}
\item for each $s\in G$, the map $a\mapsto s\cdot a$ is an
  automorphism of the Fell bundle $\AA$ (which entails $p(s\cdot
  a)=s\cdot p(a)$), and

\item $s\cdot (t\cdot a)=(st)\cdot a$ for all $s,t\in G$ and
  $a\in\AA$,
\end{itemize}
and that the associated \emph{semidirect-product} Fell bundle
$\AA\rtimes G$ comprises the Cartesian product $\AA\times G$ with
bundle projection
\begin{equation*}
  p(a,s)=(p(a),s),
\end{equation*}
multiplication
\begin{equation*}
  (a,s)(b,t)=\bigl(a(s\cdot b),st\bigr) \righttext{if $a$ and $s\cdot b$
    are composable,}
\end{equation*}
and involution
\begin{equation*}
  (a,s)^*=(s\inv\cdot a^*,s\inv).
\end{equation*}

Thus, convolution and involution on $C_c(\XX\rtimes G)$ are given by
\begin{equation*}
  (\phi*\psi)(x,s)=\int_\XX\int_G \phi(y,t)\psi\bigl(t\inv\cdot (y\inv
  x),t\inv s\bigr)\,dt\,d\lambda^{r(x)}(y)
\end{equation*}
and
\begin{equation*}
  \phi^*(x,s)=\bar{\phi(s\inv\cdot x\inv,s\inv)}.
\end{equation*}
If $G$ acts by automorphisms on a Fell bundle $p:\AA\to\XX$ then every
section $f\in \Gamma_c(\AA\rtimes G)$ is of the form
\begin{equation*}
  f(x,s)=(f_1(x,s),s)
\end{equation*}
for some function $f_1\in C_c(\XX\times G,\AA)$ satisfying
\begin{equation*}
  f_1(x,s)\in A(x)\righttext{for}(x,s)\in \XX\times G.
\end{equation*}
Convolution and involution in the section algebra $\Gamma_c(\AA\rtimes
G)$ are determined by
\begin{equation*}
  (f*f')_1(x,s)=\int_\XX\int_G f_1(y,t)f'_1\bigl(t\inv\cdot (y\inv
  x),t\inv s\bigr)\,dt\,d\lambda^{r(x)}(y)
\end{equation*}
and
\begin{equation*}
  f^*_1(x,s)=f_1(s\inv\cdot x\inv,s\inv)^*.
\end{equation*}

Similarly, if $G$ acts on the right of $\XX$ rather than the left, the
semidirect product $G\ltimes\XX$ has multiplication
\begin{equation*}
  (s,x)(t,y)=\bigl(st,(x\cdot t)y\bigr),
\end{equation*}
range and source maps
\begin{equation*}
  r(t,x)=(e,r(x)\cdot t\inv) \and s(t,x)=(e,s(x)),
\end{equation*}
and inverse
\begin{equation*}
  (s,x)\inv=(s\inv,x\inv\cdot s\inv).
\end{equation*}
%

\subsection{Semidirect-product actions}

\begin{defn}
  Let $G$ be a locally compact group, $\XX$ be a locally compact
  groupoid, and $\Omega$ be a locally compact Hausdorff space.
  Suppose $G$ acts on $\XX$ by groupoid automorphisms, and that both
  $G$ and $\XX$ act on $\Omega$, with all actions being on the left.
  We say that the actions of $G$ and $\XX$ on $\Omega$ are
  \emph{covariant} if
  \begin{equation*}
    s\cdot (x\cdot u)=(s\cdot x)\cdot (s\cdot u)\righttext{for
      all}(x,u)\in \XX*\Omega.
  \end{equation*}
  Similarly for covariant right actions.
\end{defn}

\begin{lem}
  \label{semidirect action}
  Let $G$ be a group, $\XX$ be a groupoid, and $\Omega$ be a space.
  Suppose $G$ acts on $\XX$ by automorphisms, and that both $G$ and
  $\XX$ act on $\Omega$, with all actions being on the left.  If the
  actions of $G$ and $\XX$ on $\Omega$ are covariant, then the
  semidirect-product groupoid $\XX\rtimes G$ acts on $\Omega$ by
  \begin{equation}
    \label{semidirect act}
    (x,t)\cdot u=x\cdot (t\cdot u)
    \quad\text{if}\quad s(x)=\rho(t\cdot u),
  \end{equation}
  where $\rho:\Omega\to \XX^{(0)}$ is the associated fibre map.
\end{lem}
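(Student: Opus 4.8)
The plan is to verify directly that the formula \eqref{semidirect act} satisfies the axioms of a groupoid action of $\XX\rtimes G$ on $\Omega$, relative to the fibre map $\rho:\Omega\to\XX^{(0)}$ (which we note is the same map witnessing the $\XX$-action, and that $\rho$ is automatically continuous and open by our standing hypotheses on groupoid actions). First I would record that the unit space of $\XX\rtimes G$ is $\XX^{(0)}\times\{e\}$, identified with $\XX^{(0)}$ via $(u,e)\mapsto u$, and check the anchoring condition: for $(x,t)\in\XX\rtimes G$ and $u\in\Omega$, the expression $(x,t)\cdot u=x\cdot(t\cdot u)$ is defined exactly when $s(x)=\rho(t\cdot u)$, which is the condition $s(x,t)=\rho(u)$ after unwinding that $s(x,t)=(t\inv\cdot s(x),e)$ and using that the $G$-action on $\XX^{(0)}$ intertwines $\rho$ with the $G$-action on $\Omega$ (this last point follows from covariance restricted to units, or is part of the hypothesis that $G$ acts on $\Omega$ over its action on $\XX^{(0)}$). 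I would also check $r((x,t)\cdot u)=\rho((x,t)\cdot u)$ comes out to $r(x)$, matching $r(x,t)=(r(x),e)$.

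Next comes the unit axiom and the associativity (composition) axiom. For units, $(r(x),e)$ acting as the identity on the appropriate fibre is immediate from $e\cdot u=u$ and $r(x)\cdot u=u$ when $\rho(u)=r(x)$, since the $\XX$-action has its own unit axiom. The main computation is associativity: given composable $(x,t),(y,s)\in\XX\rtimes G$ — so that $s(x)=r(t\cdot y)$, making $(x,t)(y,s)=(x(t\cdot y),ts)$ — and $u\in\Omega$ with the source condition allowing $(y,s)\cdot u$ to act, I must show
\begin{equation*}
  \bigl((x,t)(y,s)\bigr)\cdot u=(x,t)\cdot\bigl((y,s)\cdot u\bigr).
\end{equation*}
Expanding the left side gives $\bigl(x(t\cdot y)\bigr)\cdot(ts\cdot u)$, and the right side gives $x\cdot\bigl(t\cdot(y\cdot(s\cdot u))\bigr)$. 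Using that the $G$-action is by automorphisms, $t\cdot(y\cdot w)=(t\cdot y)\cdot(t\cdot w)$ for $w=s\cdot u$ by the covariance hypothesis, so the right side becomes $x\cdot\bigl((t\cdot y)\cdot(ts\cdot u)\bigr)$, and then the associativity of the $\XX$-action alone finishes it, provided the intermediate source/range conditions line up — which they do because $G$ acts by automorphisms (preserving $s,r$ up to the $G$-action on $\XX^{(0)}$) and the actions on $\Omega$ are covariant.

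The step I expect to require the most care is bookkeeping the domain conditions: one must confirm at each stage that the partial products being written down are actually defined, i.e.\ that the relevant source/range/fibre-map equalities hold, and this is where covariance is used repeatedly and where it is easy to slip. Continuity of the action map $\XX*_\rho\Omega\to\Omega$, $((x,t),u)\mapsto x\cdot(t\cdot u)$, is routine: it is a composite of the given continuous $G$- and $\XX$-actions on $\Omega$ together with continuous coordinate maps, so I would dispatch it in a single sentence. No deep obstacle is anticipated; the lemma is, as the remark following \lemref{groupoid equivalence} suggests for its cousin, a bookkeeping exercise, and the whole proof should fit in well under a page.
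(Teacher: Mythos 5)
Your proposal is correct and follows essentially the same route as the paper: one identifies the unit space of $\XX\rtimes G$ with $\XX^{(0)}$ via $\rho'(u)=(\rho(u),e)$, checks that the domain condition $s(x,t)=\rho'(u)$ unwinds (using $G$-equivariance of $\rho$, which follows from covariance) to $s(x)=\rho(t\cdot u)$, and then verifies the action axioms and continuity directly. The paper simply declares the axiom checks (including the covariance-based associativity you spell out) to be routine, so your write-up just supplies detail the paper omits.
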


\begin{proof}
  Define
  \begin{equation*}
    \rho':\Omega\to (\XX\rtimes G)^{(0)}=\XX^{(0)}\times \{e\}
  \end{equation*}
  by
  \begin{equation*}
    \rho'(u)=(\rho(u),e).
  \end{equation*}
  Then $\rho'$ is clearly continuous and open.  Moreover
  \begin{equation*}
    s(x,t)=\rho'(u) \midtext{if and only if} s(x)=\rho(t\cdot u),
  \end{equation*}
  so the definition \eqref{semidirect act} of $(x,t)\cdot u$ is
  well-defined.  It is routine to check the axioms for an action, even
  the continuity: if
  \begin{equation*}
    \bigl((x_i,t_i),u_i\bigr)\to \bigl((x,t),u\bigr)
    \righttext{in}(\XX\rtimes )*\Omega,
  \end{equation*}
  then
  \begin{align*}
    (x_i,t_i)\cdot u_i =x_i\cdot (t_i\cdot u_i) \to x\cdot (t\cdot u)
    =(x,t)\cdot u
  \end{align*}
  because $x_i\to x$ and $t_i\cdot u_i\to t\cdot u$.
\end{proof}

For convenient reference, we record the corresponding result for right
actions:

\begin{cor}
  \label{semidirect right action}
  If a group $H$ acts on a groupoid $\XX$, and $H$ and $\XX$ act
  covariantly on a space $\Omega$, all actions being on the right,
  then $H\ltimes \XX$ acts on the right of $\Omega$ by
  \begin{equation*}
    u\cdot (h,x)=(u\cdot h)\cdot x \quad\text{if}\quad\sigma(u)=r(x\cdot
    h\inv),
  \end{equation*}
  where $\sigma:\Omega\to\XX^{(0)}$ is the associated fibre map.
\end{cor}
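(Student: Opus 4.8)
The plan is to mimic the proof of \lemref{semidirect action}, systematically interchanging left with right and the range map with the source map; formally one could instead apply \lemref{semidirect action} after passing to the opposite groupoids, but the direct translation is more transparent. First I would propose the fibre map
\begin{equation*}
  \sigma':\Omega\to (H\ltimes\XX)^{(0)}=\{e\}\times\XX^{(0)},\qquad
  \sigma'(u)=(e,\sigma(u)),
\end{equation*}
which is continuous and open since $\sigma$ is. Using the formulas $r(h,x)=(e,r(x)\cdot h\inv)$ and $s(h,x)=(e,s(x))$ for $H\ltimes\XX$, and the fact that $H$ acts on $\XX$ by automorphisms (so $r(x)\cdot h\inv=r(x\cdot h\inv)$), one sees that $r(h,x)=\sigma'(u)$ if and only if $\sigma(u)=r(x\cdot h\inv)$. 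As in the left-handed case, the $H$-equivariance $\sigma(u\cdot h)=\sigma(u)\cdot h$ of the fibre map (implicit in the covariance hypothesis) then shows that, under this condition, $(u\cdot h)\cdot x$ is a legitimate product in the $\XX$-action, so that $u\cdot(h,x):=(u\cdot h)\cdot x$ is defined exactly on the fibre product $(H\ltimes\XX)*\Omega$, and moreover $\sigma'\bigl(u\cdot(h,x)\bigr)=s(h,x)$.

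Next I would check the action axioms. The unit axiom is immediate, since the unit of $H\ltimes\XX$ lying over $\sigma'(u)$ is $(e,\sigma(u))$ and $u\cdot(e,\sigma(u))=(u\cdot e)\cdot\sigma(u)=u$. For compatibility with multiplication, given composable $(h,x),(k,y)$ in $H\ltimes\XX$, so that $(h,x)(k,y)=\bigl(hk,(x\cdot k)y\bigr)$, I would expand
\begin{equation*}
  u\cdot\bigl((h,x)(k,y)\bigr)=\Bigl(\bigl((u\cdot h)\cdot k\bigr)\cdot(x\cdot k)\Bigr)\cdot y
\end{equation*}
using the $H$-action and the associativity of the $\XX$-action, and then apply covariance of the $H$- and $\XX$-actions on $\Omega$ to rewrite $\bigl((u\cdot h)\cdot k\bigr)\cdot(x\cdot k)=\bigl((u\cdot h)\cdot x\bigr)\cdot k$; the right-hand side then collapses to $\bigl((u\cdot h)\cdot x\bigr)\cdot(k,y)=\bigl(u\cdot(h,x)\bigr)\cdot(k,y)$. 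This single invocation of covariance is the only point of substance — it is precisely the right-handed mirror of the step passed over as routine in \lemref{semidirect action}.

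Continuity of the action then follows as in \lemref{semidirect action}: if $\bigl((h_i,x_i),u_i\bigr)\to\bigl((h,x),u\bigr)$ in $(H\ltimes\XX)*\Omega$ then $u_i\cdot h_i\to u\cdot h$ by continuity of the $H$-action and $x_i\to x$ with the fibre condition preserved, whence $u_i\cdot(h_i,x_i)=(u_i\cdot h_i)\cdot x_i\to(u\cdot h)\cdot x=u\cdot(h,x)$ by continuity of the $\XX$-action on $\Omega$. The only real obstacle here is organizational: one must keep the conventions for $H\ltimes\XX$, for right groupoid actions, and for the fibre maps $\sigma$ and $\sigma'$ consistent so that every composability condition lines up correctly; there is no analytic content beyond what \lemref{semidirect action} already supplies.
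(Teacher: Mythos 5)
Your argument is correct and coincides with the paper's: the paper offers no separate proof of this corollary, recording it simply as the left--right mirror of \lemref{semidirect action}, and your proposal is exactly that mirrored argument (with the covariance step, which the lemma's proof leaves as routine, written out explicitly and correctly).
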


\begin{defn}
  Let $G$ be a group, let $p:\AA\to\XX$ be a Fell bundle over a
  groupoid, and let $q:\EE\to \Omega$ be a Banach bundle.  Suppose $G$
  acts on $\AA$ by Fell-bundle automorphisms, and that both $G$ and
  $\AA$ act on $\EE$ (with $G$ acting by isometric isomorphisms, and
  $\AA$ acting as in \cite{mw:fell}), with all actions being on the
  left.  We say that the actions of $G$ and $\AA$ on $\EE$ are
  \emph{covariant} if
  \begin{equation*}
    s\cdot (a\cdot e)=(s\cdot a)\cdot (s\cdot e)
    \quad\text{if}\quad s(a)=\rho(q(e)).
  \end{equation*}
\end{defn}

\begin{cor}
  \label{semidirect bundle action}
  Let $G$ be a group, let $p:\AA\to\XX$ be a Fell bundle over a
  groupoid, and let $q:\EE\to \Omega$ be a Banach bundle.  Suppose $G$
  acts on $\AA$ by Fell-bundle automorphisms, and that both $G$ and
  $\AA$ act on $\EE$ \($G$ acting by isometric isomorphisms and $\AA$
  acting as in \cite{mw:fell}\), with all actions being on the left.
  If the actions of $G$ and $\AA$ on $\EE$ are covariant, then the
  semidirect-product Fell bundle $\AA\rtimes G$ acts on $\EE$ by
  \begin{equation}
    \label{semidirect bundle act}
    (a,t)\cdot e=a\cdot (t\cdot e)
    \quad\text{if}\quad s(a)=\rho(q(t\cdot e)).
  \end{equation}
\end{cor}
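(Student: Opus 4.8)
The plan is to proceed exactly as in the proof of \lemref{semidirect action}, promoting that space-level argument to the level of bundles. First I would observe that the underlying actions of $G$ on $\XX$ and of $G$ and $\XX$ on $\Omega$ are covariant in the sense of the earlier definition: applying the projection $q$ to the covariance identity $s\cdot(a\cdot e)=(s\cdot a)\cdot(s\cdot e)$ and using that the $G$-action on $\EE$ covers the $G$-action on $\Omega$ while the $\AA$-action on $\EE$ covers the $\XX$-action on $\Omega$, one gets $s\cdot(p(a)\cdot u)=(s\cdot p(a))\cdot(s\cdot u)$. Hence \lemref{semidirect action} already supplies the action of $\XX\rtimes G$ on $\Omega$ underlying the claimed action, and all of the bookkeeping about fibre maps is in place: with $\rho':\Omega\to(\XX\rtimes G)^{(0)}=\XX^{(0)}\times\{e\}$ defined by $\rho'(u)=(\rho(u),e)$, we have $s(a,t)=\rho'(q(e))$ if and only if $s(a)=\rho(q(t\cdot e))$, which is precisely the domain condition appearing in \eqref{semidirect bundle act}. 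So the formula $(a,t)\cdot e=a\cdot(t\cdot e)$ is unambiguous on $(\AA\rtimes G)*\EE$.

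Next I would check the axioms for an action of a Fell bundle on a Banach bundle from \cite{mw:fell}. Compatibility with the bundle projections follows from $q\bigl((a,t)\cdot e\bigr)=q\bigl(a\cdot(t\cdot e)\bigr)$ together with the corresponding axiom for the $\AA$-action, which gives $\rho\bigl(q(a\cdot(t\cdot e))\bigr)=r(p(a))$, so that $\rho'\bigl(q((a,t)\cdot e)\bigr)=(r(p(a)),e)=r(p(a,t))$. For the module identity, I must show that whenever $(a,t)\cdot\bigl((b,s)\cdot e\bigr)$ is defined, so is $\bigl((a,t)(b,s)\bigr)\cdot e$, and the two agree. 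Tracing the domain conditions, $(b,s)\cdot e=b\cdot(s\cdot e)$ is defined when $s(b)=\rho(q(s\cdot e))$, and then $(a,t)\cdot\bigl(b\cdot(s\cdot e)\bigr)$ is defined when $s(a)=r\bigl(p(t\cdot b)\bigr)$ --- using $\rho(q(b\cdot(s\cdot e)))=r(p(b))$ and that $G$ acts by automorphisms, so $t\cdot r(p(b))=r(p(t\cdot b))$ --- which is exactly the condition for the product $(a,t)(b,s)=\bigl(a(t\cdot b),ts\bigr)$ to be defined. The equality itself is where covariance does its work:
\begin{align*}
  (a,t)\cdot\bigl((b,s)\cdot e\bigr)
  &=a\cdot\bigl(t\cdot(b\cdot(s\cdot e))\bigr)
  =a\cdot\bigl((t\cdot b)\cdot(t\cdot(s\cdot e))\bigr)\\
  &=a\cdot\bigl((t\cdot b)\cdot((ts)\cdot e)\bigr)
  =\bigl(a(t\cdot b)\bigr)\cdot\bigl((ts)\cdot e\bigr)
  =\bigl((a,t)(b,s)\bigr)\cdot e,
\end{align*}
using covariance in the second equality, $t\cdot(s\cdot e)=(ts)\cdot e$ in the third, and associativity of the $\AA$-action in the fourth. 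The norm inequality is immediate: $\|(a,t)\cdot e\|=\|a\cdot(t\cdot e)\|\le\|a\|\,\|t\cdot e\|=\|a\|\,\|e\|=\|(a,t)\|\,\|e\|$, since $G$ acts on $\EE$ by isometries and $\|(a,t)\|=\|a\|$ in $\AA\rtimes G$.

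Finally, continuity is argued exactly as in \lemref{semidirect action}: if $\bigl((a_i,t_i),e_i\bigr)\to\bigl((a,t),e\bigr)$ in $(\AA\rtimes G)*\EE$, then $a_i\to a$, $t_i\to t$, and $e_i\to e$, so $t_i\cdot e_i\to t\cdot e$ by continuity of the $G$-action on $\EE$; since $s(a_i)=\rho(q(t_i\cdot e_i))$ we have $(a_i,t_i\cdot e_i)\in\AA*\EE$, and continuity of the $\AA$-action on $\EE$ gives $(a_i,t_i)\cdot e_i=a_i\cdot(t_i\cdot e_i)\to a\cdot(t\cdot e)=(a,t)\cdot e$. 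Any remaining structural requirement in the definition of action in \cite{mw:fell} (such as nondegeneracy) is inherited from the $\AA$-action together with the fact that $G$ acts by bundle isomorphisms. I do not anticipate a genuine obstacle here; the only point needing care is keeping the chain of fibre and composability conditions consistent at each step of the module identity, which, as in \lemref{semidirect action}, is handled precisely by the covariance hypothesis.
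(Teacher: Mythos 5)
Your proposal is correct and follows essentially the same route as the paper: invoke \lemref{semidirect action} for the underlying action of $\XX\rtimes G$ on $\Omega$, match the fibre-map condition $s(p(a,t))=\rho'(q(e))$ with $s(a)=\rho(q(t\cdot e))$, and then verify the Fell-bundle action axioms. The paper simply declares that verification routine, whereas you carry it out (correctly, with covariance doing the work in the module identity), so the content agrees.
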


\begin{proof}
  Recall from \lemref{semidirect action} that $\XX\rtimes G$ acts on
  the space $\Omega$, and that the associated fiber map
  $\rho':\Omega\to \{e\}\times\XX^{(0)}$ is
  \begin{equation*}
    \rho'(u)=(\rho(u),e),
  \end{equation*}
  where $\rho:\Omega\to\XX^{(0)}$ is the fibre map associated to the
  action of $\XX$ on $\Omega$.  We have
  \begin{equation*}
    s(p(a,t))=\rho'(q(e))
  \end{equation*}
  if and only if
  \begin{equation*}
    s(a)=\rho(q(t\cdot e)).
  \end{equation*}
  It is routine to check the axioms for an action of a Fell bundle on
  a Banach bundle.
\end{proof}

\subsection{Quotients}

We want to know that if a group $H$ acts freely by automorphisms on a
groupoid $\XX$, then the orbit space $\XX/H$ is a groupoid, and
similarly if $H$ acts on a Fell bundle $\AA$ over $\XX$.

Of course we want all this to be topological.  Thus we take $H$ and
$\XX$ to be locally compact Hausdorff.  To ensure that $\XX/H$ also
has these properties, we require that the action be free and proper.

\begin{prop}
  \label{orbit groupoid}
  Let $H$ be a locally compact group and $\XX$ a locally compact
  Hausdorff groupoid.  Suppose that $H$ acts freely and properly on
  the right of $\XX$ by automorphisms.  Then the orbit space $\XX/H$
  becomes a locally compact Hausdorff groupoid, called an \emph{orbit
    groupoid} or a \emph{quotient groupoid}, with multiplication
  \begin{equation}
    \label{orbit multiply}
    (x\cdot H)(y\cdot H)=(xy)\cdot H
    \righttext{whenever}s(x)=r(y).
  \end{equation}

  Moreover, if the action of $H$ on $\XX$ is invariant in the sense of
  \cite[Definition~6.3]{kmqw1}, then there is a Haar system
  $\dot\lambda$ on $\XX/H$ given by
  \begin{equation}\label{Haar}
    \int_{\XX/H} f(x\cdot H)\,d\dot\lambda^{u\cdot H}(x\cdot H)
    =\int_\XX f(x\cdot H)\,d\lambda^u(x).
  \end{equation}
\end{prop}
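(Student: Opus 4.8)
The plan is to build the groupoid structure and the Haar system on $\XX/H$ by pushing the corresponding data on $\XX$ down along the orbit map $q\colon\XX\to\XX/H$; freeness and properness are precisely the hypotheses that make this descent well posed. Since $H$ acts freely and properly, $\XX/H$ is locally compact Hausdorff and $q$ is a continuous open surjection --- the standard fact about quotients by free proper actions of locally compact groups (see \cite{mrw}) --- and the same applies to the restricted action on the closed invariant subset $\XX^{(0)}$, giving the unit space $\XX^{(0)}/H$. Because $H$ acts by \emph{automorphisms}, the structure maps of $\XX$ are $H$-equivariant: $r$ and $s$ intertwine the action on $\XX$ with the action on $\XX^{(0)}$, inversion $x\mapsto x\inv$ is equivariant, and multiplication $m\colon\XX^{(2)}\to\XX$ intertwines the \emph{diagonal} action on $\XX^{(2)}=\{(x,y):s(x)=r(y)\}$ with the action on $\XX$. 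Hence $r,s$, inversion and the unit inclusion factor through $q$ to continuous maps $\dot r,\dot s\colon\XX/H\to\XX^{(0)}/H$, together with descended inversion and unit maps.

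For the multiplication, the diagonal $H$-action on $\XX^{(2)}$ is again free and proper ($\XX^{(2)}$ is closed and invariant in $\XX\times\XX$), so $\XX^{(2)}/H$ is locally compact Hausdorff. The key point is that the natural continuous bijection $\XX^{(2)}/H\to(\XX/H)^{(2)}$, $(x,y)\cdot H\mapsto(x\cdot H,y\cdot H)$, is a homeomorphism: it is surjective because, given $\dot s(x\cdot H)=\dot r(y\cdot H)$, one may replace $y$ by a unique element of its $H$-orbit so that $s(x)=r(y)$; it is injective because if $(x,y),(x',y')\in\XX^{(2)}$ lie over the same pair of orbits then the group element carrying $x$ to $x'$ must equal the one carrying $y$ to $y'$, both being forced by freeness at $s(x)=r(y)$; and its inverse is continuous because the ``division map'' of a free proper action is continuous (equivalently, $q$ is a topological principal $H$-bundle), which lets one continuously recover from a composable pair of orbits the unique representative pair $(x,y)$ with $s(x)=r(y)$. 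Transporting $m$ through this identification produces a continuous multiplication on $\XX/H$ satisfying \eqref{orbit multiply} by construction, and the groupoid axioms descend from those of $\XX$ by applying $q$; this establishes the first assertion.

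For the Haar system the crucial observation is that freeness makes $q$ restrict to a \emph{bijection} of the range fibre $\XX^u=r\inv(u)$ onto $(\XX/H)^{u\cdot H}:=\dot r\inv(u\cdot H)$ --- two points of $\XX^u$ in one $H$-orbit would force $u\cdot h=u$, hence $h=e$, and every $H$-orbit meeting $r\inv(u\cdot H)$ meets $\XX^u$ --- while properness makes this bijection \emph{proper}, hence a homeomorphism; concretely, for compact $K\subseteq\XX/H$ one picks a compact $\tilde K\subseteq\XX$ with $q(\tilde K)\supseteq K$, so $q\inv(K)\subseteq\tilde K\cdot H$, and properness of the $H$-action on $\XX^{(0)}$ then confines the relevant group elements to a compact set. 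I would then \emph{define} $\dot\lambda^{u\cdot H}$ to be the pushforward $(q|_{\XX^u})_*\lambda^u$, which is exactly \eqref{Haar}; the defining integral is finite because $q\inv(\operatorname{supp}f)\cap\XX^u$ is compact for $f\in C_c(\XX/H)$. Independence of $\dot\lambda^{u\cdot H}$ from the representative $u$ is where \emph{invariance} of $\lambda$ is used: for $u'=u\cdot h$, invariance rewrites $\int_\XX(f\circ q)\,d\lambda^{u\cdot h}$ as $\int_\XX(f\circ q)(x\cdot h)\,d\lambda^u(x)$, and this equals $\int_\XX(f\circ q)\,d\lambda^u$ because $q(x\cdot h)=q(x)$. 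Continuity of $u\cdot H\mapsto\dot\lambda^{u\cdot H}(f)$ reduces to continuity of $u\mapsto\lambda^u(f\circ q)$; since $f\circ q$ need not be compactly supported, I would fix a compact neighbourhood $Q$ of the relevant unit, note (again by properness) that $q\inv(\operatorname{supp}f)\cap r\inv(Q)$ is compact, multiply $f\circ q$ by a cutoff $\chi\in C_c(\XX)$ equal to $1$ there, and invoke the continuity of $\lambda$. Left-invariance of $\dot\lambda$ then follows from left-invariance of $\lambda$ together with the fibrewise identification $\XX^u\cong(\XX/H)^{u\cdot H}$.

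I expect the main obstacle to be this properness bookkeeping: showing that $H$-saturated sets such as $q\inv(K)$ meet the fibres $\XX^u$, and the sets $r\inv(Q)$, in compact sets. This one input simultaneously makes the fibrewise pushforward definition of $\dot\lambda^{u\cdot H}$ legitimate, upgrades the bijection $\XX^u\to(\XX/H)^{u\cdot H}$ to a homeomorphism, and gives the continuity of $\dot\lambda$. By contrast, the identification $\XX^{(2)}/H\cong(\XX/H)^{(2)}$ and the descent of the groupoid axioms are routine once the principal-bundle/division-map fact is in hand.
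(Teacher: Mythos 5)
Your construction is correct, but it takes a partly different route from the paper, and it is worth seeing where the two diverge. For the groupoid structure, the paper does not pass through the identification $\XX^{(2)}/H\cong(\XX/H)^{(2)}$; it verifies continuity of multiplication and inversion directly, by lifting convergent nets through the open quotient map $\XX\to\XX/H$ (after passing to subnets) and then invoking an elementary lemma proved for the purpose (\lemref{elementary}: if $u_i\to u$ and $u_i\cdot h_i\to u$ for a free and proper action, then $h_i\to e$) to correct the lifted representatives into composable position. Your transport-of-structure argument is cleaner to state, but be aware that the continuity of the inverse of the map $\XX^{(2)}/H\to(\XX/H)^{(2)}$ is exactly where that same net argument is hiding: since the principal bundle $q$ is \emph{not} assumed locally trivial, there is no continuous choice of a composable representative pair, so the phrase ``continuously recover the representative pair'' should be replaced by the subnet argument (openness of $q$ together with the division-map fact, i.e.\ \lemref{elementary}); with that reading your step is sound, and the paper proves this ingredient rather than citing it. For the Haar system the divergence is more substantial: the paper simply observes that $r\colon\XX\to\XX^{(0)}$ is a continuous open $H$-equivariant surjection and that invariance makes $\lambda$ an equivariant $r$-system, and then quotes Renault's Lemme~1.3 of \cite{ren:representation} for the existence of $\dot\lambda$; you instead build $\dot\lambda^{u\cdot H}$ by hand as the pushforward of $\lambda^u$ along the bijection $\XX^u\to(\XX/H)^{u\cdot H}$ (a homeomorphism by properness of the action on $\XX^{(0)}$), checking independence of the representative via invariance, finiteness and full support via that homeomorphism, continuity via the cutoff argument on $q\inv(\operatorname{supp}f)\cap r\inv(Q)$, and left invariance from that of $\lambda$. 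This is more self-contained --- in effect you reprove the special case of Renault's lemma that is needed --- and it has the advantage of exhibiting formula \eqref{Haar} directly rather than as an identification after the fact; the paper's appeal to the citation is shorter but less explicit. No genuine gap beyond the representative-pair phrasing noted above.
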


\begin{proof}
  Because $H$ acts freely, if $s(x)\cdot H=r(y)\cdot H$ then there is
  a unique $h\in H$ such that $s(x\cdot h)=r(y)$. But a moment's
  though reveals that somewhat more is true: the set
  \begin{equation*}
    \{zw:z\in x\cdot H,w\in y\cdot H,s(z)=r(w)\}
  \end{equation*}
  comprises a single orbit, represented by any such product $zw$. In
  particular, if we choose $x$ within its orbit $x\cdot H$ so that
  $s(x)=r(y)$, then the above set of products coincides with the orbit
  $(xy)\cdot H$.  Thus, replacing $x$ by $x\cdot h$ we see that the
  multiplication \eqref{orbit multiply} is well-defined.  It is
  routine to check that the formula \eqref{orbit multiply} does make
  $\XX/H$ into a groupoid, and it is locally compact and Hausdorff
  because $H$ acts properly. It remains to verify continuity of
  multiplication and inversion in $\XX/H$.

  Let
  \begin{equation*}
    x_i\cdot H\to x\cdot H\and y_i\cdot H\to y\cdot H
    \righttext{in}\XX/H,
  \end{equation*}
  and assume that
  \begin{equation*}
    s(x_i\cdot H)=r(y_i\cdot H)\all i.
  \end{equation*}
  We will show that $s(x\cdot H)=r(y\cdot H)$ and
  \begin{equation}
    \label{product converge}
    (x_i\cdot H)(y_i\cdot H)\to (x\cdot H)(y\cdot H).
  \end{equation}
  It suffices to show that every subnet has a subnet satisfying
  \eqref{product converge}, and since any subnet will continue satisfy
  our hypotheses, it suffices to show that some subnet satisfies
  \eqref{product converge}.

  Note that the range and source maps on $\XX/H$ are continuous and
  open, since the range and source maps on $\XX$ are continuous and
  open, as is the quotient map $\XX\to \XX/H$.  Thus we have $s(x\cdot
  H)=r(y\cdot H)$.  If necessary, replace $x$ within its orbit $x\cdot
  H$ so that $s(x)=r(y)$.  However, we will \emph{not} make
  corresponding choices at this time for the $x_i$'s; rather, this
  will arise from other considerations in our argument.

  Since the quotient map $\XX\to\XX/H$ is open, after passing to
  subnets and relabeling we may, if necessary, replace each $x_i$ by
  another element of its orbit $x_i\cdot H$ so that we have $ x_i\to
  x\righttext{in}\XX$, and similarly we can assume that $y_i\to y$.
  For each $i$ there is a unique $h_i\in H$ such that $ s(x_i)\cdot
  h_i=r(y_i)$.  Then
  \begin{equation*}
    s(x_i)\cdot h_i\to r(y)=s(x),
  \end{equation*}
  and of course by continuity we also have $ s(x_i)\to s(x)$.  By the
  elementary \lemref{elementary} below we conclude that $h_i\to e$ in
  $H$. Thus, replacing $x_i$ by $x_i\cdot h_i$, we now have
  $
    s(x_i)=r(y_i)$ for all $i $,
  and we still have $x_i\to x$, and hence
  \begin{align*}
    (x_i\cdot H)(y_i\cdot H) =(x_iy_i)\cdot H \to (xy)\cdot H
    =(x\cdot H)(y\cdot H).
  \end{align*}

  Continuity of inversion is now easy: if $x_i\cdot H\to x\cdot H$ in
  $\XX/H$, then as above we can pass to a subnet (which suffices, as
  before) so that $x_i\to x$, and then
  \begin{equation*}
    (x_i\cdot H)\inv=x_i\inv\cdot H\to x\inv\cdot H=(x\cdot H)\inv.
  \end{equation*}

  Finally, assume that the action of $H$ on $\XX$ is invariant; since
  $H$ acts on the right instead of the left here, we note explicitly
  that invariance in this case means
  \begin{equation}
    \label{invariant}
    \int_\XX f(x\cdot h)\,d\lambda^u(x)
    =\int_\XX f(x)\,d\lambda^{u\cdot h}(x).
  \end{equation}
  To show that \eqref{Haar} gives a left Haar system, note that the
  range map $r:\XX\to \XX^{(0)}$ is a continuous and open surjection
  that is $H$-equivariant, the left Haar system $\lambda$ on $\XX$ is
  an $r$-system, and by \eqref{invariant} it is $H$-equivariant in
  Renault's sense \cite{ren:representation}.  Thus the existence of a
  Haar system on $\XX/H$ is guaranteed by
  \cite[Lemme~1.3]{ren:representation}.
\end{proof}

In the above proof we appealed to the following general lemma:

\begin{lem}
  \label{elementary}
  Let a locally compact group $H$ act freely and properly on the right
  of a locally compact Hausdorff space $\Omega$.  Suppose that we have
  $u\in \Omega$ and nets $\{u_i\}$ in $\Omega$ and $\{h_i\}$ in $H$
  such that $
    u_i\to u$ and $u_i\cdot h_i\to u$.
  Then $h_i\to e$.
\end{lem}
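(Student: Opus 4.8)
The plan is to use properness of the action together with the subnet criterion for convergence: a net $\{h_i\}$ in a topological space converges to $e$ if and only if every subnet of $\{h_i\}$ has a further subnet converging to $e$. So it suffices to fix an arbitrary subnet of $\{h_i\}$ which, together with the corresponding subnet of $\{u_i\}$, still satisfies the hypotheses of the lemma, and then to produce a further subnet along which $h_i \to e$. Relabelling, we may as well assume we are working with the nets $\{u_i\}$ and $\{h_i\}$ themselves.

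Recall that properness of the right action of $H$ on $\Omega$ means that the map
\[
  \Phi:\Omega\times H\to \Omega\times\Omega, \qquad \Phi(v,h)=(v\cdot h,v),
\]
is proper, i.e.\ the preimage under $\Phi$ of every compact subset of $\Omega\times\Omega$ is compact. Since $u_i\to u$ and $u_i\cdot h_i\to u$, the net $\Phi(u_i,h_i)=(u_i\cdot h_i,u_i)$ converges to $(u,u)$ in $\Omega\times\Omega$; choosing a compact neighbourhood $K$ of $(u,u)$, the points $(u_i\cdot h_i,u_i)$ lie eventually in $K$, so the points $(u_i,h_i)$ lie eventually in the compact set $\Phi^{-1}(K)\subseteq\Omega\times H$. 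Passing to a subnet, we may therefore assume $(u_i,h_i)\to (v,h)$ for some $(v,h)\in\Omega\times H$.

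It remains to identify the limit. On the one hand $u_i\to u$ and on the other $u_i\to v$, so $v=u$ because $\Omega$ is Hausdorff. By continuity of the action, $u_i\cdot h_i\to u\cdot h$; but by hypothesis $u_i\cdot h_i\to u$, so $u\cdot h=u$, and since $H$ acts freely this forces $h=e$. Hence along this subnet $h_i\to e$, which is what we needed; therefore $h_i\to e$ in $H$.

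The argument is essentially bookkeeping, and the only point requiring any care is the reduction to subnets: properness yields compactness, hence convergent subnets, but not convergence of the original net, and we must work with nets rather than sequences since $\Omega$ and $H$ need not be metrizable. There is no genuine obstacle beyond invoking the correct formulation of ``free and proper action.''
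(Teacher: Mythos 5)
Your proof is correct and follows essentially the same route as the paper's: both reduce to finding a convergent subnet of $\{h_i\}$ via properness (you phrase properness as properness of the map $(v,h)\mapsto(v\cdot h,v)$, while the paper uses the equivalent compactness of $\{h\in H: v\cdot h\in K\text{ for some }v\in K\}$ for a compact neighborhood $K$ of $u$), and then identify the limit as $e$ using continuity and freeness. Your explicit appeal to the ``every subnet has a further subnet'' criterion is just a slightly more careful rendering of the paper's opening reduction.
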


\begin{proof}
  It suffices to show that some subnet of $\{h_i\}$ converges to $e$.
  Choose a compact neighborhood $K$ of $u$.  After passing to subnets
  and relabeling, without loss of generality we may assume that
  \begin{equation*}
    u_i\in K\and u_i\cdot h_i\in K\all i.
  \end{equation*}
  Since $H$ acts properly, the set
  \begin{equation*}
    \{(v,h)\in \Omega\times H:v\in K\text{ and }v\cdot h\in K\}
  \end{equation*}
  is compact, and hence the set
  \begin{equation*}
    \{h\in H:v\cdot h\in K\text{ for some }v\in K\}
  \end{equation*}
  is also compact. Thus all the $h_i$'s lie in a compact set, so again
  by passing to subnets and relabeling we may assume that the net
  $\{h_i\}$ converges, say $h_i\to h$ in $H$. Then we have
  \begin{align*}
    u\cdot h =\lim u_i\cdot h_i =u,
  \end{align*}
  so we must have $h=e$, again by freeness.
\end{proof}

\begin{cor}\label{orbit bundle}
  Let $p:\AA\to\XX$ be a Fell bundle over a locally compact Hausdorff
  groupoid, and let $H$ be a locally compact group.  Suppose $H$ acts
  freely and properly by automorphisms on the right of $\AA$ \(by
  which we mean that the associated action of $H$ on the groupoid
  $\XX$ has these properties\).  Then the orbit space $\AA/H$ becomes
  a Fell bundle over the orbit groupoid $\XX/H$, called an \emph{orbit
    Fell bundle} or a \emph{quotient Fell bundle}, with operations
  \begin{compactenum}
  \item $p(a\cdot H)=p(a)\cdot H$;

  \item $(a\cdot H)(b\cdot H)=(ab)\cdot H$ if $s(a)=r(b)$;

  \item $(a\cdot H)^*=a^*\cdot H$.
  \end{compactenum}
\end{cor}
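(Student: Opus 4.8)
The plan is to bootstrap from \propref{orbit groupoid}, which already supplies the orbit groupoid $\XX/H$ (and, under the invariance hypothesis, its Haar system), so that the only remaining task is to put a Fell-bundle structure on $\AA/H$ over it. First I would pin down the fibres and the Banach-bundle topology. Because $H$ acts freely on $\XX$, for a fixed $x$ the orbit map $a\mapsto a\cdot H$ restricts to a bijection of $A(x)$ onto the set of orbits lying over $x\cdot H$, and if $x'=x\cdot h$ then the isometric isomorphism $A(x)\to A(x')$, $a\mapsto a\cdot h$, is compatible with these bijections; this equips each fibre $A(x\cdot H)$ with a well-defined Banach-space structure and an isometric identification $A(x\cdot H)\cong A(x)$. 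For the topology I would invoke the standard theory of quotients of upper semicontinuous Banach bundles by free and proper group actions: the orbit map $\AA\to\AA/H$ is an open surjection, and one verifies the Fell--Hofmann axioms for a topology making $\AA/H$ an upper semicontinuous Banach bundle over $\XX/H$ by transporting each verification back to $\AA$ along this open map, exactly as the continuity arguments in the proof of \propref{orbit groupoid} were pulled back to $\XX$.

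Next I would check that the operations (i)--(iii) are well defined on orbits. The projection formula (i) is immediate from $H$-equivariance of $p$. For the multiplication (ii), if $s(a\cdot H)=r(b\cdot H)$, i.e.\ $s(a)\cdot H=r(b)\cdot H$, then by freeness there is a unique $h\in H$ with $s(a\cdot h)=r(b)$, and --- just as in the opening paragraph of the proof of \propref{orbit groupoid}, now using that each $a\mapsto a\cdot h$ respects multiplication --- the set $\{zw:z\in a\cdot H,\ w\in b\cdot H,\ s(z)=r(w)\}$ is a single $H$-orbit, namely $(a\cdot h)b\cdot H$, so declaring $(a\cdot H)(b\cdot H)$ to be this orbit is unambiguous and reduces to $(ab)\cdot H$ when $s(a)=r(b)$. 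For the involution (iii), since each $a\mapsto a\cdot h$ is a Fell-bundle automorphism it commutes with $*$, so $(a\cdot h)^*=a^*\cdot h$ and the orbit $a^*\cdot H$ depends only on $a\cdot H$.

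Then I would verify the Fell-bundle axioms. The purely algebraic ones --- associativity of (ii), the $*$-algebra axioms on each unit fibre $A(u\cdot H)$, submultiplicativity $\|(a\cdot H)(b\cdot H)\|\le\|a\cdot H\|\,\|b\cdot H\|$, the $C^*$-identity $\|(a\cdot H)^*(a\cdot H)\|=\|a\cdot H\|^2$, and positivity of $(a\cdot H)^*(a\cdot H)$ --- all follow fibrewise from the corresponding facts in $\AA$ via the isometric identification $A(x\cdot H)\cong A(x)$, after adjusting representatives within their orbits so that sources and ranges match. The substantive point is continuity of multiplication and of involution on $\AA/H$. For multiplication, suppose $a_i\cdot H\to a\cdot H$ and $b_i\cdot H\to b\cdot H$ with $s(a_i\cdot H)=r(b_i\cdot H)$; as in \propref{orbit groupoid} it suffices to treat a subnet, so using openness of $\AA\to\AA/H$ and the Banach-bundle structure I would replace $a_i$ and $b_i$ within their orbits so that $a_i\to a$ and $b_i\to b$ in $\AA$, choose the unique $h_i\in H$ with $s(a_i)\cdot h_i=r(b_i)$, and note $s(a_i)\cdot h_i\to r(b)=s(a)$ while $s(a_i)\to s(a)$, so \lemref{elementary} gives $h_i\to e$; hence $a_i\cdot h_i\to a$, and continuity of multiplication in $\AA$ yields $(a_i\cdot h_i)b_i\to ab$, i.e.\ $(a_i\cdot H)(b_i\cdot H)\to(ab)\cdot H$. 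Continuity of involution is the same net argument without the source/range adjustment.

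I expect the main obstacle to be the first step --- producing the upper semicontinuous Banach-bundle topology on $\AA/H$ and confirming it has the expected fibres --- since the algebraic and continuity-of-operations parts then run essentially on autopilot off \lemref{elementary} and the structure already present in $\AA$. If a general ``quotient of a Banach bundle by a free and proper action'' lemma is available in the toolkit, this step collapses to a citation; otherwise it requires the Fell--Hofmann existence criterion for a bundle topology, with each axiom established by transporting the corresponding statement about $\AA$ along the open orbit map, and that is where the bulk of the honest work lies.
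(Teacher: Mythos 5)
Your proposal is correct and follows essentially the same route as the paper: take the quotient topology on $\AA/H$ with fibres $p\inv(x\cdot H)=p\inv(x)\cdot H$ and norm $\|a\cdot H\|=\|a\|$, use freeness to make the fibrewise operations (in particular addition within a fibre) well defined, and establish continuity of multiplication and involution by lifting convergent nets from $\AA/H$ to $\AA$ exactly as in the proof of \propref{orbit groupoid} via \lemref{elementary}. The paper's own proof is simply a terser version of this, declaring the remaining axioms routine, so your expanded verification (including the Fell--Hofmann/upper-semicontinuity bookkeeping) is a faithful filling-in rather than a different argument.
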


\begin{proof}
  It might be useful to explicitly record what a typical fibre of
  $\AA/H$ is: if $x\in\XX$, so that $x\cdot H$ is a typical element of
  $\XX/H$, then the associated fibre of $\AA/H$ is
  \begin{equation*}
    p\inv(x\cdot H)=p\inv(x)\cdot H.
  \end{equation*}
  It might also be helpful to explicitly record the norm in $\AA/H$:
  \begin{equation*}
    \|a\cdot H\|=\|a\|,
  \end{equation*}
  which is well-defined since $\|a\cdot h\|=\|a\|$ for all $h\in H$.
  The map $p$ defined in (i) is continuous and open because both the
  bundle projection $p:\AA\to\XX$ and the quotient map $\XX\to\XX/H$
  are.  Most of the axioms of a Fell bundle are routine to check.  The
  crucial property is freeness of the $H$-action; for example,
  freeness is the reason that the sum of two elements in the same
  fibre of $\AA/H$ is well-defined.

  The only properties that are not quite routine are continuity of
  multiplication and of involution.  But as in similar situations
  above, continuity can be established by lifting convergent nets from
  $\AA/H$ to $\AA$.
\end{proof}

\subsection{Principal bundles}

We now show that Fell bundles with free and proper group actions by
automorphisms are
really just special kinds of transformation bundles.  First we must
establish the analogous result for groupoids.

To motivate what follows, consider a locally compact groupoid $\YY$
acting on the left of a locally compact Hausdorff space $\Omega$, and
let $q:\Omega\to \YY^{(0)}$ be the associated fibre map, so that for
each $y\in\YY$ the map $u\mapsto y\cdot u$ is a bijection of
$q\inv(s(y))$ to $q\inv(r(y))$.  We can form the transformation
groupoid
  $\YY*\Omega=\{(y,u)\in \YY\times \Omega:s(y)=q(u)\}$,
with multiplication as in \S~\ref{sec:transf-group-bundl}.  Note that
\begin{equation*}
  (\YY*\Omega)^{(0)}=\{\YY^{(0)}*\Omega=\{(u,u)\in
   \YY^{(0)}\times \Omega:u=q(u)\}\cong \Omega
\end{equation*}

Now suppose that the above map $q:\Omega\to\YY^{(0)}$ is a principal
$H$-bundle\footnote{By which we mean that $H$ acts freely and properly
  on $\Omega$ and that $q(u)=q(v)$ if and only if $u\cdot H=v\cdot H$
  --- note that we do \emph{not} assume the bundle is locally
  trivial!}  for some locally compact group $H$, acting on the right
of $\Omega$, and assume that the actions of $H$ and $\YY$ on $\Omega$
commute.

\begin{lem}
  \label{act transformation}
  With the above set-up, $H$ acts freely and properly by automorphisms
  on the
  transformation groupoid $\YY*\Omega$ by
  \begin{equation*}
    (y,u)\cdot h=(y,u\cdot h).
  \end{equation*}
  In particular, $\pi_1:\YY*\Omega\to \YY$ is a principal $H$-bundle.
\end{lem}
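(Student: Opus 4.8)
The plan is to verify three things in turn --- that the formula defines a continuous action of $H$ by groupoid automorphisms, that this action is free, and that it is proper --- and then to deduce the principal-bundle statement.

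First, for the action by automorphisms: since $q:\Omega\to\YY^{(0)}$ is a principal $H$-bundle, $q$ is constant on $H$-orbits, so $q(u\cdot h)=q(u)=s(y)$ for $(y,u)\in\YY*\Omega$; hence $(y,u\cdot h)\in\YY*\Omega$ and the formula makes sense. Continuity and the action identities $(y,u)\cdot e=(y,u)$ and $\bigl((y,u)\cdot h\bigr)\cdot k=(y,u)\cdot(hk)$ are immediate from the corresponding facts for the $H$-action on $\Omega$. For a fixed $h$ the map $(y,u)\mapsto(y,u\cdot h)$ is a homeomorphism of $\YY*\Omega$ with inverse $(y,u)\mapsto(y,u\cdot h\inv)$ of the same form, so it remains only to see that it is a groupoid homomorphism --- and this is the one place where the hypothesis that the $\YY$- and $H$-actions on $\Omega$ commute really enters. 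If $\bigl((y_1,u_1),(y_2,u_2)\bigr)$ is a composable pair in $\YY*\Omega$, so that $s(y_1)=r(y_2)$ and $u_1=y_2\cdot u_2$, then $u_1\cdot h=(y_2\cdot u_2)\cdot h=y_2\cdot(u_2\cdot h)$, which is exactly the condition for $\bigl((y_1,u_1\cdot h),(y_2,u_2\cdot h)\bigr)$ to be composable, with product $(y_1y_2,u_2\cdot h)=\bigl((y_1,u_1)(y_2,u_2)\bigr)\cdot h$. As the inverse map has the same shape, it too is a homomorphism, so each $(y,u)\mapsto(y,u\cdot h)$ is an automorphism of the groupoid $\YY*\Omega$.

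Freeness is immediate: $(y,u)\cdot h=(y,u)$ forces $u\cdot h=u$, hence $h=e$ since $H$ acts freely on $\Omega$. For properness I would observe that the $H$-action on $\YY*\Omega$ is the restriction to $\YY*\Omega$ of the product action on $\YY\times\Omega$ that is trivial on the first factor and the given action on the second. That product action is proper: if $(y_i,u_i)$ converges and $(y_i,u_i\cdot h_i)$ converges, then $u_i\to u$ and $u_i\cdot h_i\to u'$ in $\Omega$, so properness of the $\Omega$-action yields a convergent subnet of $\{h_i\}$. Moreover $\YY*\Omega$ is closed in $\YY\times\Omega$ --- it is the preimage of the closed diagonal of $\YY^{(0)}\times\YY^{(0)}$ under the continuous map $(y,u)\mapsto(s(y),q(u))$ --- and is $H$-invariant; since a proper action restricts to a proper action on any closed invariant subspace, $H$ acts properly on $\YY*\Omega$.

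Finally, for the ``in particular'' I would check the defining property of a principal bundle directly for $\pi_1$. We have $\pi_1(y,u)=\pi_1(y',u')$ exactly when $y=y'$, while $(y,u)$ and $(y',u')$ lie in a common $H$-orbit exactly when $y=y'$ and $u\cdot H=u'\cdot H$; but when $y=y'$ we get $q(u)=s(y)=s(y')=q(u')$, so the principal-bundle property of $q$ already forces $u\cdot H=u'\cdot H$. Thus the two conditions agree, and combined with freeness and properness this is precisely the statement that $\pi_1:\YY*\Omega\to\YY$ is a principal $H$-bundle. The only step needing genuine care is properness, and there all the content sits in two standard facts about proper actions --- stability under forming a product with a trivial action, and under passing to a closed invariant subspace --- with everything else being bookkeeping dictated by the commutativity of the two actions on $\Omega$.
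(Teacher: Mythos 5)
Your proof is correct and follows essentially the same route as the paper: the only substantive verification, that each map $(y,u)\mapsto(y,u\cdot h)$ is a groupoid automorphism, is done via exactly the same composability computation using the commutativity of the $\YY$- and $H$-actions on $\Omega$. The remaining points (freeness, properness, and the fibre-equals-orbit check for $\pi_1$) are declared routine in the paper, and your explicit arguments for them --- restriction of the proper product action to the closed invariant subset $\YY*\Omega$, and the principal-bundle property of $q$ --- are sound fillings-in of those details.
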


\begin{proof}
  We must show that for all $h,k\in H$,
  \begin{compactenum}
  \item $(y,u)\mapsto (y,u\cdot h)$ is an automorphism of
    $\YY*\Omega$,

  \item $((y,u)\cdot h)\cdot k=(y,u)\cdot (hk)$, and

  \item $(y,u)\cdot e=(y,u)$.
  \end{compactenum}
  Since (ii) and (iii) are obvious, we only prove (i).  Let
  $s(y,u)=r(z,v)$ in $\YY*\Omega$, so that $s(y)=r(z)$ and $u=z\cdot
  v$.  Then for $h\in H$ we have
  \begin{equation*}
    u\cdot h=(z\cdot v)\cdot h=z\cdot (v\cdot h),
  \end{equation*}
  so $s((y,u)\cdot h)=r((z,v)\cdot h)$.  Thus we can multiply:
  \begin{align*}
    \bigl((y,u)\cdot h\bigr)\bigl((z,v)\cdot h\bigr) &=(y,u\cdot
    h)(z,v\cdot h) \\&=(yz,v\cdot h) \\&=(yz,v)\cdot h)
    \\&=\bigl((y,u)(z,v)\bigr)\cdot h.
  \end{align*}

  Finally, it is routine to verify that freeness and properness of the
  action of $H$ on $\Omega$ transfers to the action on $\YY*\Omega$,
  and that $\pi_1:\YY*\Omega\to \YY$ is the associated principal
  $H$-bundle.
\end{proof}

Observe in the set-up of \lemref{act transformation}, we have a commutative
diagram
\begin{equation*}
  \xymatrix{ \YY*\Omega \ar[r]^{\pi_2} \ar[d]_{\pi_1} &\Omega \ar[d]^q
    \\
    \YY \ar[r]_s &\YY^{(0)}, }
\end{equation*}
where the vertical maps are principal $H$-bundles, and that $\pi_1$ is
a surjective groupoid homomorphism, and that the map
\begin{equation*}
  (y,u)\cdot H\mapsto y
\end{equation*}
is an $H$-equivariant isomorphism of the quotient groupoid
$(\YY*\Omega)/H$ onto $\YY$ making the diagram
\begin{equation*}
  \xymatrix{ \YY*\Omega \ar[rr]^-{\pi_2} \ar[dd]_{\pi_1} \ar[dr]^{Q}
    &&\Omega \ar[dd]^q
    \\
    &(\YY*\Omega)/H \ar[dl]^\cong \ar[dr]^s
    \\
    \YY \ar[rr]_-s &&\YY^{(0)} }
\end{equation*}
commute, where $Q$ is the quotient map.

\lemref{act transformation} can be regarded as a source of free and
proper actions by automorphisms of groups on groupoids.  We now show
that, remarkably, every such action arises in this manner.  The
underlying reason is a structure theorem for principal bundles,
perhaps due to Palais (see the discussion preceding
\cite[Proposition~1.3.4]{palais}), although we have found it more
convenient to quote the version recorded in
\cite[Proposition~1.3.4]{husemoller}; we feel that this structure
theorem should be better known.

\begin{thm}
  \label{principal groupoid}
  Let $H$ be a locally compact group acting freely and properly by
  automorphisms on the right of a locally compact Hausdorff groupoid
  $\XX$, and let $\YY=\XX/H$ be the quotient groupoid. Then there is
  an action of $\YY$ on $\XX^{(0)}$ such that $\XX$ is
  $H$-equivariantly isomorphic to the transformation groupoid
  $\YY*\XX^{(0)}$.
\end{thm}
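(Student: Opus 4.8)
The plan is to take $\Omega=\XX^{(0)}$ and to obtain the isomorphism as an application of the structure theorem for principal bundles quoted above.

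\emph{Setting up the principal bundle.} Since $H$ acts on $\XX$ by automorphisms it carries $\XX^{(0)}$ onto itself, and the restricted right action of $H$ on $\XX^{(0)}$ is again free and proper (freeness is immediate, and properness is inherited by the invariant subspace $\XX^{(0)}$). Write $Q\colon\XX\to\YY$ for the quotient map. Then $\XX^{(0)}/H=(\XX/H)^{(0)}=\YY^{(0)}$, and — using that $Q$ is open — the subspace topology that $\YY^{(0)}$ inherits from $\YY$ agrees with the quotient topology it inherits from $\XX^{(0)}$; hence $q:=Q|_{\XX^{(0)}}\colon\XX^{(0)}\to\YY^{(0)}$ is a principal $H$-bundle. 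I expect this paragraph to be the main obstacle: once $\XX^{(0)}\to\YY^{(0)}$ is identified as a principal $H$-bundle, the rest is bookkeeping with the conventions.

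\emph{The action and the map $\Phi$.} Define a left action of $\YY$ on $\XX^{(0)}$ as follows: if $\gamma\in\YY$ and $u\in\XX^{(0)}$ with $q(u)=s(\gamma)$, freeness supplies a unique $x\in\XX$ with $Q(x)=\gamma$ and $s(x)=u$, and we set $\gamma\cdot u=r(x)$. Well-definedness uses freeness, the action axioms follow from $Q$ being a groupoid homomorphism, and continuity follows either from a lifting-and-subnet argument of the kind used in the proof of \propref{orbit groupoid} or, after the next step, automatically since the action map is $r\circ\Phi^{-1}$; the associated fibre map is the open surjection $q$, so the transformation groupoid $\YY*\XX^{(0)}$ is defined. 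Now set $\Phi\colon\XX\to\YY*\XX^{(0)}$, $\Phi(x)=(Q(x),s(x))$; this lands in $\YY*\XX^{(0)}$ because the source of $Q(x)$ in $\YY$ is $s(x)\cdot H=q(s(x))$, and $\Phi$ is $H$-equivariant because $Q(x\cdot h)=Q(x)$ and $s(x\cdot h)=s(x)\cdot h$.

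\emph{Conclusion.} The source map $s\colon\XX\to\XX^{(0)}$ is a continuous $H$-equivariant map of the principal $H$-bundle $Q$ to the principal $H$-bundle $q$, covering the source map $\bar s\colon\YY\to\YY^{(0)}$ of $\YY$ (indeed $\bar s\circ Q=q\circ s$); so by \cite[Proposition~1.3.4]{husemoller} it identifies $\XX$, $H$-equivariantly and over $\YY$, with the pullback $\bar s^*\XX^{(0)}=\{(\gamma,u)\in\YY\times\XX^{(0)}:\bar s(\gamma)=q(u)\}$ via the very map $x\mapsto(Q(x),s(x))=\Phi(x)$. Since this pullback space is exactly the underlying space of $\YY*\XX^{(0)}$, we conclude that $\Phi$ is a homeomorphism. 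Finally $\Phi$ is a groupoid homomorphism: using $Q(x')\cdot s(x')=r(x')$, the product $\Phi(x)\Phi(x')$ in the transformation groupoid is defined precisely when $s(x)=r(x')$, and then, since $Q$ is a homomorphism and $s(xx')=s(x')$, it equals $(Q(xx'),s(x'))=\Phi(xx')$. Being a homeomorphism and a groupoid homomorphism, $\Phi$ is an $H$-equivariant isomorphism of topological groupoids $\XX\cong\YY*\XX^{(0)}$, as required.
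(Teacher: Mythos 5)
Your proposal is correct and follows essentially the same route as the paper: both hinge on the Palais--Husemoller structure theorem for (not necessarily locally trivial) principal $H$-bundles, applied to the $H$-map $x\mapsto\bigl(Q(x),s(x)\bigr)$ covering $\bar s\colon\YY\to\YY^{(0)}$, to identify $\XX$ with the pullback $\bar s^*(\XX^{(0)})$ and then to read off the $\YY$-action on $\XX^{(0)}$ and match the groupoid structure with that of the transformation groupoid. Your direct definition of the action via unique lifts (and getting its continuity from $r\circ\Phi^{-1}$) is just a more explicit phrasing of the paper's transport of structure through $\theta_s$ and $\theta_r$.
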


\begin{proof}
  We have a commutative diagram
  \begin{equation*}
    \xymatrix{ \XX^{(0)} \ar[d]_q &\XX \ar[l]_-{\tilde r}
      \ar[r]^-{\tilde s} \ar[d]^q &\XX^{(0)} \ar[d]^q
      \\
      \YY^{(0)} &\YY \ar[l]^-r \ar[r]_-s &\YY^{(0)} }
  \end{equation*}
  (where we denote the range and source maps for the groupoid $\XX$ by
  $\tilde r$ and $\tilde s$, respectively), in which the vertical maps
  are principal $H$-bundles and the horizontal maps are continuous.
  We have a pull-back diagram
  \begin{equation*}
    \xymatrix{ s^*(\XX^{(0)}) \ar[r]^-{\pi_2} \ar[d]_{\pi_1} &\XX^{(0)}
      \ar[d]^q
      \\
      \YY \ar[r]_-s &\YY^{(0)}, }
  \end{equation*}
  where $
    s^*(\XX^{(0)})=\{(y,u)\in \YY\times\XX^{(0)}:s(y)=q(u)\}$.
  By the universal property of pull-backs, we have a unique continuous
  map $\theta_s$ making the diagram
  \begin{equation*}
    \xymatrix{ \XX \ar[dd]_q \ar[rr]^-{\tilde s}
      \ar@{-->}[dr]^{\theta_s}_{!}  &&\XX^{(0)} \ar[dd]^q
      \\
      &s^*(\XX^{(0)}) \ar[ur]^{\pi_2} \ar[dl]^{\pi_1}
      \\
      \YY \ar[rr]_-s &&\YY^{(0)} }
  \end{equation*}
  commute, namely $
    \theta_s(x)=\bigl(q(x),\tilde s(x)\bigr)$,
  and, by \cite[Theorem~4.4.2]{husemoller}, $\theta_s$ is in fact a
  principal $H$-bundle isomorphism.  Similarly for $\theta_r:\XX\cong
  r^*(\XX^{(0)})$, and we get a big commuting diagram
  \begin{equation}
    \label{big diagram}
\begin{gathered} 
    \xymatrix{
      \XX^{(0)} \ar[dd]_q
      &&\XX \ar[dd]^q \ar[ll]_-{\tilde r} \ar[rr]^-{\tilde s}
      \ar[dl]_{\theta_r}^\cong \ar[dr]^{\theta_s}_\cong
      &&\XX^{(0)} \ar[dd]^q
      \\
      &r^*(\XX^{(0)}) \ar[ul]_{\pi_2} \ar[dr]_{\pi_1}
      &&s^*(\XX^{(0)}) \ar[ur]^{\pi_2} \ar[dl]^{\pi_1}
      \\
      \YY^{(0)}
      &&\YY \ar[ll]_-r \ar[rr]_-s
      &&\YY^{(0)}.
    }
\end{gathered}
  \end{equation}
  We use the homeomorphisms $\theta_r$ and $\theta_s$ to (temporarily)
  \emph{define} groupoid structures on the pull-backs $r^*(\XX^{(0)})$
  and $s^*(\XX^{(0)})$, and this gives rise to a groupoid isomorphism
  $\theta$ making the diagram
  \begin{equation*}
    \xymatrix{ &\XX \ar[dl]_{\theta_r} \ar[dr]^{\theta_s}
      \\
      r^*(\XX^{(0)}) &&s^*(\XX^{(0)}) \ar[ll]_-\theta^-\cong }
  \end{equation*}
  commute.  Commutativity of \eqref{big diagram} tells us that
  $\theta$ has the form $
    \theta(y,u)=(y,y\cdot u)$,
  for some map $
    (y,u)\mapsto y\cdot u$ from $s^*(\XX^{(0)}) $ to $\XX^{(0)}$.
  
We claim that this gives an action of the groupoid $\YY$ on the
  space $\XX^{(0)}$.  Continuity of the map $(y,u)\mapsto y\cdot u$ is
  clear.  The associated fibre map will be $q:\XX^{(0)}\to \YY^{(0)}$,
  which is continuous and open, and by construction the map $u\mapsto
  y\cdot u$ is a bijection of $q\inv(s(y))$ onto $q\inv(r(y))$.  To
  help us see how to show that
  \begin{equation}
    \label{associative}
    y\cdot (z\cdot u)=(yz)\cdot u
  \end{equation}
  whenever $s(y)=r(z)$ and $s(z)=q(u)$, we first observe a few
  properties of the groupoid $s^*(\XX^{(0)})$: we have
  \begin{itemize}
  \item $s(z,u)=(s(z),u)$;
  \item $r(z,u)=(r(z),z\cdot u)$;
  \item $(y,u')(z,u)$ is defined if and only if $u'=z\cdot u$, and
    then $
      (y,z\cdot u)(z,u)=(yz,u)$.
  \end{itemize}
  Now take $w\in \YY$ with $s(w)=r(y)$.  Then $s(w)=q(y\cdot (z\cdot
  u))$, so $(w,y\cdot (z\cdot u))\in s^*(\XX^{(0)})$, and
  \begin{equation*}
    s(w,y\cdot (z\cdot u)) =r(y,z\cdot u) =r\bigl((y,z\cdot
    u)(z,u)\bigr) =r(yz,u),
  \end{equation*}
  and \eqref{associative} follows.

  We now have an action of $\YY$ on $\XX^{(0)}$, and hence we can form
  the transformation groupoid $\YY*\XX^{(0)}$, which has the same
  underlying topological space as the pull-back $s^*(\XX^{(0)})$, and
  in fact the above reasoning now shows that the groupoid operation we
  temporarily gave to $s^*(\XX^{(0)})$ by insisting that the bijection
  $\theta_s:\XX\to s^*(\XX^{(0)})$ be a groupoid isomorphism,
  coincides with the operation on the transformation groupoid
  $\YY*\XX^{(0)}$.  Thus $\theta_s$ is an isomorphism of $\XX$ onto
  $\YY*\XX^{(0)}$.

  Finally, for the $H$-equivariance, we have
  \begin{align*}
    \theta_s(x)\cdot h &=(q(x),\tilde s(x))\cdot h =(q(x),\tilde
    s(x)\cdot h) =(q(x),\tilde s(x\cdot h)) \\&=(q(x\cdot h),\tilde
    s(x\cdot h)) =\theta_s(x\cdot h).  \qedhere
  \end{align*}
\end{proof}

We employ a by-now familiar technique to promote the above result from
groupoids to Fell bundles.

Let $p:\BB\to \YY$ be a Fell bundle over a locally compact Hausdorff
groupoid, let $\YY$ act on a locally compact Hausdorff space $\Omega$,
and let the map $q:\Omega\to \YY^{(0)}$ associated to the $\YY$-action
be a principal $H$-bundle for some locally compact group $H$ acting on
the right of $\Omega$.  Keep the assumptions and notation from the
preceding section, so that in particular the actions of $\YY$ and $H$
on $\Omega$ commute.

We form the transformation Fell bundle
\begin{equation*}
  \BB*\Omega=\{(b,u)\in \BB\times \Omega:(p(b),u)\in \YY*\Omega\}
\end{equation*}
over $\YY*\Omega$.

Then $H$ acts by automorphisms on $\BB*\Omega$ by
\begin{equation*}
  (b,u)\cdot h=(b,u\cdot h),
\end{equation*}
and moreover this action is free and proper in the sense that, by
\lemref{act transformation}, the associated action on $\YY*\Omega$ has
these properties.

It follows that the map
\begin{equation*}
  (b,u)\cdot H\mapsto b
\end{equation*}
is a $H$-equivariant isomorphism of the quotient Fell bundle
$(\BB*\Omega)/H$ onto $\BB$ making the diagram
\begin{equation*}
  \xymatrix{ \BB*\Omega \ar[rr]^-{p*\id} \ar[dd]_{\pi_1} \ar[dr]^{Q}
    &&\YY*\Omega \ar[dd]^{\pi_1}
    \\
    &(\BB*\Omega)/H \ar[dl]^\cong \ar[dr]^p
    \\
    \BB \ar[rr]_-p &&\YY }
\end{equation*}
commute, where $Q$ is the quotient map.

The above construction gives a source of free and proper actions (by
automorphisms) of groups on Fell bundles; again, the structure theorem
of Palais and Husemuller implies that all such actions arise in this
manner:

\begin{thm}
  \label{principal Fell}
  Let $\tilde p:\AA\to\XX$ be a Fell bundle over a locally compact
  Hausdorff groupoid, and let $H$ be a locally compact group.  Suppose
  $H$ acts freely and properly by automorphisms on the right of $\AA$,
  and let
  \begin{equation*}
    p:\BB\to \YY
  \end{equation*}
  be the quotient Fell bundle \(so that $\BB=\AA/H$ and $\YY=\XX/H$\).
  Then there is an action of $\YY$ on $\XX^{(0)}$ such that $\AA$ is
  $H$-equivariantly isomorphic to the transformation Fell bundle
  $\BB*\XX^{(0)}$.
\end{thm}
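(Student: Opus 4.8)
The plan is to lift \thmref{principal groupoid} from the base groupoid to the Fell bundle, by the same net-lifting technique used in \propref{orbit groupoid} and \corref{orbit bundle}. Write $q\colon\XX^{(0)}\to\YY^{(0)}=\XX^{(0)}/H$ for the quotient map; since $H$ acts freely and properly by automorphisms on $\XX$, it acts freely and properly on the closed $H$-invariant subset $\XX^{(0)}$, so $q$ is a principal $H$-bundle. By \thmref{principal groupoid} there is then an action of $\YY=\XX/H$ on $\XX^{(0)}$, with fibre map $q$, and an $H$-equivariant groupoid isomorphism $\theta_s\colon\XX\iso\YY*\XX^{(0)}$ with $\theta_s(x)=(x\cdot H,s(x))$; reading off that proof, the action additionally satisfies
\[
(x\cdot H)\cdot s(x)=r(x)\righttext{for every}x\in\XX,
\]
and, by the $H$-equivariance in its construction, commutes with the $H$-action on $\XX^{(0)}$. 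This is the action in the statement; with it, $\BB*\XX^{(0)}$ (where $\BB=\AA/H$) is a transformation Fell bundle over $\YY*\XX^{(0)}$ in the sense of \S\ref{sec:transf-group-bundl}, carrying the free and proper $H$-action $(b,u)\cdot h=(b,u\cdot h)$ of \lemref{act transformation}.

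The key step is to define $\Phi\colon\AA\to\BB*\XX^{(0)}$ by
\[
\Phi(a)=\bigl(a\cdot H,\,s(a)\bigr)
\]
and to show it is the desired $H$-equivariant isomorphism. It covers $\theta_s$. First I would note it is well-defined ($s(a\cdot H)=s(a)\cdot H=q(s(a))$), $H$-equivariant (since $(a\cdot h)\cdot H=a\cdot H$ and $s(a\cdot h)=s(a)\cdot h$), and, on the fibre over $x\in\XX$, equal to the linear isometric bijection $A(x)\to B(x\cdot H)=A(x)\cdot H$ of \corref{orbit bundle}; bijectivity of $\Phi$ onto $\BB*\XX^{(0)}$ then follows from this fibrewise statement together with bijectivity of $\theta_s$ and freeness of the $H$-action. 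Next I would check that $\Phi$ intertwines the multiplications and involutions, and this is exactly where the identity $(x\cdot H)\cdot s(x)=r(x)$ enters: for composable $a,a'$ it forces $p(a'\cdot H)\cdot s(a')=r(a')=s(a)$, so $\Phi(a)\Phi(a')$ is defined in $\BB*\XX^{(0)}$ and equals $\bigl((a\cdot H)(a'\cdot H),s(a')\bigr)=\Phi(aa')$, while $\Phi(a)^*=\bigl(a^*\cdot H,(p(a)\cdot H)\cdot s(a)\bigr)=\bigl(a^*\cdot H,r(a)\bigr)=\Phi(a^*)$.

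Finally I would check that $\Phi$ is a homeomorphism. Continuity is immediate, since $a\mapsto a\cdot H$ and $a\mapsto s(a)$ are continuous; for $\Phi\inv$ I would lift a convergent net $(b_i,u_i)\to(b,u)$ in $\BB*\XX^{(0)}$ to $\AA$, using openness of $\AA\to\BB$ to arrange (along a subnet) that $b_i=a_i\cdot H$ with $a_i\to a$, and then \lemref{elementary} to force the $H$-corrections needed to also match the $\XX^{(0)}$-coordinates to tend to $e$ --- precisely the argument of \propref{orbit groupoid} and \corref{orbit bundle}. Equivalently, $\Phi$ is the composite of the pullback isomorphism $\AA\cong Q^*\BB$ over $\XX$, where $Q\colon\XX\to\YY$ is the quotient map, with the identification $Q^*\BB=\theta_s^*(\BB*\XX^{(0)})$, so one could instead invoke the general fact that a continuous fibrewise-isometric map of Fell bundles over a homeomorphism is an isomorphism.

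The main obstacle, such as it is, is extracting from \thmref{principal groupoid} the two facts used above in the precise form needed --- the relation $(x\cdot H)\cdot s(x)=r(x)$ and the commuting of the $\YY$- and $H$-actions on $\XX^{(0)}$ --- after which the verification of the Fell-bundle axioms for $\Phi$ is entirely mechanical, the substantive work having already been done for groupoids in \thmref{principal groupoid}.
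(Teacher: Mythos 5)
Your proposal is correct and follows essentially the same route as the paper: the map $\Phi(a)=(a\cdot H,s(a))$ is exactly the paper's $\tau(a)=\bigl(q(a),\tilde s(\tilde p(a))\bigr)$, the action of $\YY$ on $\XX^{(0)}$ is imported from \thmref{principal groupoid}, and the multiplicativity and involution checks use the same identity $(x\cdot H)\cdot s(x)=r(x)$. The only difference is cosmetic: where you argue bijectivity fibrewise and continuity of $\Phi\inv$ by net-lifting (or a pullback identification), the paper gets that $\tau$ is a homeomorphism in one stroke by citing the principal-bundle morphism theorem \cite[Theorem~4.4.2]{husemoller}, the same tool already used in \thmref{principal groupoid}.
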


\begin{proof}
  We have a commutative diagram
  \begin{equation*}
    \xymatrix{ \AA \ar[r]^-{\tilde p} \ar[d]_q &\XX \ar[r]^-{\tilde s}
      \ar[d]_q &\XX^{(0)} \ar[d]^q
      \\
      \BB \ar[r]_-p &\YY \ar[r]_-s &\YY^{(0)}, }
  \end{equation*}
  where the vertical maps are principal $H$-bundles, and so it again
  follows from \cite[Theorem~4.4.2]{husemoller} that the map
  \begin{equation*}
    \tau(a)=\bigl(q(a),\tilde s(\tilde p(a)\bigr)
  \end{equation*}
  is a principal $H$-bundle isomorphism making the diagram
  \begin{equation*}
    \xymatrix{ \AA \ar[rr]^-{\tilde s\circ\tilde p} \ar[dd]_q
      \ar[dr]^\tau_\cong &&\XX^{(0)} \ar[dd]^q
      \\
      &\BB*\XX^{(0)} \ar[dl]^{\pi_1} \ar[ur]^{\pi_2}
      \\
      \BB \ar[rr]_-{s\circ p} &&\YY^{(0)} }
  \end{equation*}
  commute.  It is important to note that at this point the notation
  $\BB*\XX^{(0)}$ only stands for the pull-back principal $H$-bundle.
  By \thmref{principal groupoid} we have an action of $\YY$ on
  $\XX^{(0)}$ such that $\XX\cong \YY*\XX^{(0)}$.  Thus we can form
  the transformation Fell bundle $\BB*\XX^{(0)}$.

  We finish by showing that the principal $H$-bundle isomorphism
  $\tau:\AA\to \BB*\XX^{(0)}$ is a homomorphism (and hence an
  isomorphism) of Fell bundles.  If $a,b\in\AA$ with $\tilde
  s\circ\tilde p(a)=\tilde r\circ\tilde p(b)$, then
  \begin{equation*}
    \tilde s(\tilde p(a)) =\tilde r(\tilde p(b)) =q(\tilde p(b))\cdot
    \tilde s(\tilde p(b)) =p(q(b))\cdot \tilde s(\tilde p(b)),
  \end{equation*}
  so that $\tau(a)$ and $\tau(b)$ are composable in $\BB*\XX^{(0)}$,
  and we have
  \begin{align*}
    \tau(a)\tau(b) &=\bigl(q(a),\tilde s\circ\tilde
    p(a)\bigr)\bigl(q(b),\tilde s\circ\tilde p(b)\bigr)
    =(q(a)q(b),\tilde s\circ\tilde p(b)) \\&=(q(ab),\tilde
    s\circ\tilde p(ab)) =\tau(ab).
  \end{align*}
  For the involution, we have
  \begin{align*}
    \tau(a)^* &=\bigl(q(a)^*,p(q(a))\cdot \tilde s(\tilde p(a))\bigr)
    =(q(a^*),\tilde r(\tilde p(a))) \\&=(q(a^*),\tilde s(\tilde
    p(a)\inv)) =(q(a^*),\tilde s(\tilde p(a^*))) =\tau(a^*).
    \qedhere
  \end{align*}
\end{proof}

\subsection{Orbit action}

\begin{lem}
  \label{quotient action}
  Let $\XX$ be a locally compact groupoid and $H$ a locally compact
  group.  Suppose $H$ acts freely and properly by automorphisms on the
  right of $\XX$.  Then the orbit groupoid $\XX/H$ acts
  on the left of the space $\XX$ by
  \begin{equation}
    \label{quotient act}
    (x\cdot H)\cdot y=xy
    \righttext{whenever}s(x)=r(y).
  \end{equation}
\end{lem}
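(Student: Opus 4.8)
The plan is to define the action exactly as in the statement and then verify the axioms, the only non-routine point being continuity.

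First I would pin down the fibre map. The unit space of the orbit groupoid is $(\XX/H)^{(0)}=\XX^{(0)}/H$, and the associated fibre map $\rho\colon\XX\to\XX^{(0)}/H$ should be $\rho(y)=r(y)\cdot H$; since the range map $r\colon\XX\to\XX^{(0)}$ and the quotient map $\XX^{(0)}\to\XX^{(0)}/H$ are continuous and open, so is $\rho$. With this fibre map, $(x\cdot H)\cdot y$ is to be defined precisely when $s(x\cdot H)=\rho(y)$, that is, $s(x)\cdot H=r(y)\cdot H$; because $H$ acts freely there is then a unique representative $x'$ of $x\cdot H$ with $s(x')=r(y)$, and $(x\cdot H)\cdot y:=x'y$ is unambiguous. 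Equivalently, when $s(x)=r(y)$ one sets $(x\cdot H)\cdot y=xy$, freeness guaranteeing independence of the representative.

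Next I would check the algebraic axioms for a left action, which are routine once representatives are chosen compatibly: $\rho\big((x\cdot H)\cdot y\big)=r(xy)\cdot H=r(x)\cdot H=r(x\cdot H)$; the unit $r(y)\cdot H$ of $\XX/H$ acts by $(r(y)\cdot H)\cdot y=r(y)y=y$; and for $(x\cdot H,z\cdot H)$ composable in $\XX/H$ with $s(z\cdot H)=\rho(y)$, one adjusts representatives so that $s(z)=r(y)$ and $s(x)=r(z)$ and computes $(x\cdot H)\cdot\big((z\cdot H)\cdot y\big)=x(zy)=(xz)y=\big((xz)\cdot H\big)\cdot y=\big((x\cdot H)(z\cdot H)\big)\cdot y$, using the multiplication of $\XX/H$ from \propref{orbit groupoid}.

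The substantive step is continuity of $(x\cdot H,y)\mapsto(x\cdot H)\cdot y$ on the fibred product $\{(x\cdot H,y):s(x\cdot H)=\rho(y)\}$, and here I would use the net-lifting device already employed in \propref{orbit groupoid}. Suppose $(x_i\cdot H,y_i)\to(x\cdot H,y)$; it suffices to find a subnet along which $(x_i\cdot H)\cdot y_i\to(x\cdot H)\cdot y$. Take the representative $x$ of $x\cdot H$ with $s(x)=r(y)$. Since $\XX\to\XX/H$ is open, after passing to a subnet we may replace each $x_i$ by a representative of $x_i\cdot H$ with $x_i\to x$ in $\XX$. For each $i$ let $h_i\in H$ be the unique element with $s(x_i)\cdot h_i=r(y_i)$; then $s(x_i)\to s(x)$ while $s(x_i)\cdot h_i=r(y_i)\to r(y)=s(x)$, so \lemref{elementary}, applied to the free and proper right action of $H$ on $\XX$, gives $h_i\to e$. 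Replacing $x_i$ by $x_i\cdot h_i$ we still have $x_i\to x$ (continuity of the $H$-action) and now $s(x_i)=r(y_i)$, so
\[
(x_i\cdot H)\cdot y_i=x_iy_i\to xy=(x\cdot H)\cdot y
\]
by continuity of multiplication in $\XX$. I expect this continuity step to be the main obstacle: the point is to arrange, using openness of the quotient and \lemref{elementary}, that the lifted representatives converge while satisfying the source constraint $s(x_i)=r(y_i)$; everything else is bookkeeping.
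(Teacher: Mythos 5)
Your argument is correct, but it is not the route the paper takes. You prove the lemma directly: you identify the fibre map $\rho(y)=r(y)\cdot H$, use freeness to see the formula is unambiguous, check the algebraic axioms by choosing representatives compatibly, and establish continuity by lifting nets through the open quotient $\XX\to\XX/H$ and invoking \lemref{elementary} to correct the lifts so that $s(x_i)=r(y_i)$ --- exactly the device used in the paper's proof of \propref{orbit groupoid}, which you cite. The paper instead proves \lemref{quotient action} by first invoking the structure theorem \thmref{principal groupoid} to replace $\XX$ by the $H$-equivariantly isomorphic transformation groupoid $\XX/H*\XX^{(0)}$; in those coordinates the action reads $z\cdot(w,u)=(zw,u)$, so well-definedness and continuity are immediate, and the fibre map $(x\cdot H,u)\mapsto r(x)\cdot H$ is visibly continuous and open. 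Your approach buys independence from the Palais--Husemoller principal-bundle machinery behind \thmref{principal groupoid}, at the cost of redoing the net-lifting analysis; the paper's approach buys a two-line proof and reuses a normal form that also streamlines \corref{orbit bundle action} and \propref{semidirect orbit action}. One small point of care: when you say it suffices to find a convergent subnet, you should phrase it as in \propref{orbit groupoid} --- every subnet satisfies the same hypotheses, so it is enough that some subnet of each subnet converges to $(x\cdot H)\cdot y$; with that standard remark your continuity argument is complete.
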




\begin{rem*}
  Note that $\XX$ is being used in two different ways in the statement
  of the above lemma: first as a groupoid, and second as just a space.
\end{rem*}

\begin{proof}
  This will be easier if we replace $\XX$ by the isomorphic
  transformation groupoid $\XX/H*\XX^{(0)}$, using \thmref{principal
    groupoid}.  Then the formula~\eqref{quotient act} becomes
  \begin{equation*}
    z\cdot (w,u)=(zw,u)
  \end{equation*}
  for $z,w\in \XX/H$ with $s(z)=r(w)$ and $s(w)\cdot H=u\cdot H$.  It
  is now clear that the action is well-defined and continuous, and
  that the associated map
  \begin{equation*}
    \rho:\XX/H*\XX^{(0)}\to (\XX/H)^{(0)}=\XX^{(0)}/H
  \end{equation*}
  given by
  \begin{equation*}
    \rho(x\cdot H,u)=r(x)\cdot H
  \end{equation*}
  is continuous and open (because both the range map
  $r:\XX\to\XX^{(0)}$ and the quotient map $\XX\to\XX/H$ are).
\end{proof}

\begin{cor}
  \label{orbit bundle action}
  Let $p:\AA\to\XX$ be a Fell bundle over a locally compact groupoid,
  and let $H$ be a locally compact group.  Suppose $H$ acts freely and
  properly on the right of $\AA$ by Fell-bundle automorphisms.  Then
  the orbit Fell bundle $\AA/H$ acts on the Banach bundle $\AA$ by
  \begin{equation}
    \label{orbit bundle act}
    (a\cdot H)\cdot b=ab
    \righttext{whenever}s(a)=r(b).
  \end{equation}
\end{cor}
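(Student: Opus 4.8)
The plan is to imitate the proof of \lemref{quotient action}, lifting it from the groupoid level to the Fell-bundle level by the device used throughout this appendix. Write $\BB=\AA/H$ and $\YY=\XX/H$. By \thmref{principal Fell} there is an action of $\YY$ on $\XX^{(0)}$ and an $H$-equivariant isomorphism of $\AA$ onto the transformation Fell bundle $\BB*\XX^{(0)}$, compatibly (via \thmref{principal groupoid}) with the identification $\XX\cong\YY*\XX^{(0)}$. So it is enough to produce the action of \eqref{orbit bundle act} after these identifications are made, that is, to let the orbit Fell bundle $\BB$ act on the Banach bundle underlying $\BB*\XX^{(0)}$.

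Under the identifications, an element of $\BB*\XX^{(0)}$ is a pair $(b,u)$ with $(p(b),u)\in\YY*\XX^{(0)}$, and formula \eqref{orbit bundle act} becomes
\[
  c\cdot(b,u)=(cb,u) \righttext{whenever} s(c)=r(b)
\]
(composability in $\BB$); in this picture the operation is visibly well-defined, whereas in the original picture one must note that freeness of the $H$-action singles out, among the representatives of the orbit $a\cdot H$, the unique one whose source equals $r(b)$, so that ``$ab$ with $s(a)=r(b)$'' names a single element of $\AA$. The induced action on base spaces is exactly the action of $\YY$ on the space $\XX\cong\YY*\XX^{(0)}$ furnished by \lemref{quotient action}, namely $z\cdot(w,u)=(zw,u)$, with fibre map $(w,u)\mapsto r(w)$; hence the fibre map $\BB*\XX^{(0)}\to\YY^{(0)}$ attached to the $\BB$-action is $(b,u)\mapsto r(p(b))$, continuous and open because $p$, the range map, and the transformation-bundle projection are.

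It then remains to verify the axioms of \cite{mw:fell} for a Fell bundle acting on a Banach bundle. Continuity of $(c,(b,u))\mapsto(cb,u)$ is clear from continuity of multiplication in the Fell bundle $\BB=\AA/H$ (\corref{orbit bundle}), the $\XX^{(0)}$-coordinate merely being carried along; bilinearity on fibres is bilinearity of multiplication in $\BB$; the norm identity is $\|(cb,u)\|=\|cb\|\le\|c\|\,\|b\|=\|c\|\,\|(b,u)\|$; compatibility with the base action holds because $(cb,u)$ lies over $(zw,u)=z\cdot(w,u)$ when $c,b$ lie over $z,w$; and associativity $(c_1c_2)\cdot(b,u)=c_1\cdot\bigl(c_2\cdot(b,u)\bigr)$ reduces to associativity in $\BB$ together with the compatibility already recorded in \lemref{quotient action}. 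I expect the only real obstacle to be pinning down the exact list of axioms demanded in \cite{mw:fell} and --- were one to argue directly, without passing to the transformation bundle --- the continuity of the action, which there would require a \lemref{elementary}-style adjustment of orbit representatives; the transformation-bundle route avoids this, and everything else descends from \lemref{quotient action} and the structure theorem \thmref{principal Fell}.
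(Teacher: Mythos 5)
Your proposal is correct and follows essentially the same route as the paper: both pass, via \thmref{principal Fell} (compatibly with \thmref{principal groupoid}), to the isomorphic transformation Fell bundle $\AA/H*\XX^{(0)}$, where the action becomes $c\cdot(d,u)=(cd,u)$ and well-definedness, continuity, and compatibility with the base action of \lemref{quotient action} are immediate. Your extra remarks spelling out the fibre map and the \cite{mw:fell} axioms are just a more detailed version of what the paper leaves as ``clear.''
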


\begin{rem*}
  Again, note that $\AA$ is being used in two different ways in the
  statement of the above corollary: first as a Fell bundle, and second
  as just a Banach bundle.
\end{rem*}

\begin{proof}
  As in \lemref{quotient action}, it is easier if we work with the
  isomorphic transformation Fell bundle $\AA/H*\XX^{(0)}$, using
  \thmref{principal Fell}.  Then the formula~\eqref{orbit bundle act}
  becomes
  \begin{equation*}
    c\cdot (d,u)=(cd,u)
  \end{equation*}
  for $c,d\in\AA$ with $s(c)=r(d)$ and $s(d)\cdot H=u\cdot H$.  Again
  it is now clear that this action is well-defined and continuous, and
  is compatible with the action of the orbit groupoid $\XX/H$ on $\XX$
  from \lemref{quotient action}.
\end{proof}

\subsection{Semidirect product orbit action}

\begin{prop}
  \label{semidirect orbit action}
  Suppose that we are given commuting free and proper actions of
  locally compact groups $G$ and $H$ on the left and right,
  respectively, by automorphisms on a locally compact groupoid $\XX$.
  Then:
  \begin{enumerate}
  \item $G$ acts on the left of the orbit groupoid $\XX/H$ by
    \begin{equation*}
      s\cdot (x\cdot H)=(s\cdot x)\cdot H;
    \end{equation*}

  \item the semidirect-product groupoid $\XX/H\rtimes G$ acts freely
    and properly on the left of the space $\XX$ by
    \begin{equation*}
      (x\cdot H,t)\cdot y=x(t\cdot y) \righttext{whenever}s(x)=r(t\cdot
      y).
    \end{equation*}
  \end{enumerate}
\end{prop}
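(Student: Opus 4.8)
The plan is to dispatch (i) by a routine verification and then to obtain (ii) as a special case of the semidirect-product action machinery of \lemref{semidirect action}, leaving freeness and properness as the only real work; I expect properness to be the main obstacle. For (i), I would first note that $s\cdot(x\cdot H)=(s\cdot x)\cdot H$ is well defined \emph{precisely} because the two actions commute, since then $s\cdot(x\cdot h)=(s\cdot x)\cdot h\in(s\cdot x)\cdot H$ for every $h\in H$. Continuity is immediate from continuity of the $G$-action on $\XX$ together with the fact that the quotient map $\XX\to\XX/H$ is continuous and open; the action axioms $e\cdot(x\cdot H)=x\cdot H$ and $s\cdot\bigl(t\cdot(x\cdot H)\bigr)=(st)\cdot(x\cdot H)$ are read off the corresponding identities in $\XX$; and since the quotient map is a groupoid homomorphism and each $x\mapsto s\cdot x$ is a groupoid automorphism of $\XX$ (and $\XX/H$ is a groupoid by \propref{orbit groupoid}, using that $H$ acts freely and properly), each $x\cdot H\mapsto s\cdot(x\cdot H)$ is a groupoid automorphism of $\XX/H$; in particular $\XX/H\rtimes G$ makes sense.

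For (ii), by \lemref{quotient action} the orbit groupoid $\XX/H$ acts on the \emph{space} $\XX$ by $(x\cdot H)\cdot y=xy$ when $s(x)=r(y)$, with associated fibre map $y\mapsto r(y)\cdot H$, and $G$ also acts on the space $\XX$. I would then check that these two actions are covariant in the sense preceding \lemref{semidirect action}: for composable $x,y$,
\[
s\cdot\bigl((x\cdot H)\cdot y\bigr)=s\cdot(xy)=(s\cdot x)(s\cdot y)=\bigl((s\cdot x)\cdot H\bigr)\cdot(s\cdot y)=\bigl(s\cdot(x\cdot H)\bigr)\cdot(s\cdot y),
\]
which is exactly the statement that $G$ acts on $\XX$ by automorphisms. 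Hence \lemref{semidirect action} applies and produces an action of $\XX/H\rtimes G$ on the space $\XX$ given by $(x\cdot H,t)\cdot y=(x\cdot H)\cdot(t\cdot y)=x(t\cdot y)$; the composability condition coming out of the lemma is $s(x)\cdot H=r(t\cdot y)\cdot H$, and by freeness of the $H$-action I can always adjust $x$ within its orbit $x\cdot H$ so that $s(x)=r(t\cdot y)$, recovering the stated formula.

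It then remains to prove that this action of $\XX/H\rtimes G$ on $\XX$ is free and proper. For freeness, suppose $(x\cdot H,t)\cdot y=y$ with $s(x)=r(t\cdot y)$, so $x(t\cdot y)=y$; applying the source map gives $s(y)=s(t\cdot y)=t\cdot s(y)$, and since $G$ acts freely on $\XX$ and the unit space $\XX^{(0)}$ is $G$-invariant (automorphisms preserve units), $G$ acts freely on $\XX^{(0)}$, so $t=e$. Then $xy=y$ with $s(x)=r(y)$, and right-multiplying by $y\inv$ forces $x=x\cdot r(y)=r(y)\in\XX^{(0)}$; hence $(x\cdot H,t)=(r(y)\cdot H,e)$ is a unit of $\XX/H\rtimes G$, as required.

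For properness --- the delicate point --- I would take a net $\bigl((x_i\cdot H,t_i),y_i\bigr)$ in $(\XX/H\rtimes G)*\XX$ with $\bigl((x_i\cdot H,t_i)\cdot y_i,\ y_i\bigr)\to(w,y)$ in $\XX\times\XX$, and show this net has a convergent subnet. Choosing each $x_i$ within its orbit so that $s(x_i)=r(t_i\cdot y_i)$, we have $(x_i\cdot H,t_i)\cdot y_i=x_i(t_i\cdot y_i)$, and applying the source map gives $t_i\cdot s(y_i)\to s(w)$ while $s(y_i)\to s(y)$. Since $\XX^{(0)}$ is closed in the Hausdorff groupoid $\XX$ and is $G$-invariant, $G$ acts properly on $\XX^{(0)}$, being the restriction of the proper action on $\XX$; hence, after passing to a subnet, $t_i\to t$ in $G$. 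Then $t_i\cdot y_i\to t\cdot y$, and since $x_i=\bigl(x_i(t_i\cdot y_i)\bigr)(t_i\cdot y_i)\inv$ with $s(w)=\lim s(t_i\cdot y_i)=s(t\cdot y)$, continuity of multiplication gives $x_i\to w(t\cdot y)\inv=:x$, whence $(x_i\cdot H,t_i)\to(x\cdot H,t)$; continuity of the source map then shows $(x\cdot H,t)\cdot y$ is defined, so the net converges to $\bigl((x\cdot H,t),y\bigr)$ in $(\XX/H\rtimes G)*\XX$ along this subnet. The main care here is in keeping the composability conditions straight and in managing the repeated passages to representatives and subnets; conceptually everything reduces to properness of the $G$-action on $\XX^{(0)}$.
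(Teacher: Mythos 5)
Your proof is correct, and the first half coincides with the paper's: you build the action exactly as the paper does, from \lemref{quotient action} plus the covariance check feeding into \lemref{semidirect action}, with the same remark about adjusting $x$ in its $H$-orbit to reach the cleaned-up formula. Where you diverge is in the freeness/properness verification. The paper first invokes the structure theorem \thmref{principal groupoid} to replace $\XX$ by the homeomorphic transformation groupoid $\XX/H*\XX^{(0)}$, where the action takes the product form $(z,t)\cdot(w,u)=\bigl(z(t\cdot w),t\cdot u\bigr)$; freeness is then a coordinate-wise check and properness is handled by a compact-set containment argument ($t$ confined to a compact $L$ by properness of $G$ on $\XX^{(0)}$, then $z\in K_1(L\cdot K_1\inv)$). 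You instead work directly on $\XX$, pinning down orbit representatives with $s(x_i)=r(t_i\cdot y_i)$ and running a net argument: properness of $G$ on the closed invariant set $\XX^{(0)}$ (applied to $s(y_i)$ and $t_i\cdot s(y_i)$) controls $t_i$, and the identity $x_i=\bigl(x_i(t_i\cdot y_i)\bigr)(t_i\cdot y_i)\inv$ recovers the remaining coordinate. Unwinding the isomorphism, your $s(y)$ is precisely the paper's second coordinate $u$ and your inverse-multiplication trick is the analogue of the paper's $z\in K_1(t\cdot w\inv)$, so the mechanisms are parallel; but your route is more self-contained, avoiding \thmref{principal groupoid} entirely (and, in the freeness step, needing only freeness of $G$ on $\XX$ applied to $t\cdot s(y)=s(y)$), at the cost of the bookkeeping with representatives, subnets, and the closedness of the set of composable pairs that makes $x_i\to w(t\cdot y)\inv$ legitimate. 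The paper's route buys shorter verifications once the Palais-type structure theorem is available, which it is in the appendix and is reused elsewhere; your route would be the natural one if one wanted this proposition independently of that machinery.
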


\begin{proof}
  By \lemref{quotient action}, the orbit groupoid $\XX/H$ acts on the
  left of the space $\XX$.  It is routine to check (i), and also that
  the actions of $G$ and $\XX/H$ on $\XX$ are covariant.  Then it
  follows from \lemref{semidirect action} that $\XX/H\rtimes G$ acts
  on $\XX$ as indicated.

  To verify that the action of $\XX/H\rtimes G$ is free and proper, it
  is easier if we replace $\XX$ by the homeomorphic space
  $\XX/H*\XX^{(0)}$, using \thmref{principal groupoid}.  Then the
  action becomes
  \begin{equation*}
    (z,t)\cdot (w,u)=\bigl(z(t\cdot w),t\cdot u)
    \quad\text{if}\quad s(z)=r(t\cdot w).
  \end{equation*}
  For the freeness, if $(z,t)\cdot (w,u)=(w,u)$, then $t\cdot u=u$, so
  $t=e$ since $G$ acts freely, and then $zw=w$, so $z=r(w)$.  Thus
  $(z,t)\in (\XX/H\rtimes G)^{(0)}$.

  For the properness, if $K\subset \XX$ is compact, we can find
  compact sets
  \begin{equation*}
    K_1\subset \XX/H\midtext{and}K_2\subset \XX^{(0)}
  \end{equation*}
  such that $K\subset K_1\times K_2$.  It suffices to find a compact
  set containing any $(z,t)\in \XX/H\rtimes G$ for which $(z,t)\cdot
  K\cap K\ne \emptyset$.  If $(w,u)\in K$ and $(z,t)\cdot (w,u)\in K$,
  then $t\cdot u\in K_2$, so because $G$ acts properly there is a
  compact set $L\subset G$ containing any such $t$, and then using
  $z(t\cdot w)\in K_1$ we get
  \begin{equation*}
    z\in K_1 (t\cdot w\inv) \subset K_1(L\cdot K_1\inv),
  \end{equation*}
  which is compact in $\XX/H$.  Therefore we conclude that
  \begin{equation*}
    (z,t)\in K_1(L\cdot K_1\inv)\times L,
  \end{equation*}
  which is compact in $\XX/H\rtimes G$.
\end{proof}

\begin{prop}
  \label{semidirect orbit bundle action}
  Let $p:\AA\to\XX$ be a Fell bundle over a locally compact groupoid,
  and let $G$ and $H$ be locally compact groups.  Suppose that $G$ and
  $H$ act freely and properly on the left and right, respectively, of
  $\AA$.  Then
  \begin{enumerate}
  \item $G$ acts on the left of the orbit bundle $\AA/H$ by
    \begin{equation*}
      s\cdot (a\cdot H)=(s\cdot a)\cdot H;
    \end{equation*}

  \item the semidirect-product Fell bundle $\AA/H\rtimes G$ acts on
    the left of the Banach bundle $\AA$ by
    \begin{equation}
      \label{semidirect orbit bundle act}
      (a\cdot H,t)\cdot b=a(t\cdot b)
      \righttext{whenever}s(a)=r(p(t\cdot b)).
    \end{equation}
  \end{enumerate}
\end{prop}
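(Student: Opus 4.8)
The plan is to follow the proof of \propref{semidirect orbit action} step by step, promoting each step from the groupoid $\XX$ to the Fell bundle $\AA$ by invoking the bundle-level tools already assembled in this appendix. By \corref{orbit bundle action}, the orbit Fell bundle $\AA/H$ already acts on the left of the Banach bundle $\AA$ via $(a\cdot H)\cdot b=ab$ whenever $s(a)=r(b)$, covering the action of the orbit groupoid $\XX/H$ on the space $\XX$ from \lemref{quotient action}. So what remains is to verify (i), to check that the actions of $G$ and $\AA/H$ on the Banach bundle $\AA$ are covariant in the sense of the definition preceding \corref{semidirect bundle action}, and then to quote that corollary.

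For (i), I would first observe that the formula $s\cdot(a\cdot H)=(s\cdot a)\cdot H$ is well-defined exactly because the $G$- and $H$-actions on $\AA$ commute: if $a'=a\cdot h$ with $h\in H$, then $s\cdot a'=(s\cdot a)\cdot h$, so $(s\cdot a')\cdot H=(s\cdot a)\cdot H$. Continuity follows from continuity of the $G$-action on $\AA$ together with openness of the quotient map $\AA\to\AA/H$, and the fact that each $a\mapsto s\cdot a$ is a Fell-bundle automorphism of $\AA$ descends to the quotient using freeness of the $H$-action, exactly as in \corref{orbit bundle}; hence $G$ acts on $\AA/H$ by Fell-bundle automorphisms, recovering on base groupoids the statement \propref{semidirect orbit action}(i).

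Covariance is then a short computation: for $a\cdot H\in\AA/H$, $b\in\AA$ with $s(a)=r(b)$, and $s\in G$, one has $s\cdot\bigl((a\cdot H)\cdot b\bigr)=s\cdot(ab)=(s\cdot a)(s\cdot b)$ since $G$ acts on $\AA$ by automorphisms, while $\bigl(s\cdot(a\cdot H)\bigr)\cdot(s\cdot b)=\bigl((s\cdot a)\cdot H\bigr)\cdot(s\cdot b)=(s\cdot a)(s\cdot b)$, the final product being defined because $s(s\cdot a)=s\cdot s(a)=s\cdot r(b)=r(s\cdot b)$. So the two agree. (This simultaneously records the covariance of the underlying groupoid actions.)

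Finally I would apply \corref{semidirect bundle action} to the Fell bundle $\AA/H\to\XX/H$ acting on the Banach bundle $\AA$, equipped with the $G$-action from (i): it yields that $\AA/H\rtimes G$ acts on the left of $\AA$ by $(a\cdot H,t)\cdot b=(a\cdot H)\cdot(t\cdot b)=a(t\cdot b)$, the defining condition being, once the fibring maps are unwound, $s(a)\cdot H=r(p(t\cdot b))\cdot H$; choosing $a$ within its $H$-orbit so that $s(a)=r(p(t\cdot b))$ puts it in the cleaned-up form \eqref{semidirect orbit bundle act}, just as in the Remark following \thmref{symmetric action}. Since (ii) claims nothing about freeness or properness of this Banach-bundle action, there is nothing further to prove; any such statement at the base level is already \propref{semidirect orbit action}(ii). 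The only delicate points are bookkeeping ones --- keeping track of which fibring map the Banach bundle $\AA$ carries, and noticing that well-definedness of (i) genuinely uses commutativity rather than freeness --- so the main obstacle is really just organizing the reductions to \corref{orbit bundle action} and \corref{semidirect bundle action} correctly, not any new idea.
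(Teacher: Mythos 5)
Your argument is exactly the paper's: invoke \corref{orbit bundle action} for the action of $\AA/H$ on the Banach bundle $\AA$, check (i) and the covariance of the $G$- and $\AA/H$-actions (which the paper declares routine and you usefully write out), and then apply \corref{semidirect bundle action} to obtain (ii), unwinding the fibring maps to reach the cleaned-up condition $s(a)=r(p(t\cdot b))$. So the proposal is correct and follows essentially the same route as the published proof.
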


\begin{proof}
  By \corref{orbit bundle action}, the orbit Fell bundle $\AA/H$ acts
  on the Banach bundle $\AA$.  By \corref{semidirect orbit action},
  $G$ acts on the orbit groupoid $\XX/H$, and it is routine to check
  (i), and also that the actions of $G$ and $\AA/H$ on $\AA$ are
  covariant.  Then (ii) follows from \corref{semidirect bundle
    action}.
\end{proof}

\end{appendix}

\bibliographystyle{amsplain}

\providecommand{\bysame}{\leavevmode\hbox to3em{\hrulefill}\thinspace}
\providecommand{\MR}{\relax\ifhmode\unskip\space\fi MR }
\providecommand{\MRhref}[2]{%
  \href{http://www.ams.org/mathscinet-getitem?mr=#1}{#2}
}
\providecommand{\href}[2]{#2}

\end{document}